\newtheorem{theorem}{Theorem}[section]
\newtheorem{proposition}[theorem]{Proposition}
\newtheorem{corollary}[theorem]{Corollary}
\newtheorem{lemma}[theorem]{Lemma}
\theoremstyle{definition}
\newtheorem{definition}[theorem]{Definition}
\newtheorem{example}[theorem]{Example}
\newcommand{\oline}[1]{\mathbin{\overline{#1}}}
\newcommand{\uline}[1]{\mathbin{\underline{#1}}}
\newcommand{\overeq}[2]{\overset{ {\rm #1}\ref{#2}}{=}}
\newcommand{\darrow}{\hspace{-0.8mm}\searrow\hspace{-0.8mm}}%{\hspace{0.8mm} *\hspace{-3.5mm}\searrow  \hspace{-0.8mm}}
\begin{document}

\title{Shadow biquandles and local biquandles}

\author[K.~Oshiro]{Kanako Oshiro}
\address{Department of Information and Communication Sciences, Sophia University, Tokyo 102-8554, Japan}
\email{oshirok@sophia.ac.jp}

\keywords{Shadow biquandles, local biquandles, homology groups, cohomology groups, cocycle invariants, links and surface-links}

\subjclass[2010]{57M27, 57M25}

\date{\today}

\maketitle

\begin{abstract}
Given a shadow biquandle $(B,X)$ composed of a biquandle $B$ and a strongly connected $B$-set $X$,  we have a local biquandle structure on $X$.  
The (co)homology groups of such shadow biquandles are isomorphic to those of the corresponding local biquandles. 
Moreover, cocycle invariants, of oriented links and oriented surface-links,  using such shadow biquandles coincide with those using the corresponding local biquandles.
These results imply that for some cases, the Niebrzydowski's theory in \cite{Niebrzydowski0, Niebrzydowski1, Niebrzydowski2} for knot-theoretic ternary quasigroups is the same as  shadow biquandle theory.
We also show that some local biquandle $2$- or $3$-cocycles and some $1$- or $2$-cocycles of the Niebrzydowski's (co)homology theory can be induced from Mochizuki's cocycles.

\end{abstract}

\section{Introduction}

In knot theory and related topics, quandles \cite{Joyce82, Matveev82}  are important algebraic systems, each of which equips a binary operation coming from Reidemeister moves of oriented link diagrams with arc labelings.  
Biquandles \cite{FennRourkeSanderson92, KR02} are a generalization of quandles and they are also important algebraic systems, each of which equips two binary operations coming from Reidemeister moves of oriented link diagrams with semi-arc labelings.  As other (or further) important generalizations, shadow quandle theory and shadow biquandle theory were introduced and well-studied, see \cite{CarterKamadaSaito01,  FennRourkeSandersonpreprint, KKKL} for example. Both of them are related to region labelings for oriented link diagrams in addition to arc or semi-arc labelings.  
In each of quandle theory, biquandle theory, shadow quandle theory and shadow biquandle theory, a (co)homology theory and a cocycle invariant of oriented links (or oriented surface-links, oriented virtual-links and so on) using a cocycle of the (co)homology theory are defined and well-studied.

In \cite{Niebrzydowski0, Niebrzydowski1, Niebrzydowski2}, Niebrzydowski studied an algebraic system, called a {\it knot-theoretic ternary quasigroup}, which equips a ternary operation coming from Reidemeister moves of oriented link diagrams with region labelings. He defined a (co)homology theory of the algebraic systems and a cocycle invariant of oriented links and oriented surface-links using a cocycle of his (co)homology theory, see also \cite{ChoiNeedellNelson17, KimNelson, NeedellNelson16}.
Note that the region labelings in this case are not related to arc or semi-arc labelings, while the region labelings in the cases of shadow quandles or shadow biquandles depend on the arc or semi-arc labelings.

In \cite{NOO}, local biquandle theory was introduced. A local biquandle is always given associated with a knot-theoretic (horizontal- or vertical-)ternary-quasigroup. 
Although a local biquandle is not a biquandle, it has a local algebraic structure that is similar to the algebraic structure of biquandles, that is, it has a local algebraic structure related to semi-arc labelings.
It was shown that the Niebrzydowski's (co)homology theory can be interpreted as local biquandle theory. 
On other words, in some sense, the Niebrzydowski's (co)homology theory can be interpreted similarly as biquandle (co)homology theory since local biquandle (co)homology theory is an analogy of biquandle (co)homology theory.
Furthermore, through the interpretation of the (co)homology theories, it was shown that the Niebrzydowski's cocycle invariants and the local biquandle cocycle invariants of oriented links and oriented surface-links are the same. This implies that in some sense, the Niebrzydowski's cocycle invariants can be also interpreted similarly as the biquandle cocycle invariants.

In this paper, we show that 
given a shadow biquandle $(B,X, \uline{*}, \oline{*}, *)$ with a strongly connected $B$-set $X$, we can define a knot-theoretic (horizontal-)ternary-quasigroup $(X, [\,])$  (see Theorem~\ref{thm:1}), and then, we have a local biquandle $(X, \{\star\}, \{\star\})$ associated with $(X, [\,])$.
The (co)homology groups of $(B,X, \uline{*}, \oline{*}, *)$ are isomorphic to those of the corresponding local biquandle $(X, \{\star\}, \{\star\})$ (see Theorem~\ref{thm:2}), and the shadow biquandle cocycle invariant using $(B,X, \uline{*}, \oline{*}, *)$ and a cocycle coincides with the local biquandle cocycle invariant using the corresponding local biquandle $(X, \{\star\}, \{\star\})$ and a corresponding cocycle (see Theorems~\ref{thm:3} and \ref{thm:4}).
Considering the main results in this paper together with the results shown in \cite{NOO}, we can say that for some cases, the Niebrzydowski's theory in \cite{Niebrzydowski0, Niebrzydowski1, Niebrzydowski2}  is the same as  shadow biquandle theory (see Corollaries~\ref{cor:1}, \ref{cor:2} and \ref{cor:3}). 
As a consequence, we show that some local biquandle $2$- or $3$-cocycles and some $1$- or $2$-cocycles of the Niebrzydowski's (co)homology theory can be induced from Mochizuki's cocycles in \cite{Mochizuki} (see Examples~\ref{ex:Mochizuki}).

The paper is organized as follows: 
In Section~2, we review the definitions of links, surface-links, biquandles, horizontal-tribrackets and local biquandles. 
In Section~3, we recall the definitions of shadow biquandle (co)homology groups, shadow biquandle cocycle invariants, local biquandle (co)homology groups and local biquandle cocycle invariants. 
The main results (Theorems~\ref{thm:1}, \ref{thm:2}, \ref{thm:3} and \ref{thm:4}) in this paper are stated and proven in Section~4. 
In Section~5, we give a relationship between the results in this paper and the Niebrzydowski's theory given in \cite{Niebrzydowski0, Niebrzydowski1, Niebrzydowski2}, and we show some local biquandle cocycles and some cocycles of the Niebrzydowski's (co)homology theory that are induced from Mochizuki's cocycles.

\section{Preliminaries}
\subsection{Links, surface-links,  connected diagrams}
A {\it knot} is an oriented $1$-dimensional sphere embedded in $\mathbb R^3$. A {\it link} is a disjoint union of knots.  We note that every knot is a link. 
Two links are said to be {\it equivalent} if they can be deformed into each other through an isotopy of $\mathbb R^3$.
A {\it diagram} of a link is its image by a regular projection, from $\mathbb R^3$ to $\mathbb R^2$, equipped with the height information for each double point.
It is known that two link diagrams represent the same link if and only if they are related by a finite sequence of Reidemeister moves.
A knot diagram is always connected.  
A link diagram with at least two components is said to be {\it connected} if every component intersects another  component.
It is known that between two connected  diagrams $D$ and $D'$ that represent the same link, there exists a finite sequence of connected diagrams and oriented Reidemeister moves that transforms $D$ to $D'$, i.e., there exists 
\[
D=D_0 \overset{R_0} {\longrightarrow} D_1 \overset{R_1} {\longrightarrow}\cdots \overset{R_{i-1}} {\longrightarrow}D_i \overset{R_i} {\longrightarrow} \cdots   \overset{R_{n-1}} {\longrightarrow} D_n =D',
\]  
where for each $i\in \{0,1,\ldots ,n\}$, $R_i$ is an oriented Reidemeister move, and $D_i$ is a connected diagram of a link.
For a diagram $D$, we remove a small neighborhood of each crossing, and then, we call each connected component a {\it semi-arc} of $D$.  
In this paper, for a link diagram $D$, $\mathcal{SA}(D)$ means the set of semi-arcs of $D$ and $\mathcal{R}(D)$ means  the set of connected regions of $\mathbb R^2\setminus D$. 
For a semi-arc $s$ of a link diagram $D$, we assign a normal vector $n_s$ to $s$ to satisfy that 
the pair $(o, n_s)$ of the orientation $o$ of $D$ and $n_s$ coincides with the right-handed orientation of $\mathbb R^2$, and thus, we represent the orientation of $D$.

A {\it surface-knot} is an oriented closed surface locally flatly embedded in $\mathbb R^4$. A {\it surface-link} is a disjoint union of surface-knots.  We note that every surface-knot is a surface-link. 
Two surface-links are said to be {\it equivalent} if they can be deformed into each other through an isotopy of $\mathbb R^4$.
A {\it diagram} of a surface-link is its image by a regular projection, from $\mathbb R^4$ to $\mathbb R^3$, equipped with the height information for each double point curve, where the height information is represented by removing small  neighborhoods of lower double point curves. Then a diagram is composed of  four kinds of local pictures depicted in Figure~\ref{multiplepoint}, and the indicated points are called a {\it regular point}, a {\it double point}, a {\it triple  point} and a {\it branch point}, respectively.  
It is known that two surface-link diagrams represent the same surface-link if and only if they are related by a finite sequence of Roseman moves, see \cite{Roseman} for details.
A surface-knot diagram is always connected.  
A surface-link diagram with at least two components is said to be {\it connected} if every component intersects another  component.
It is known that between two connected  diagrams $D$ and $D'$ that represent the same surface-link, there exists a finite sequence of connected diagrams and oriented Roseman moves that transforms $D$ to $D'$.
For a surface-link diagram $D$, we remove small  neighborhoods of double point curves, and then, we call each connected component a {\it semi-sheet} of $D$.  
In this paper, for a surface-link diagram $D$, $\mathcal{SS}(D)$ means the set of semi-sheets of $D$ and $\mathcal{R}(D)$ means  the set of connected regions of $\mathbb R^3 \setminus D$.
For a semi-sheet $s$ of a surface-link diagram $D$, we assign a normal vector $n_s$ to $s$ to satisfy that 
the triple $(o_1, o_2, n_s)$ of the orientation $(o_1, o_2)$ of $D$ and $n_s$ coincides with the right-handed orientation of $\mathbb R^3$, and thus, we represent the orientation of $D$.      
\begin{figure}[ht]
  \begin{center}
    \includegraphics[clip,width=10cm]{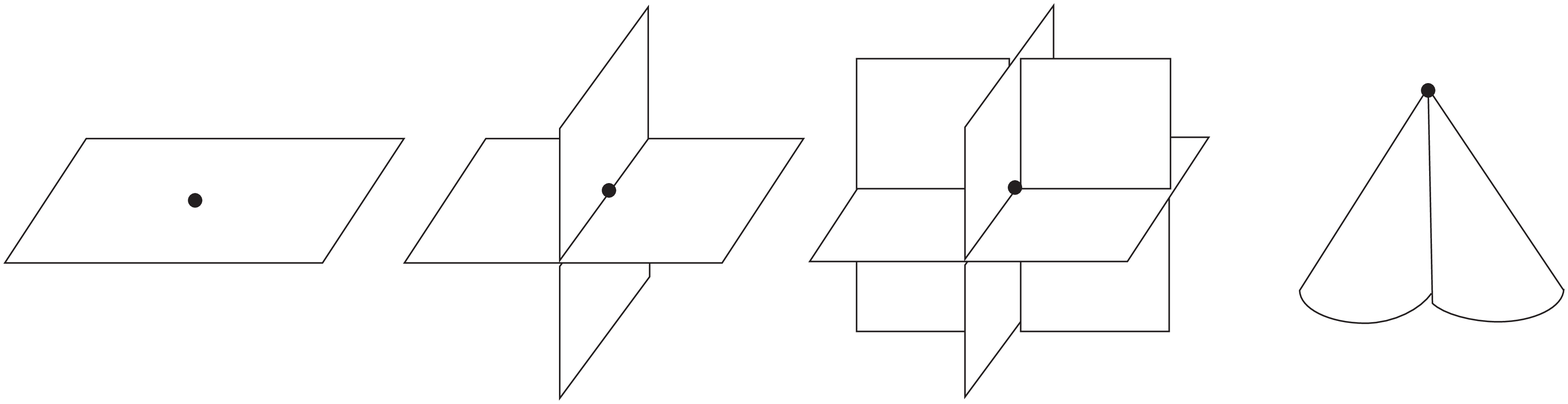}
    \caption{}
    \label{multiplepoint}
  \end{center}
\end{figure}

\subsection{Biquandles, tribrackets, local biquandles}

\begin{definition}\label{def:biquandle}(\cite{FennRourkeSanderson92, KR02})
A \textit{biquandle} is a  set $B$ equipped with binary operations $\uline{*},\oline{*}:B\times B\to B$ satisfying the following axioms.
\begin{itemize}
\item
For any $a\in B$, $a\uline{*}a=a\oline{*}a$.
\item
For any $b\in B$, the map $\uline{*}b:B\to B$ sending $a$ to $a\uline{*}b$ is bijective.
\item[]
For any $b\in B$, the map $\oline{*}b:B\to B$ sending $a$ to $a\oline{*}b$ is bijective.
\item[]
The map $S:B\times B\to B\times B$ defined by $S(a,b)=(b\oline{*}a,a\uline{*}b)$ is bijective.
\item
For any $a,b,c\in B$,
\begin{align*}
&(a\uline{*}b)\uline{*}(c\uline{*}b)=(a\uline{*}c)\uline{*}(b\oline{*}c), \\
&(a\uline{*}b)\oline{*}(c\uline{*}b)=(a\oline{*}c)\uline{*}(b\oline{*}c), \\
&(a\oline{*}b)\oline{*}(c\oline{*}b)=(a\oline{*}c)\oline{*}(b\uline{*}c).
\end{align*}
\end{itemize}
We denote it by $(B, \uline{*}, \oline{*})$ or by $B$ for short unless it causes confusion.
For $a,b\in B$, we denote by $a\uline{*}^{-1} b$ and $a\oline{*}^{-1} b$ the elements $(\uline{*} b)^{-1} (a)$ and $(\oline{*} b)^{-1} (a)$, respectively.
A biquandle $(B, \uline{*}, \oline{*})$ with $a\oline{*} b = a ~(\forall a,b \in B)$ is called a {\it quandle} \cite{Joyce82, Matveev82}, which is also denoted by $(B, \uline{*})$.
\end{definition}

\begin{definition}\label{def:B-set}
Let $(B, \uline{*}, \oline{*})$ be a biquandle. 

\begin{itemize}
\item[(1)] A {\it $B$-set} is a  set $X$ equipped with a map $*: X\times B \to X$ satisfying the following axiom:

\begin{itemize}
\item[$\bullet$]
For any $a \in B$, $(*a): X \to X$ sending  $x$  to $x*a$ is bijective.
\item[$\bullet$]
For any $a, b\in B$ and  $x\in X$, $(x*a)*(b\oline{*}a)=(x*b)*(a \uline{*}b)$.
\end{itemize}

\noindent We denote it by $(X,*)$  or by $X$ for short unless it causes confusion.
We denote by $x *^{-1} a$ the element $(*a)^{-1} (x)$ for $a\in B$ and $x\in X$.

\item[(2)]  A $B$-set $(X,*)$ is {\it strongly connected} if  it satisfies the following axiom:

\begin{itemize}
\item[$\bullet$] For any $x\in X$, the map $x*: B \to  X$ sending $a$ to $x*a$ is bijective.
%For any $x ,y \in X$, there exists a unique $a\in B$ such that $x *a = y$.
\end{itemize}

\noindent We denote  by $x \darrow y$ the element $(x*)^{-1}(y)$ for $x,y\in B$.

\end{itemize}
\end{definition}

\begin{definition}
A {\it shadow biquandle} is a pair of a biquandle $(B, \uline{*}, \oline{*})$ and a $B$-set $(X,*)$. We denote it by $(B,X,\uline{*}, \oline{*}, *)$ or by $(B,X)$ for short unless it causes confusion.
In particular, when $(B, \uline{*}, \oline{*})$ is a quandle, the shadow biquandle $(B,X,\uline{*}, \oline{*}, *)$ is also called a {\it shadow quandle} and denoted  by $(B, X, \uline{*}, *)$. 
\end{definition}
%
%\begin{remark}
%The axioms of a biquandle $(B, \uline{*}, \oline{*})$  are obtained from the oriented Reidemeister moves of link diagrams, which is observed when we consider an assignment of an element of $B$ to each semi-arc of a link diagram satisfying the condition depicted in Figure~\ref{}.
%See Figure~\ref{RmoveIII} for the correspondence between the Reidemeister move of type III and the axiom ($\mathcal{B}$3) of a biquandle.
%
%The axiom of a $B$-set $(X,*)$ is corresponding to an assignment of an element of $X$ to each region of a link diagram with a $B$-coloring.   
%\end{remark}

\begin{lemma}\label{lem:0}
Let $(B, X, \uline{*}, \oline{*}, *)$ be a shadow biquandle.
For any $x\in X$ and $a,b\in B$, we have
\begin{itemize}
\item[(1)] $\big(x*^{-1}  (a\uline{*} b)\big) *^{-1} b = \big(x*^{-1} (b\oline{*} a)\big) *^{-1} a$,
\item[(2)] $(x*^{-1}a) *  (b\oline{*}^{-1} a)  = (x*b)*^{-1} \big(a \uline{*} (b\oline{*}^{-1} a)\big)$,
\item[(3)] $(x*^{-1}b) *  (a \uline{*}^{-1} b)  = (x*a)*^{-1} \big(b \oline{*} (a\uline{*}^{-1} b)\big)$.
\end{itemize}
\end{lemma}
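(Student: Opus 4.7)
The plan is to deduce all three identities directly from the single $B$-set axiom
\[
(y*a)*(b\oline{*}a)=(y*b)*(a\uline{*}b),
\]
by choosing $y$ and the input pair of $B$-elements appropriately, and then rearranging using the bijectivity of the maps $*c : X\to X$.

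For (1), I would set $y:=\bigl(x*^{-1}(a\uline{*}b)\bigr)*^{-1}b$, so that $y*b = x*^{-1}(a\uline{*}b)$ and hence $(y*b)*(a\uline{*}b)=x$. The axiom then gives $(y*a)*(b\oline{*}a)=x$, whence $y*a = x*^{-1}(b\oline{*}a)$ and finally $y = \bigl(x*^{-1}(b\oline{*}a)\bigr)*^{-1}a$, which is exactly the right-hand side of (1).

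For (2), I would apply the axiom with $y := x*^{-1}a$, first input $a$ and second input $b\oline{*}^{-1}a$. Then $y*a=x$ and $(b\oline{*}^{-1}a)\oline{*}a = b$, so the left side of the axiom collapses to $x*b$; the right side becomes $\bigl((x*^{-1}a)*(b\oline{*}^{-1}a)\bigr)*\bigl(a\uline{*}(b\oline{*}^{-1}a)\bigr)$. Solving for the factor $(x*^{-1}a)*(b\oline{*}^{-1}a)$ by applying $*^{-1}\bigl(a\uline{*}(b\oline{*}^{-1}a)\bigr)$ to both sides yields (2). Identity (3) is obtained in the same way, applying the axiom with $y:=x*^{-1}b$, first input $a\uline{*}^{-1}b$ and second input $b$, using $(a\uline{*}^{-1}b)\uline{*}b = a$ and $y*b=x$ to simplify one side to $x*a$, then inverting on the right.

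There is no real obstacle here; each identity is a one-line reshuffling once the correct substitution is chosen, and the only subtlety is keeping track of which operation is being inverted. No use of strong connectedness is needed; only the bijectivity of each $*c$ and the single mixed axiom of a $B$-set are used.
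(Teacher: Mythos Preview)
Your proof is correct: each of the three identities is obtained by a single application of the $B$-set axiom $(y*a)*(b\oline{*}a)=(y*b)*(a\uline{*}b)$ with a judicious choice of $y$ and of the pair of $B$-inputs, followed by one or two inversions using the bijectivity of the maps $*c$. The paper itself leaves this lemma to the reader (offering only a diagrammatic hint via a figure), so your explicit algebraic argument is precisely the verification the paper omits, and there is no divergence in approach.
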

\begin{proof}
We leave the proof of this lemme to the reader, refer also to Figure~\ref{lem1}.
\begin{figure}[h]
  \begin{center}
    \includegraphics[clip,width=10cm]{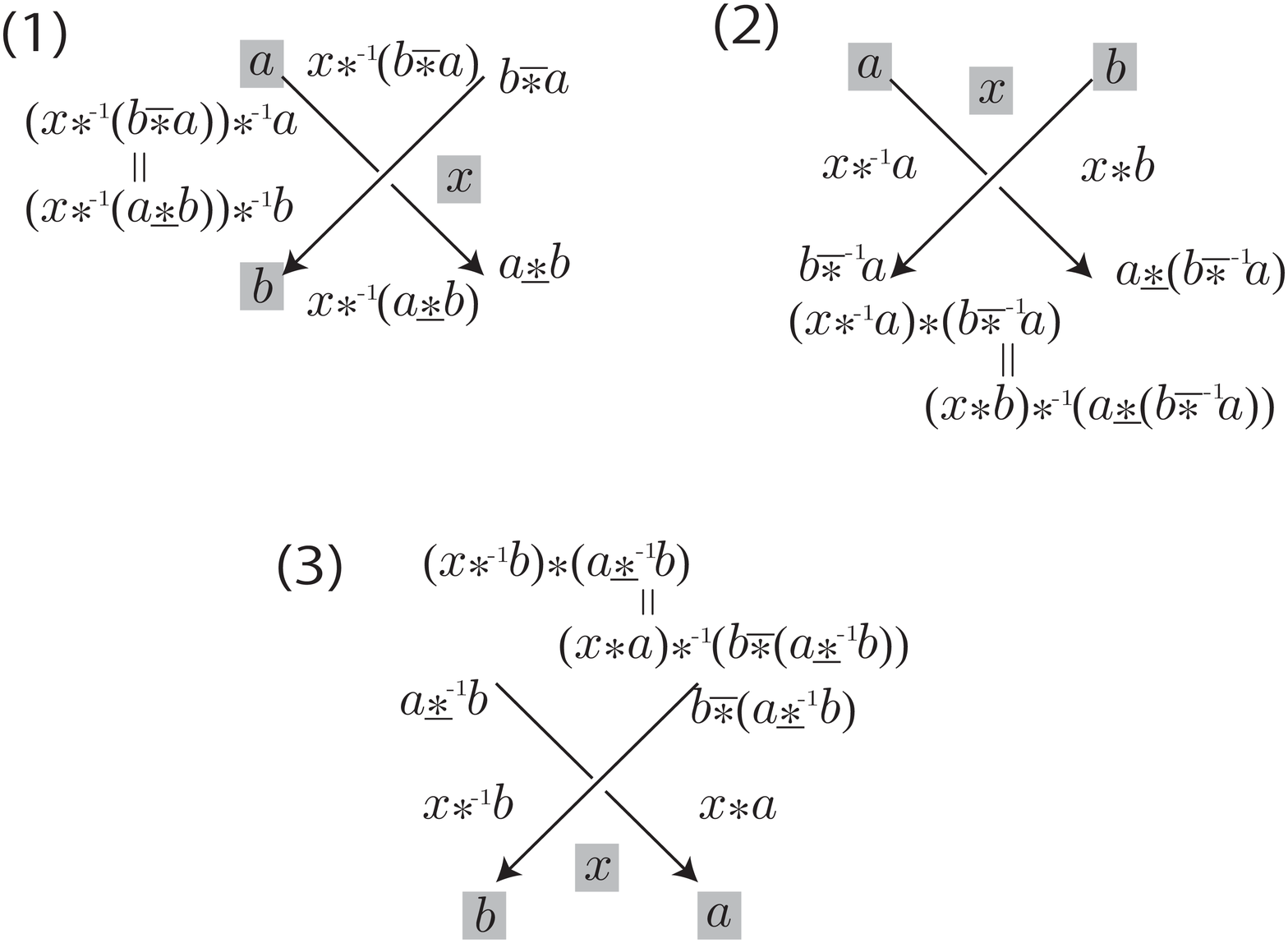}
    \caption{}
    \label{lem1}
  \end{center}
\end{figure}
\end{proof}
\begin{lemma}\label{lem:1}
Let $(B, X, \uline{*}, \oline{*}, *)$ be a shadow biquandle such that $X$ is strongly connected.
For $x,y\in X$ and $a,b \in B$, we have 
\begin{itemize}
\item[(1)]  $x*(x \darrow  y)=y$, 
\item[(2)]  $y*^{-1}(x \darrow  y)=x$, 
\item[(3)]  $(y*^{-1}a) \darrow  y=a$, 
\item[(4)]  $x \darrow  (x*a)=a$.
\end{itemize}
\end{lemma}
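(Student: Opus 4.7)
The plan is to unpack the definitions of the two ``inverse'' notations directly: $x \darrow y$ is by definition the unique element of $B$ that $x*$ sends to $y$ (well-defined because $X$ is strongly connected, i.e., $x* \colon B \to X$ is a bijection), and $*^{-1}a$ is by definition the inverse of the bijection $*a \colon X \to X$. All four assertions should be one-line observations once this is stated clearly.

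For (1), the equality $x*(x\darrow y)=y$ is literally the defining property of $x\darrow y=(x*)^{-1}(y)$, so nothing has to be proven. For (2), applying the bijection $*^{-1}(x\darrow y)$ to both sides of (1) yields $x=y*^{-1}(x\darrow y)$. For (3), note that $(y*^{-1}a)*a=y$ by the definition of $*^{-1}a$; since $(y*^{-1}a)*\colon B \to X$ is a bijection (strong connectedness applied to the element $y*^{-1}a$), the element $a$ is uniquely characterized by this equation, which is exactly to say $(y*^{-1}a)\darrow y=a$. For (4), the equation $x*a=x*a$ together with the injectivity of $x*\colon B\to X$ shows that $a$ is the unique element of $B$ with image $x*a$ under $x*$, i.e., $x\darrow(x*a)=a$.

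There is no genuine obstacle here; the statement is a formal consequence of the definitions, and the strong connectedness hypothesis is used only to guarantee that $x\darrow y$ makes sense (existence and uniqueness of the preimage) for every pair $x,y\in X$. The whole proof can be written in a few lines, and it is reasonable to present it as an immediate verification rather than as a sequence of separately argued items.
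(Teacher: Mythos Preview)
Your proof is correct and matches the paper's own treatment: the paper simply states that the lemma follows from the definitions (with a reference to a figure), which is exactly what you have spelled out line by line. There is no substantive difference in approach.
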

\begin{proof}
This lemma follows from the above definitions, refer also to Figure~\ref{lem2}.
\begin{figure}[h]
  \begin{center}
    \includegraphics[clip,width=8cm]{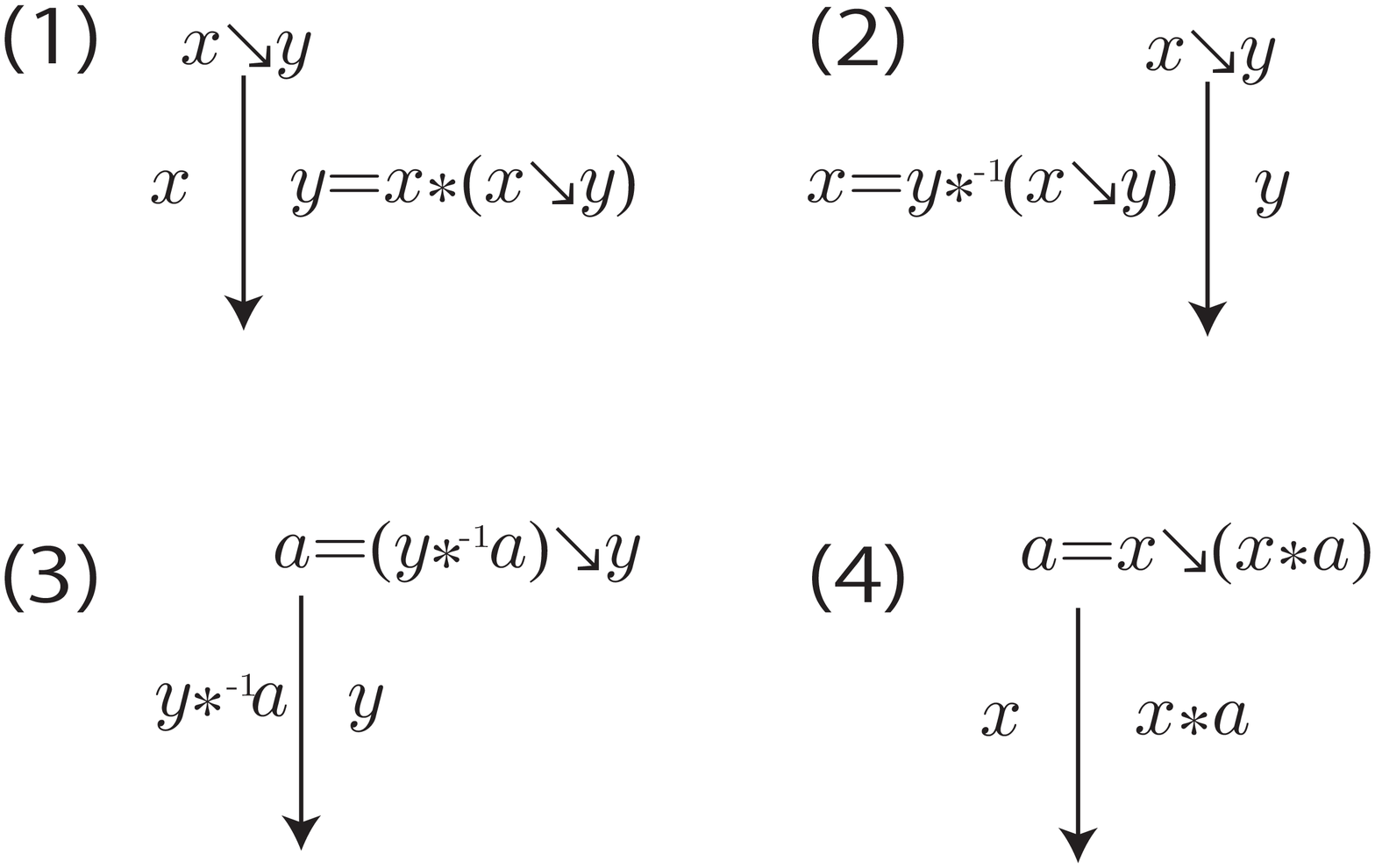}
    \caption{}
    \label{lem2}
  \end{center}
\end{figure}
\end{proof}

\begin{lemma}\label{lem:latin}
Let $(Q,\uline{*})$ be a latin quandle, that is, it satisfies that
\begin{itemize}
\item for any $a\in Q$, $a\uline{*}: Q \to Q$ is bijective.
\end{itemize}
Let $X=Q$ and let $*:  X\times Q \to Q$ be defined by  $*= \uline{*}$.
Then $(X, *)$ is a strongly connected $Q$-set.
\end{lemma}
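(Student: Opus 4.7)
The plan is a routine unpacking of definitions, using the observation that a quandle $(Q,\uline{*})$ is a biquandle with the trivial operation $a\oline{*}b=a$, and that the latin condition is precisely the extra bijectivity needed for strong connectivity.

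First I would verify that $(X,*)$ with $X=Q$ and $*=\uline{*}$ is a $Q$-set. The first $B$-set axiom asks that $*a\colon X\to X$, $x\mapsto x\uline{*}a$, is bijective; this is immediate from the quandle axiom that right translations by $a$ are bijective. For the second $B$-set axiom, I would use that in the biquandle $(Q,\uline{*},\oline{*})$ coming from a quandle we have $\oline{*}=\mathrm{id}$ in the sense that $b\oline{*}a=b$. The required identity
\[
(x*a)*(b\oline{*}a)=(x*b)*(a\uline{*}b)
\]
therefore becomes
\[
(x\uline{*}a)\uline{*}b=(x\uline{*}b)\uline{*}(a\uline{*}b),
\]
which is exactly the self-distributivity axiom of the quandle $(Q,\uline{*})$.

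Next I would verify strong connectivity. By definition I must show that for every $x\in X$ the map $x*\colon Q\to X$ sending $a$ to $x*a=x\uline{*}a$ is bijective. This is precisely the latin condition hypothesized on $(Q,\uline{*})$, so there is nothing further to prove.

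There is no real obstacle here: the statement is essentially an observation that the latin condition, combined with the usual quandle axioms, matches the $B$-set axioms and the strong connectivity condition exactly when $B=Q$, $X=Q$ and $*=\uline{*}$. The only point that deserves a brief remark is the reduction $b\oline{*}a=b$ coming from viewing a quandle as a biquandle, which turns the second $B$-set axiom into the quandle self-distributivity law.
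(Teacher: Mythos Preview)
Your argument is correct and is exactly the direct verification the paper has in mind; indeed, the paper itself leaves this proof to the reader, and what you wrote is precisely the routine check of the $B$-set axioms (using $b\oline{*}a=b$ to reduce the second axiom to quandle self-distributivity) together with the observation that the latin condition is literally the strong connectivity condition.
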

\begin{proof} This can be easily shown  by direct observation. We leave the proof of this lemma to the reader.
\end{proof}

In this paper, for a positive integer $n$, $\mathbb Z_n$ means the quotient ring  $\mathbb Z/ n\mathbb Z$, and $\mathbb Z_n [t^{\pm 1}]$ means the Laurent polynomial ring with coefficients in $\mathbb Z_n$.
\begin{example}\label{ex:1}
For a positive integer $n$, let $R_n=\mathbb Z_n$. 
We define $\uline{*}: R_n^2 \to R_n$ by 
$a \uline{*} b = 2b-a$, 
and then, $(R_n , \uline{*})$ is a quandle called the {\it dihedral quandle} of order $n$. 
Let $X=\mathbb Z_n$. Then $X$ is an $R_n$-set with $*: X \times R_n \to X$ defined by $*=\uline{*}$.

In particular when $n$ is an odd number other than $1$, since $R_n$ is latin, $X$ is strongly connected by Lemma~\ref{lem:latin}. We then have $x \darrow y = 2^{-1}(x+y)$ for $x,y \in X$.
\end{example}
\noindent The next example is a generalization of Example~\ref{ex:1}.
\begin{example}\label{ex:2}
For a positive integer $n$ and an ideal $J$ of $\mathbb Z_n [t^{\pm 1}]$, let $Q=\mathbb Z_n [t^{\pm 1}]/J$ be the quotient ring.
We define $\uline{*}: Q^2 \to Q$ by  $a \uline{*} b = ta+(1-t)b$, 
and then, $(Q, \uline{*})$ is a quandle called an {\it Alexander quandle}.
Let $X=Q$. Then $X$ is a $Q$-set with $*: X \times Q \to X$ defined by  $*=\uline{*}$.

In particular when $1-t$ is a unit in $\mathbb Z_n [t^{\pm 1}]/J$, since $Q$ is latin, $X$ is strongly connected by Lemma~\ref{lem:latin}. We then have $x \darrow y = (1-t)^{-1}(-tx+y)$ for $x,y \in X$.
\end{example}

\begin{definition}\label{def:horizontal} (cf. \cite{Niebrzydowski1, NOO})
 A {\it knot-theoretic horizontal-ternary-quasigroup}  is a pair of  a  set $X$  and a ternary operation $[\, ]: X^3 \to X; (x,y,z) \mapsto [x,y,z]$ satisfying the following property:
\begin{enumerate}
\item[] \hspace{-0.5cm}($\mathcal{H}$1)  
\begin{itemize}
\item[(i)] For any $x,y,w\in X$, there exists a unique $z\in X$ such that $[x,y,z]=w$, 
\item[(ii)] For any $x,z,w\in X$, there exists a unique $y\in X$ such that $[x,y,z]=w$, 
\item[(iii)] For any $y,z,w\in X$, there exists a unique $x\in X$ such that $[x,y,z]=w$.
\end{itemize}
\item[] \hspace{-0.5cm}($\mathcal{H}$2) For any $x,y,z,w \in X$, it holds that 
\[
\begin{array}{l}
[y,[x,y,z],[x,y,w]] = [z,[x,y,z],[x,z,w]]=[w,[x,y,w],[x,z,w]].
\end{array} 
\]
\end{enumerate}
We call the operation $[\,]$  a {\it horizontal-tribracket}.
\end{definition}

%\begin{remark}
%The axioms of a knot-theoretic horizontal-ternary-quasigroup $(X, [\,])$ are obtained from the oriented Reidemeister moves of link diagrams, which is observed when we consider an assignment of an element of $X$ to each region of a link diagram satisfying the condition depicted in Figure~\ref{coloring2}.
%See Figure~\ref{RmoveIII} for the correspondence between the Reidemeister move of type III and the axiom ($\mathcal{H}$2) of a knot-theoretic horizontal-ternary-quasigroup.
%\begin{figure}[t]
%  \begin{center}
%    \includegraphics[clip,width=7.0cm]{coloring2.eps}
%    \caption{}
%    \label{coloring2}
%  \end{center}
%\end{figure}
%\begin{figure}[h]
%  \begin{center}
%    \includegraphics[clip,width=6.0cm]{RmoveIII.eps}
%    \caption{}
%    \label{RmoveIII}
%  \end{center}
%\end{figure}
%\end{remark}

\begin{definition}(\cite{NOO})
Let $(X,[\,])$ be a knot-theoretic horizontal-ternary-quasigroup.
For each $x \in X$, we define two operations $\uline{\star}_x, \oline{\star}_x: (\{x\} \times X)^2 \to X^2$ by 
\[
\begin{array}{l}
(x,y) \uline{\star}_x (x,z) = (z, [x,y,z]), \mbox{ and }\\[5pt]
(x,y) \oline{\star}_x (x,z) = (z, [x,z,y]). 
\end{array}
\]
We call $(X, \{\uline{\star}_x\}_{x\in X}, \{\oline{\star}_x\}_{x\in X})$ the {\it local biquandle associated with $(X,[\, ])$}.
In this paper, for simplicity, we often omit the subscript by $x$ as $\uline{\star}=\uline{\star}_x$, $\oline{\star}=\oline{\star}_x$,  
$\{\uline{\star}\}=\{\uline{\star}_x\}_{x\in X} $,  and 
$\{\oline{\star}\}=\{\oline{\star}_x\}_{x\in X} $ unless it causes confusion. 
\end{definition}

The next two examples are related to Examples~\ref{ex:1} and \ref{ex:2}, respectively, which will be shown in Subsection~\ref{subsec:Mochizuki}.
\begin{example}\label{ex:3}
For a positive integer $n$, let $X=\mathbb Z_n$. We define a map $[\,]: X^3 \to X$ by 
\[
[x,y,z] = x-y+z,
\]  
and then, $[\,]$ is a horizontal-tribracket. We call it the {\it dihedral horizontal-tribracket} of order $n$.

The local biquandle $(X, \{\uline{\star}\}, \{\oline{\star}\})$ associated with this $(X, [\,])$
has the operations $\uline{\star}, \oline{\star}: X^2 \to X$ defined by 
\[
\begin{array}{l}
(x,y) \uline{\star} (x,z) = (y, x-y+z), \mbox{ and }\\[5pt]
(x,y) \oline{\star} (x,z) = (z,x+y-z). 
\end{array}
\]
\end{example}

\begin{example}\label{ex:4}
For a positive integer $n$ and an ideal $J$ of $\mathbb Z_n [t^{\pm 1}]$, let $X=\mathbb Z_n [t^{\pm 1}]/J$ be the quotient ring.
We define a map $[\,]: X^3 \to X$ by 
\[
[x,y,z] = -tx + ty + z,
\]  
and then, $[\,]$ is a horizontal-tribracket. 
We call it an {\it Alexander horizontal-tribracket}.

The local biquandle $(X, \{\uline{\star}\}, \{\oline{\star}\})$ associated with this $(X, [\,])$
has the operations $\uline{\star}, \oline{\star}: X^2 \to X$ defined by 
\[
\begin{array}{l}
(x,y) \uline{\star} (x,z) = (z, -tx +ty + z), \mbox{ and }\\[5pt]
(x,y) \oline{\star} (x,z) = (z, -tx  + y +tz). 
\end{array}
\]

\end{example}

\section{Local biquandle homology groups/cocycle invariants and shadow biquandle homology groups/cocycle invariants}

%%%%%%%%%%%%%%%%%%%%%%%%%%%%
\subsection{Shadow biquandle homology groups}
 Let $(B, X, \uline{*}, \oline{*}, *)$ be a shadow biquandle.

Let $C_n^{\rm sb}(B, X)$ be the free $\mathbb Z$-module generated by the elements of $X\times B^{n}$
if $n\geq 1$, and $C_n^{\rm sb}(B, X)=0$ otherwise.
We define a homomorphism $\partial_n^{\rm sb} : C_n^{\rm sb}(B, X) \to C_{n-1}^{\rm sb}(B, X)$ by 
\begin{align}
&\partial_n^{\rm sb} \big( (x, a_1, \ldots , a_n) \big)  \notag \\
&= \sum_{i=1}^{n} (-1)^i \Big\{ \big( x, a_1, \ldots , a_{i-1}, a_{i+1}, \ldots , a_n \big)\notag  \\
&\hspace{2cm}- \big( x * a_i, a_1 \uline{*} a_i, \ldots , a_{i-1} \uline{*} a_i , a_{i+1}\oline{*} a_i, \ldots , a_n \oline{*} a_i \big) \Big\} \notag 
\end{align}
if $n> 0$, and $\partial_n^{\rm sb}=0$ otherwise.
 Then 
$C_*^{\rm sb}(B, X)=\{C_n^{\rm sb}(B, X), \partial_n^{\rm sb}\}_{n\in \mathbb Z}$ is a chain complex.
Let $D_n^{\rm sb}(B, X)$ be a submodule of $C_n^{\rm sb}(B, X)$ that is generated by the elements of 
\[
\Big\{\big( x,a_1, \ldots , a_n \big) \in X\times B^n  ~\Big|~ \mbox{ $a_i =a_{i+1}$ for some $i\in \{1, \ldots , n-1 \} $  }  \Big\}.
\]
Then
$D_*^{\rm sb}(B, X)=\{D_n^{\rm sb}(B, X), \partial_n^{\rm sb}\}_{n\in \mathbb Z}$ is a subchain complex of $C_*^{\rm sb}(B, X)$. 
Therefore the chain complex $$C_*^{\rm SB} (B, X)=\{C_n^{\rm SB}(B, X):=C_n^{\rm sb}(B, X)/D_n^{\rm sb}(B, X), \partial_n^{\rm SB}:=\partial_n^{\rm sb}\}_{n\in \mathbb Z}$$ is induced.
We call the homology group $H_n^{\rm SB} (B, X)$ of $C_*^{\rm SB} (B, X)$ the \textit{$n$th shadow biquandle homology group} of  $(B, X)$.

For an abelian group $A$, we define the chain and cochain complexes by 
\[
\begin{array}{l}
C_n^{\rm SB}(B, X; A)=C_n^{\rm SB}(B, X) \otimes A, \quad \partial_n^{\rm SB} \otimes {\rm id}  \mbox{ and }\\[5pt]
C_{\rm SB}^n(B, X; A) ={\rm Hom}(C_n^{\rm SB}(B, X); A), \quad \delta^n_{\rm SB} \mbox{ s.t. }\delta^n_{\rm SB}(f)=f \circ \partial_{n+1}^{\rm SB}.
\end{array}
\]
Let $C_\ast^{\rm SB}(B, X; A)=\{C_n^{\rm SB}(B, X; A), \partial_n^{\rm SB}\otimes {\rm id}\}_{n\in \mathbb Z}$ and $C_{\rm SB}^\ast(B, X; A)=\{C_{\rm SB}^n(B, X; A), \delta^n_{\rm SB}\}_{n\in \mathbb Z}$. The \textit{nth homology group} $H_n^{\rm SB}(B, X; A)$ \textit{and nth cohomology group} $H^n_{\rm SB}(B, X; A)$ of $(B, X ) $  with coefficient group $A$ are defined by
\[
H_n^{\rm SB}(B, X; A)=H_n(C_\ast^{\rm SB}(B, X; A)) \qquad {\rm and} \qquad H_{\rm SB}^n(B, X; A)=H^n(C^\ast_{\rm SB}(B, X; A)).
\]
The {\it $n$th cocycle group} with coefficient group $A$  is denoted by $Z^n_{\rm SB}(B, X; A)$. 
Note that we omit the coefficient group  $A$ if $A=\mathbb Z$ as usual.

\subsection{Shadow biquandle colorings of link diagrams and cocycle invariants}\label{subsection:linkinvariant}
 Let $(B, X, \uline{*}, \oline{*}, *)$ be a shadow biquandle.
Let $D$ be a  diagram of a link $L$.
\begin{definition}
A \textit{ (biquandle) $B$-coloring} of $D$ is a map $C: \mathcal{SA}(D)  \to  B$ satisfying the following condition:
\begin{itemize}
\item For a crossing composed of under-semi-arcs $u_1, u_2$ and over-semi-arcs $o_1, o_2$ as depicted in Figure~\ref{coloring6}, 
\begin{itemize}
\item $C(u_2) = C(u_1) \uline{*} C(o_1)$, and 
\item $C(o_2) = C(o_1) \oline{*} C(u_1)$
\end{itemize}
hold, see also Figure~\ref{coloring9}.
\end{itemize}
\end{definition}

\begin{definition}\label{def:coloringshadow}
A \textit{ (shadow biquandle) $(B,X)$-coloring} of $D$ is a map $C: \mathcal{SA}(D) \cup \mathcal{R}(D)  \to  B \cup X$ satisfying the following condition:
\begin{itemize}
\item The restriction $C|_{\mathcal{SA}(D)}$ is a $B$-coloring of $D$.
\item $C(\mathcal{R}(D))\subset X$.
\item For a semi-arc $s$ whose normal vector points from a region $r_1$ to a region $r_2$  as depicted in Figure~\ref{coloring6},  $C(r_1) *
C(s) = C(r_2)$ holds, see also Figure~\ref{coloring9}.
\end{itemize}
We denote by ${\rm Col}_{(B,X)}^{\rm SB} (D) $ the set of $(B,X)$-colorings of $D$. 
\begin{figure}[h]
  \begin{center}
    \includegraphics[clip,width=10.0cm]{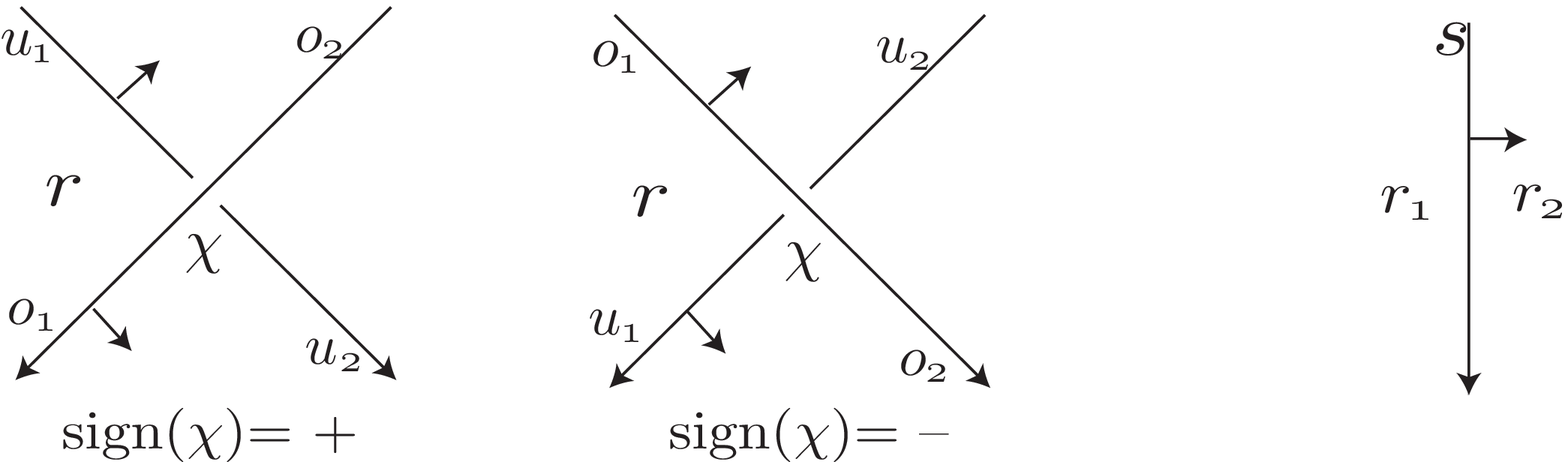}
    \caption{}
    \label{coloring6}
  \end{center}
\end{figure}
\begin{figure}[h]
  \begin{center}
    \includegraphics[clip,width=10.0cm]{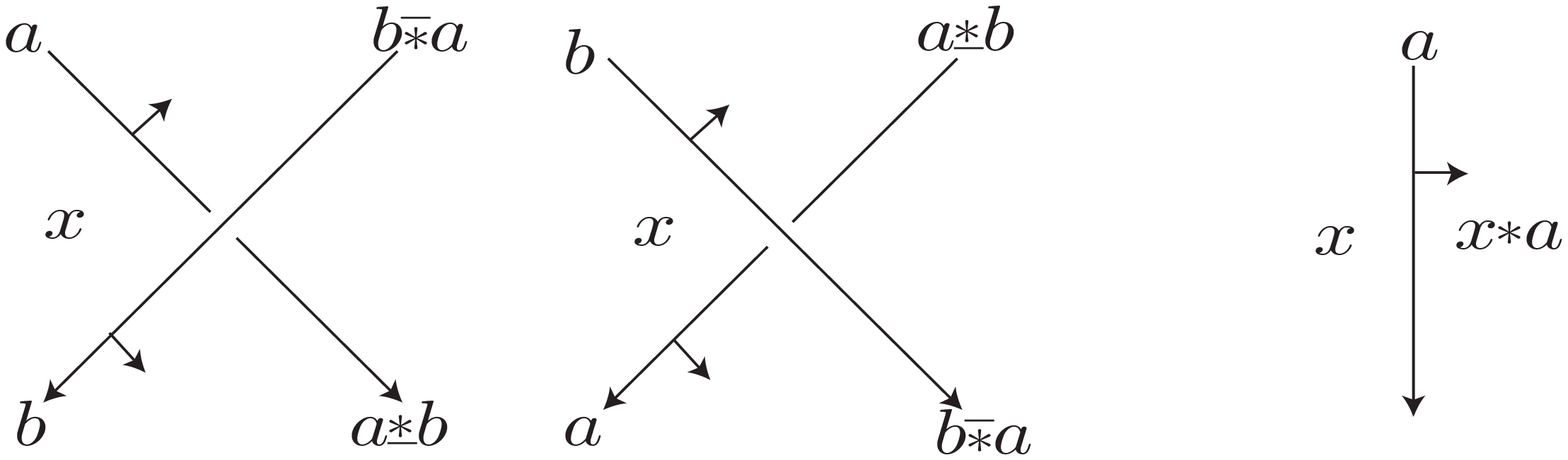}
    \caption{}
    \label{coloring9}
  \end{center}
\end{figure}
\end{definition}

\begin{proposition}\label{prop:coloring1}{\rm (cf. \cite{KKKL})}
Let $D$ and $D'$ be connected diagrams of links. 
If $D$ and $D'$ represent the same link, then there exists a bijection between  
${\rm Col}_{(B,X)}^{\rm SB} (D) $ and ${\rm Col}_{(B,X)}^{\rm SB} (D')  $.
\end{proposition}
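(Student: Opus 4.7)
The plan is to invoke the fact, recalled in Section~2.1, that if $D$ and $D'$ are connected diagrams representing the same link, then they are related by a finite sequence
\[
D=D_0 \overset{R_0}{\longrightarrow} D_1 \overset{R_1}{\longrightarrow} \cdots \overset{R_{n-1}}{\longrightarrow} D_n = D'
\]
of connected diagrams, where each $R_i$ is an oriented Reidemeister move. It therefore suffices to construct, for each oriented Reidemeister move $R_i$ between two connected diagrams $D_i$ and $D_{i+1}$, a bijection $\Phi_i : {\rm Col}_{(B,X)}^{\rm SB}(D_i) \to {\rm Col}_{(B,X)}^{\rm SB}(D_{i+1})$; composing these (and their inverses where appropriate) yields the desired bijection.

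To construct $\Phi_i$, I would fix a small disk $\Delta$ in which the move $R_i$ takes place and, given $C \in {\rm Col}_{(B,X)}^{\rm SB}(D_i)$, keep the colors on all semi-arcs and regions lying outside $\Delta$ unchanged. The colors on the semi-arcs and regions meeting $\Delta$ in $D_{i+1}$ are then forced by the coloring conditions at the crossings of $D_{i+1} \cap \Delta$, starting from the colors on the boundary $\partial \Delta$. The content of the verification is that these forced colors (a) exist, (b) are unique, and (c) do indeed satisfy the coloring conditions everywhere in $D_{i+1} \cap \Delta$.

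For the $B$-coloring part (the restriction $C|_{\mathcal{SA}}$), these facts are the standard biquandle invariance: R1 uses $a\uline{*}a=a\oline{*}a$; R2 uses bijectivity of $\uline{*}b$, $\oline{*}b$, and of the sideways map $S$; and R3 uses the three biquandle identities in Definition~\ref{def:biquandle}. For the shadow part, the $B$-set axioms provide the analogous statements on region labels: bijectivity of $*a: X\to X$ handles the R1 and R2 cases (the region label propagates uniquely across a semi-arc crossed in either direction, and cancellations $(x*a)*^{-1}a=x$ produce the needed matching), while the compatibility $(x*a)*(b\oline{*}a)=(x*b)*(a\uline{*}b)$, together with the identities collected in Lemma~\ref{lem:0}, handles R3 by guaranteeing that the four outer region labels around the triple-crossing region are consistent with both sides of the move. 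Invertibility of $\Phi_i$ is obtained by running the same argument for the inverse move $R_i^{-1}$.

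The main obstacle is the sheer number of oriented sub-cases (each Reidemeister move has several oriented variants, and each crossing has a choice of over/under strand), so the proof is a careful diagram-by-diagram bookkeeping exercise rather than a conceptual difficulty. The use of connectedness is essential only at the level of Section~2: it ensures that every intermediate diagram $D_i$ remains connected, so that no region label is lost or created ambiguously as one follows the sequence of moves.
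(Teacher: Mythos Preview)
The paper does not actually give its own proof of this proposition: it is stated with the citation ``(cf.\ \cite{KKKL})'' and no argument is supplied. Your proposal is precisely the standard argument one finds in that reference (and in the quandle/biquandle literature more generally): reduce to a single Reidemeister move between connected diagrams via the fact recalled in Section~2.1, then check locally that the biquandle axioms force a unique extension of the $B$-coloring across the move, and that the $B$-set axioms (together with the identities of Lemma~\ref{lem:0}) force a unique extension of the region labels. So your approach is correct and matches what the cited reference does.
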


Next, we show how to obtain a cocycle invariant by using the  $(B,X)$-colorings of a  diagram.

Let $C$ be a  $(B,X)$-coloring of $D$.
We define the local chain 
$w^{\rm SB}(D, C; \chi) \in C^{\rm SB}_2 (B, X)$ at each crossing $\chi$ by 
$$w^{\rm SB}(D, C; \chi) ={\rm sign}(\chi) \big(x,a,b \big)$$ when  $C(r)=x$, $C(u_1)=a$ and $C(o_1)=b$, where $r$, $u_1$ and $o_1$ are the region, under-semi-arc and over-semi-arc of $\chi$  as depicted in Figure~\ref{coloring6}, see also Figure~\ref{coloring9}..
We define a chain by 
$$\displaystyle W^{\rm SB}(D, C)=\sum_{\chi \in \{\mbox{\small crossings of $D$}\}} w^{\rm SB}(D, C; \chi) \in C^{\rm SB}_2 (B, X).$$

Let $A$ be an abelian group. For a $2$-cocycle $\theta \in C^2_{\rm SB}(B, X; A)$, we define
\[
\begin{array}{l}
\mathcal{H}^{\rm SB}(D)=\{[W^{\rm SB}(D, C)] \in H^{\rm SB}_2(B, X) \ | \ C \in {\rm Col}_{(B,X)}^{\rm SB} (D) \}, and \\[5pt]
\Phi_{\theta}^{\rm SB}(D)=\{\theta(W^{\rm SB}(D, C)) \in A \ | \ C \in {\rm Col}_{(B,X)}^{\rm SB} (D) \}
\end{array}
\]
as multisets. Then we have the following theorem:
\begin{theorem}{\rm (cf. \cite{KKKL})}
$\mathcal{H}^{\rm SB}(D)$ and $\Phi_{\theta}^{\rm SB}(D)$ are invariants of $L$.
\end{theorem}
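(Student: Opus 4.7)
The plan is to reduce both invariance statements to the following local fact: for every oriented Reidemeister move relating two connected link diagrams $D, D'$ and for the pair of $(B,X)$-colorings $C, C'$ matched by the bijection of Proposition~\ref{prop:coloring1}, one has $[W^{\rm SB}(D,C)] = [W^{\rm SB}(D',C')]$ in $H_2^{\rm SB}(B,X)$. Invariance of the multiset $\mathcal{H}^{\rm SB}(D)$ is then immediate, and invariance of $\Phi_\theta^{\rm SB}(D)$ follows because a $2$-cocycle $\theta \in Z_{\rm SB}^2(B,X;A)$ induces a well-defined evaluation on $H_2^{\rm SB}(B,X;A)$. Since the statement recalled in Section~2.1 guarantees that any two connected diagrams of a link are connected by a sequence of oriented Reidemeister moves passing through connected diagrams, this local verification suffices.

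I would then treat each oriented Reidemeister move in turn. For an RI move, the single crossing that is created or destroyed has two meeting semi-arcs carrying the same $B$-label $a$, so that crossing's contribution to $W^{\rm SB}$ is of the form $\pm(x,a,a) \in D_2^{\rm sb}(B,X)$, which vanishes in the quotient $C_2^{\rm SB}(B,X) = C_2^{\rm sb}(B,X)/D_2^{\rm sb}(B,X)$. For an RII move, the two new crossings have opposite signs; the biquandle axioms of Definition~\ref{def:biquandle} together with the $B$-set axiom of Definition~\ref{def:B-set} force the two labeled triples in $X \times B^2$ to coincide, so the signed contributions cancel exactly in $C_2^{\rm SB}(B,X)$. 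In both cases $W^{\rm SB}(D,C) = W^{\rm SB}(D',C')$ holds already in $C_2^{\rm SB}(B,X)$, not merely up to homology.

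The decisive case is the oriented RIII move. Labelling the three strands entering the move by $a,b,c \in B$ and the innermost region by $x \in X$, one reads off the six crossing contributions on the two sides of the move. Using the biquandle axioms to compute the $B$-labels on the internal semi-arcs and the $B$-set axiom to compute the $X$-labels on the inner regions on each side, the direct comparison with the explicit formula
\[
\partial_3^{\rm sb}(x,a,b,c) = -(x,b,c) + (x*a, b\oline{*}a, c\oline{*}a) + (x,a,c) - (x*b, a\uline{*}b, c\oline{*}b) - (x,a,b) + (x*c, a\uline{*}c, b\uline{*}c)
\]
shows that the signed difference $W^{\rm SB}(D,C) - W^{\rm SB}(D',C')$ equals $\pm\,\partial_3^{\rm sb}(x,a,b,c)$ in $C_2^{\rm SB}(B,X)$. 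Hence this difference is a boundary, so it represents $0$ in $H_2^{\rm SB}(B,X)$, and for any $2$-cocycle $\theta$ one obtains $\theta(W^{\rm SB}(D,C)) = \theta(W^{\rm SB}(D',C'))$ after applying $\delta^2_{\rm SB}\theta = 0$.

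The main obstacle is the sign and label bookkeeping in the RIII case: there are several oriented variants of the move (determined by the ordering of under/over at each of the three crossings and by the orientations of the strands), and for each variant one must correctly identify the six crossing signs together with the twelve relevant labels before matching the six summands to the six terms of $\partial_3^{\rm sb}(x,a,b,c)$. Once a consistent convention for the move is fixed, the shadow biquandle axioms are tailored precisely so that this matching succeeds, and no algebraic ingredient beyond Definitions~\ref{def:biquandle} and \ref{def:B-set} is needed.
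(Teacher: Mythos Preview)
The paper does not supply its own proof of this theorem; it is stated with the citation ``cf.~\cite{KKKL}'' and no argument is given. Your proposal is the standard verification that underlies such results: checking that each oriented Reidemeister move changes $W^{\rm SB}(D,C)$ by $0$ (for RI and RII, using the degeneracy quotient and sign cancellation respectively) or by a boundary $\pm\partial_3^{\rm SB}(x,a,b,c)$ (for RIII), so that the homology class is preserved and any $2$-cocycle evaluates identically. This is correct and is exactly the argument one expects the cited reference to carry out; there is nothing to compare against in the present paper itself.
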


\subsection{Shadow biquandle colorings of surface-link diagrams, cocycle invariants}

 Let $(B, X, \uline{*}, \oline{*}, *)$ be a shadow biquandle.
Let $D$ be a  diagram of a surface-link $F$.
\begin{definition}
A \textit{ (biquandle) $B$-coloring} of $D$ is a map $C: \mathcal{SS}(D)  \to  B$ satisfying the following condition:
\begin{itemize}
\item For a double point curve composed of under-semi-sheets $u_1, u_2$ and over-semi-sheets $o_1, o_2$ as depicted in Figure~\ref{doublepointshadow}, 
\begin{itemize}
\item $C(u_2) = C(u_1) \uline{*} C(o_1)$, and 
\item $C(o_2) = C(o_1) \oline{*} C(u_1)$
\end{itemize}
hold, see also Figure~\ref{doublepointshadow2}.
\end{itemize}
\end{definition}

\begin{definition}\label{def:coloringshadow}
A \textit{ (shadow biquandle) $(B,X)$-coloring} of $D$ is a map $C: \mathcal{SS}(D) \cup \mathcal{R}(D)  \to  B \cup X$ satisfying the following condition:
\begin{itemize}
\item The restriction $C|_{\mathcal{SS}(D)}$ is a $B$-coloring of $D$.
\item $C(\mathcal{R}(D))\subset X$.
\item For a semi-sheet $s$ whose normal vector points from a region $r_1$ to a region $r_2$  as depicted in Figure~\ref{doublepointshadow},  $C(r_1) *
C(s) = C(r_2)$ holds, see also Figure~\ref{doublepointshadow2}.
\end{itemize}
We denote by ${\rm Col}_{(B,X)}^{\rm SB} (D) $ the set of $(B,X)$-colorings of $D$. 
\begin{figure}[ht]
  \begin{center}
    \includegraphics[clip,width=8.0cm]{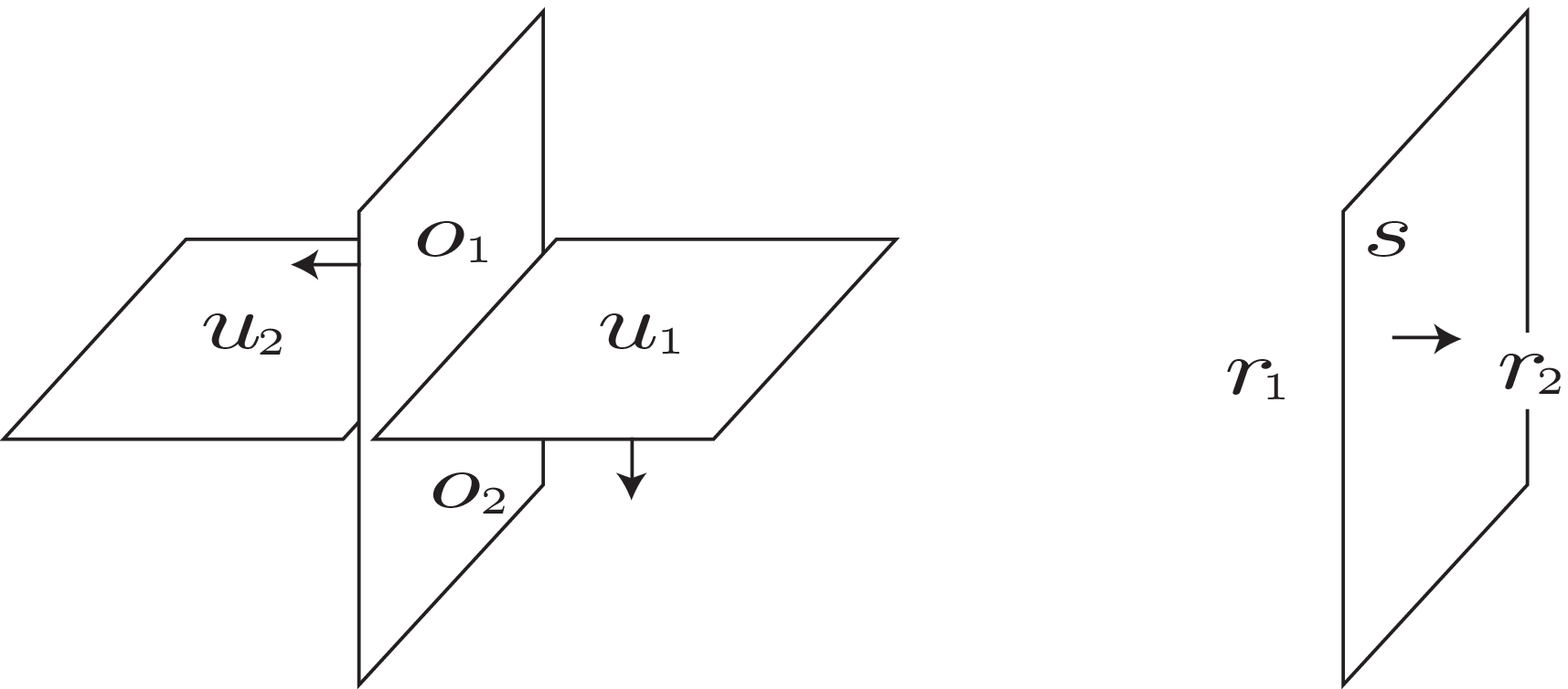}
    \caption{}
    \label{doublepointshadow}
  \end{center}
\end{figure}
\begin{figure}[ht]
  \begin{center}
    \includegraphics[clip,width=8.0cm]{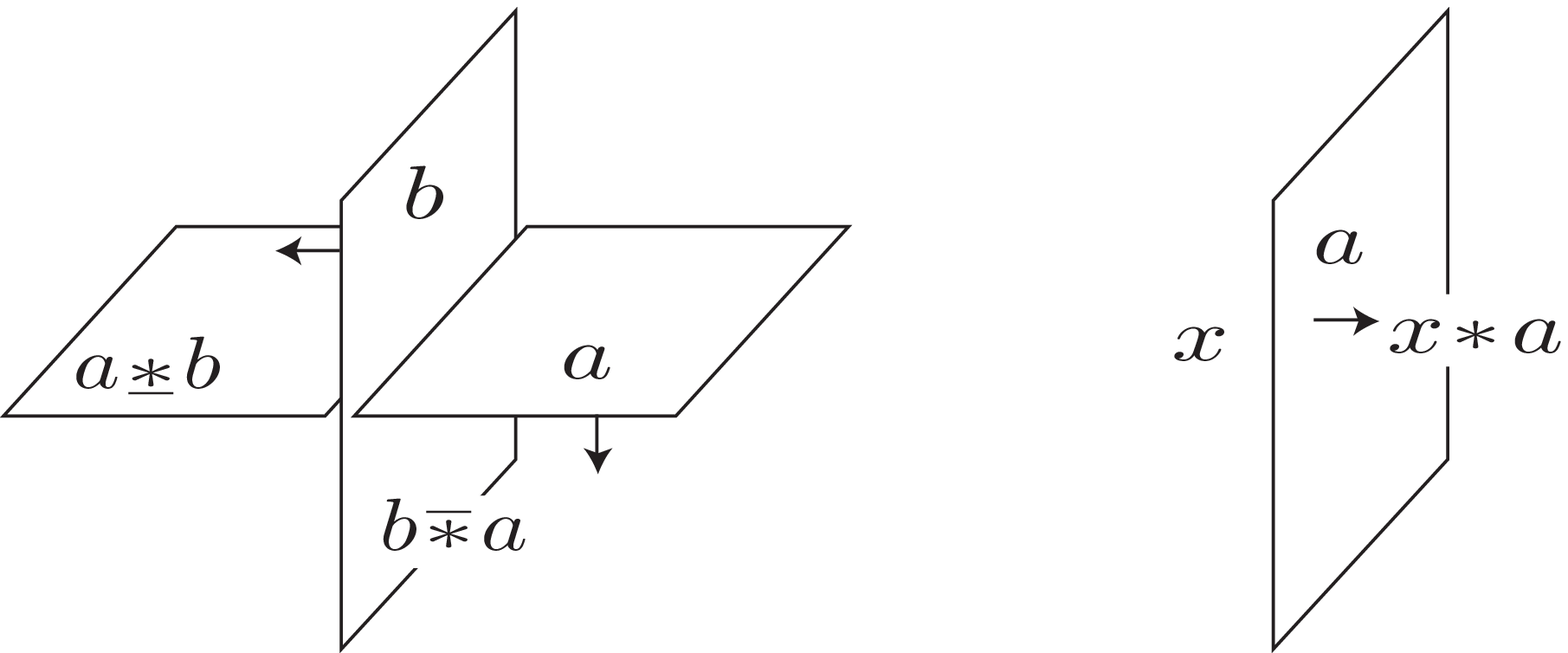}
    \caption{}
    \label{doublepointshadow2}
  \end{center}
\end{figure}
\end{definition}

\begin{proposition}\label{prop:coloring1}{\rm (cf. \cite{KKKL})}
Let $D$ and $D'$ be connected diagrams of surface-links. 
If $D$ and $D'$ represent the same surface-link, then there exists a bijection between  
${\rm Col}_{(B,X)}^{\rm SB} (D) $ and ${\rm Col}_{(B,X)}^{\rm SB} (D')  $.
\end{proposition}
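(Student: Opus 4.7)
The plan is to invoke the fact recalled at the end of Subsection~2.1 that any two connected diagrams $D$ and $D'$ of the same surface-link are related by a finite sequence $D=D_0 \to D_1 \to \cdots \to D_n = D'$ in which every intermediate $D_i$ is connected and every transition $D_i \to D_{i+1}$ is an oriented Roseman move. It therefore suffices, for each such move between connected diagrams, to construct a bijection $\Psi_i \colon {\rm Col}_{(B,X)}^{\rm SB}(D_i) \to {\rm Col}_{(B,X)}^{\rm SB}(D_{i+1})$; composing the $\Psi_i$ yields the desired bijection. Outside the small $3$-ball $B_i \subset \mathbb R^3$ supporting the move, $D_i$ and $D_{i+1}$ coincide as stratified sets, so a coloring $C$ of $D_i$ automatically induces labels on the semi-sheets and regions of $D_{i+1}$ outside $B_i$, and the task reduces to showing that this boundary data extends uniquely to a $(B,X)$-coloring inside $B_i$.

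For the $B$-labels on semi-sheets, this existence-and-uniqueness check is exactly the standard invariance of biquandle colorings under Roseman moves: the three mixed distributivity identities of Definition~\ref{def:biquandle} cover the triple-point and double-point type moves, the bijectivity of $\uline{*}b$, $\oline{*}b$ and of the sideways map $S$ cover the move involving a pair of double-point curves passing through each other, and the idempotency $a\uline{*}a=a\oline{*}a$ covers the branch-point moves. For the $X$-labels on regions, the coloring rule $C(r_2)=C(r_1)*C(s)$ together with the bijectivity of $*a$ shows that, once $C$ is fixed on one region and on the adjacent semi-sheets, the labels of all neighboring regions are determined; since $D_{i+1}$ is connected, this propagates to a global assignment, provided the local consistency condition inside $B_i$ holds. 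The only nontrivial such local consistency appears at a triple point and is precisely the identity $(x*a)*(b\oline{*}a)=(x*b)*(a\uline{*}b)$ from Definition~\ref{def:B-set}(1); double-point and branch-point moves contribute no new region relations beyond the bijectivity of $*a$.

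The main obstacle will be the combinatorial bookkeeping for the Roseman moves involving branch points and the sideways move: here one must verify that the locally prescribed boundary labels really do admit a unique extension, which forces the simultaneous use of the idempotency axiom, the invertibility of $\uline{*}b$ and $\oline{*}b$, and the bijectivity of $S$, so that an inverse $\Psi_i^{-1}$ is also well-defined. The remaining moves reduce to a direct diagrammatic verification patterned on the classical biquandle argument referenced in \cite{KKKL}, with the region labeling handled by the additional $B$-set axiom. Assembling the $\Psi_i$ and verifying that each is a bijection gives the claimed bijection between ${\rm Col}_{(B,X)}^{\rm SB}(D)$ and ${\rm Col}_{(B,X)}^{\rm SB}(D')$.
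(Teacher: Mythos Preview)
The paper does not actually supply a proof of this proposition: it is stated with the attribution ``(cf.\ \cite{KKKL})'' and left without argument, as a known result from the literature on shadow biquandle colorings. Your proposal is therefore not competing with any proof in the paper; rather, you have written out the standard argument that the citation is meant to invoke. As a sketch it is essentially correct: reduce to a single oriented Roseman move between connected diagrams, fix the coloring outside the supporting ball, and use the biquandle axioms for the semi-sheet labels together with the $B$-set axiom and bijectivity of $*a$ for the region labels to extend uniquely across the ball.
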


Next, we show how to obtain a cocycle invariant by using the  $(B,X)$-colorings of a  diagram.

Let $C$ be a  $(B,X)$-coloring of $D$.
We define the local chain 
$w^{\rm SB}(D, C; \tau) \in C^{\rm SB}_3 (B, X)$ at each triple point $\tau$ by 
$$w^{\rm SB}(D, C; \tau) ={\rm sign}(\tau) \big(x,a,b, c \big)$$ when  $C(r_1)=x$, $C(b_1)=a$, $C(m_1)=b$ and $C(t_1)=c$, where $r_1$, $b_1$, $m_1$ and $t_1$ are the region, bottom-semi-sheet, middle-semi-sheet and top-semi-sheet of $\tau$   as depicted in Figure~\ref{triplepoint4}, see also Figure~\ref{triplepoint5}.
We define a chain by 
$$\displaystyle W^{\rm SB}(D, C)=\sum_{\tau \in \{\mbox{\small triple points of $D$}\}} w^{\rm SB}(D, C; \tau) \in C^{\rm SB}_3 (B, X).$$ 
\begin{figure}[ht]
  \begin{center}
    \includegraphics[clip,width=8.0cm]{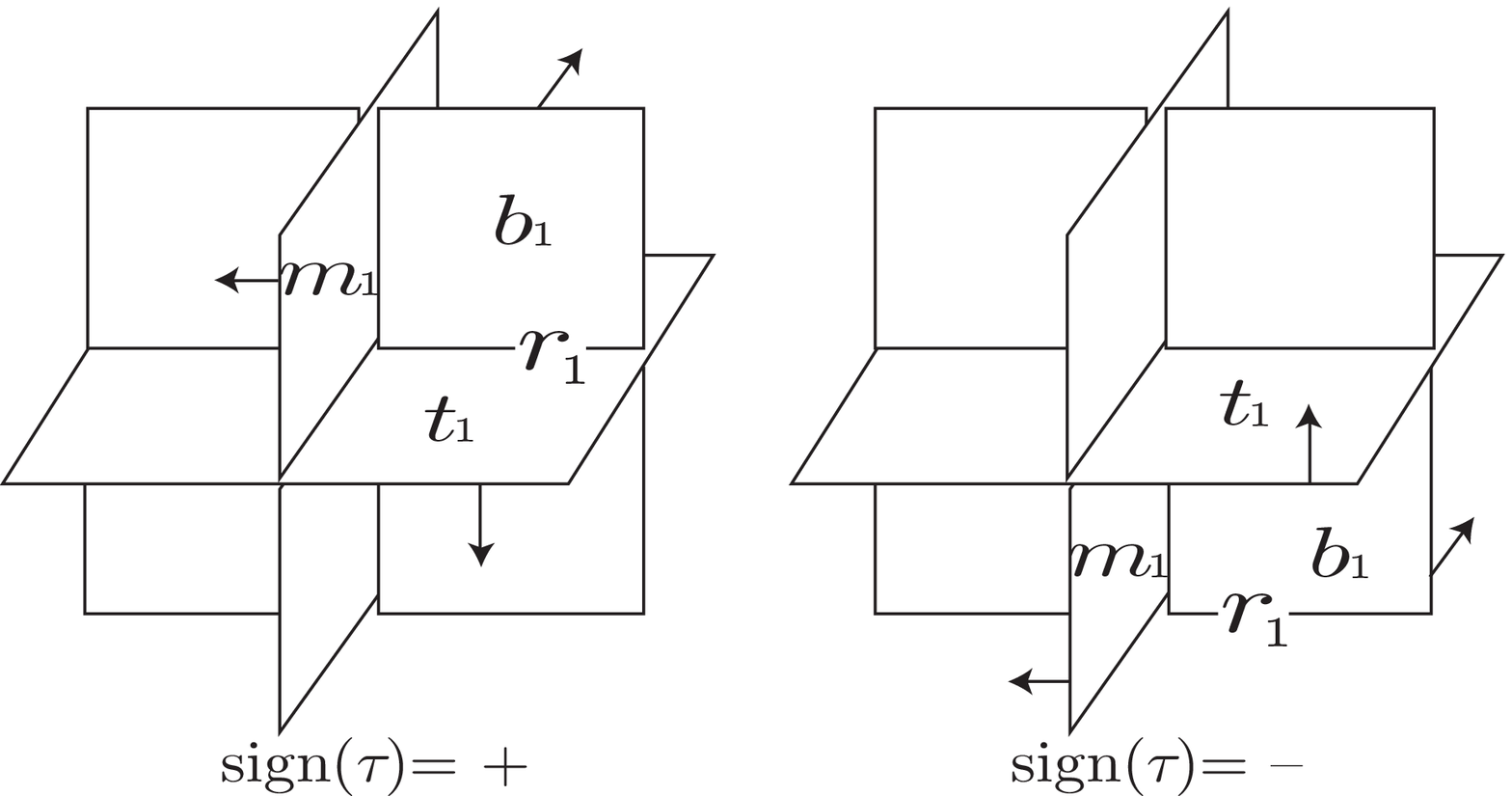}
    \caption{}
    \label{triplepoint4}
  \end{center}
\end{figure}
\begin{figure}[ht]
  \begin{center}
    \includegraphics[clip,width=8.0cm]{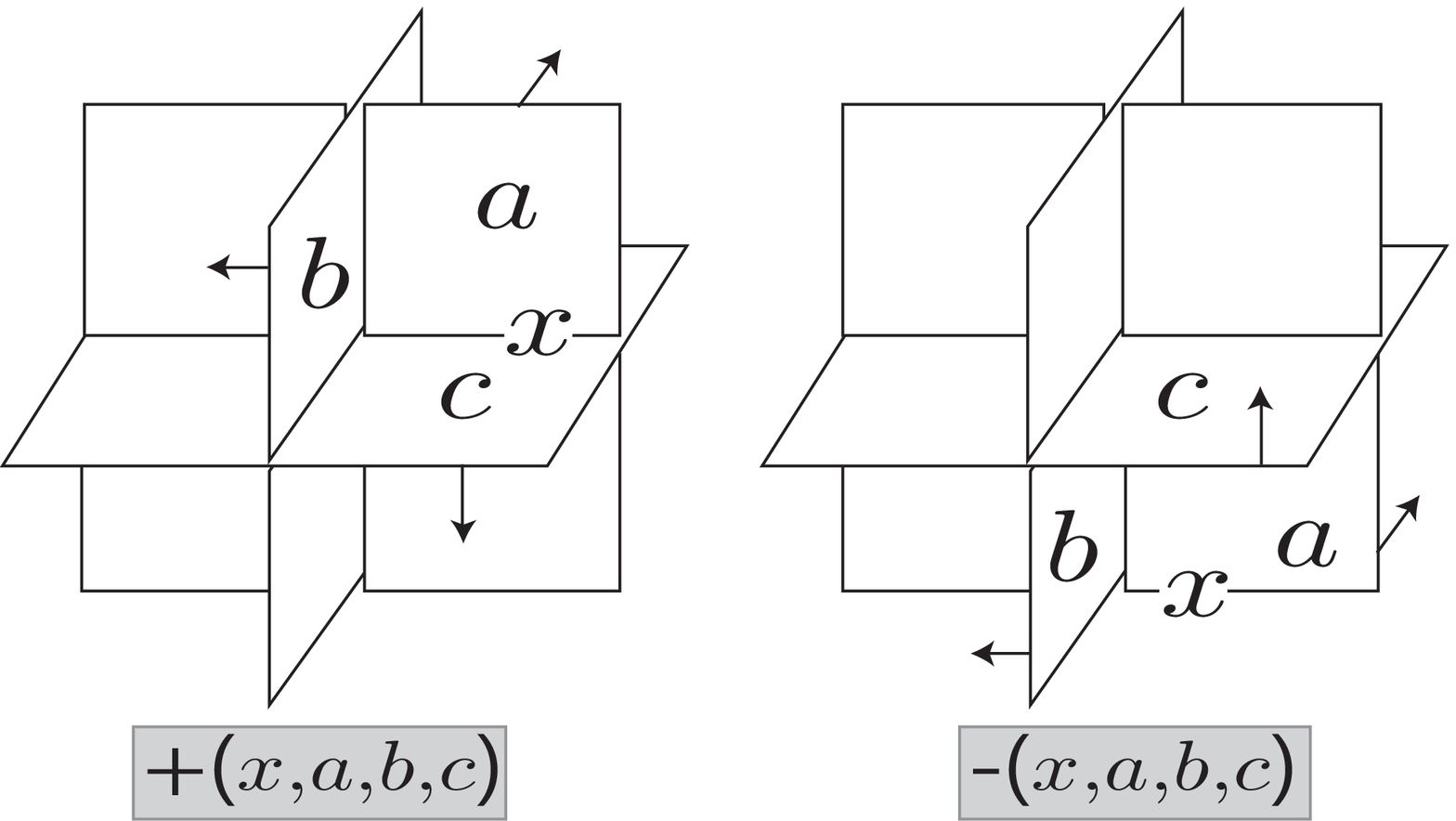}
    \caption{}
    \label{triplepoint5}
  \end{center}
\end{figure}

Let $A$ be an abelian group. For a $3$-cocycle $\theta \in C^3_{\rm SB}(B, X; A)$, we define
\[
\begin{array}{l}
\mathcal{H}^{\rm SB}(D)=\{[W^{\rm SB}(D, C)] \in H^{\rm SB}_3(B, X) \ | \ C \in {\rm Col}_{(B,X)}^{\rm SB} (D) \}, and \\[5pt]
\Phi_{\theta}^{\rm SB}(D)=\{\theta(W^{\rm SB}(D, C)) \in A \ | \ C \in {\rm Col}_{(B,X)}^{\rm SB} (D) \}
\end{array}
\]
as multisets. Then we have the following theorem:
\begin{theorem}{\rm (cf. \cite{KKKL})}
$\mathcal{H}^{\rm SB}(D)$ and $\Phi_{\theta}^{\rm SB}(D)$ are invariants of $F$.
\end{theorem}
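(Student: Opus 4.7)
The plan is to establish invariance under the local moves that connect any two diagrams of $F$, then invoke the general fact that two connected diagrams of the same surface-link are related by a finite sequence of oriented Roseman moves through connected diagrams. By Proposition~\ref{prop:coloring1}, each such move induces a bijection on $(B,X)$-colorings, so it suffices to track how $W^{\rm SB}(D,C)$ changes under each Roseman move. The two assertions then follow together: $\mathcal{H}^{\rm SB}$ is invariant provided $W^{\rm SB}(D,C)$ changes by a boundary in $C_*^{\rm SB}(B,X)$, and $\Phi_\theta^{\rm SB}$ is invariant for any $3$-cocycle $\theta$ provided the same is true.

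First I would carry out a case-by-case analysis of the oriented Roseman moves. The moves introducing or removing a branch point, and the moves that modify double point curves without producing a triple point, create or destroy triple points in pairs whose contributions to $W^{\rm SB}(D,C)$ either cancel after the quotient (because the labels force one coordinate to be repeated, landing in $D_3^{\rm sb}(B,X)$ and hence vanishing in $C_3^{\rm SB}(B,X)$) or come in pairs $(+,-)$ with identical labels. The bijection from Proposition~\ref{prop:coloring1} guarantees that the colors on the surrounding semi-sheets and regions are forced in the required way; here the auxiliary identities of Lemma~\ref{lem:0} (and, when strong connectedness is invoked, Lemma~\ref{lem:1}) ensure consistency of the induced region labels across the move.

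The essential content is concentrated in the tetrahedral Roseman move, which exchanges two configurations each containing four triple points. Fixing a coloring that assigns $x$ to a distinguished region and $a,b,c,d$ to the four bottommost under-semi-sheets around the center of the move, a direct computation of the four weighted contributions $w^{\rm SB}(D,C;\tau)$ on each side of the move (using the biquandle axioms and the $B$-set axiom to propagate labels through adjacent semi-sheets and regions) shows that the difference of the two local chains equals $\pm\partial_4^{\rm SB}\bigl(x,a,b,c,d\bigr)$ in $C_3^{\rm SB}(B,X)$. Consequently $[W^{\rm SB}(D,C)]$ is unchanged in $H_3^{\rm SB}(B,X)$, and $\theta\bigl(W^{\rm SB}(D,C)\bigr)$ is unchanged because $\delta^3_{\rm SB}\theta=\theta\circ\partial_4^{\rm SB}=0$.

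The main obstacle is this tetrahedral-move computation: bookkeeping of the twelve semi-sheet labels, the eight surrounding region labels, and the signs of the four triple points on each side of the move must be carried out carefully so that the resulting chain matches $\partial_4^{\rm SB}$ exactly, modulo degenerate terms. The branch-point moves are conceptually simple but also require care to verify that the two triple points produced carry a coordinate pair forced to coincide, so that they are killed in the quotient by $D_*^{\rm sb}(B,X)$. Once these verifications are in place, the two invariance statements follow at once, paralleling the argument given in \cite{KKKL} for the shadow quandle case.
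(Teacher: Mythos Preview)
The paper does not supply its own proof of this theorem: it is stated with the attribution ``(cf.\ \cite{KKKL})'' and no argument is given. Your outline is exactly the standard route taken in that reference and in the quandle/biquandle literature generally: check invariance of $[W^{\rm SB}(D,C)]$ under each oriented Roseman move, with the tetrahedral move producing the boundary $\partial_4^{\rm SB}(x,a,b,c,d)$ and the remaining moves contributing either cancelling pairs or degenerate chains killed by $D_*^{\rm sb}$. So your approach is correct and, as far as one can tell, the same as what \cite{KKKL} does.

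Two small corrections to your write-up. First, this theorem is stated for an arbitrary shadow biquandle $(B,X)$, with no strong-connectedness hypothesis; the parenthetical appeal to Lemma~\ref{lem:1} is therefore out of place. Consistency of the region labels across a Roseman move follows from the $B$-set axiom in Definition~\ref{def:B-set}(1) alone (this is essentially what Lemma~\ref{lem:0} records). Second, your description of the non-tetrahedral moves is slightly garbled: the branch-point creation/cancellation moves involve no triple points at all, the branch-point-through-a-sheet move produces a single triple point whose weight is degenerate (two adjacent biquandle labels coincide by the first biquandle axiom $a\uline{*}a=a\oline{*}a$), and it is a separate move that creates or removes a cancelling pair of triple points. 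None of this affects the validity of your plan, but the bookkeeping should be stated accurately.
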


%%%%%%%%%%%%%%%%%%%%%%%%%%%%%
%%%%%%%%%%%%%%%%%%%%%%%%%%%%%
%%%%%%%%%%%%%%%%%%%%%%%%%%%%%
%%%%%%%%%%%%%%%%%%%%%%%%%%%%%
%%%%%%%%%%%%%%%%%%%%%%%%%%%%%

\subsection{Local biquandle homology groups}

Let $(X,[\,])$ be a knot-theoretic horizontal-ternary-quasigroup and $(X, \{\uline{\star}\}, \{ \oline{\star}\}) $ the local biquandle associated with $(X,[\,])$.
Let $n \in \mathbb Z$.
Let $C^{\rm lb}_n(X)$ be the free $\mathbb Z$-module generated by the elements of 
\[
\bigcup_{x\in X} (\{x\} \times X)^n =\big\{\big( (x,y_1), (x,y_2), \ldots , (x,y_n) \big) ~|~ x, y_1, \ldots , y_n \in X \big\}
\]
if $n\geq 1$, and $C^{\rm lb}_n(X)=0$ otherwise.
We define a homomorphism $\partial_n^{\rm lb} : C_n^{\rm lb} (X) \to C_{n-1}^{\rm lb} (X)$ by 
\begin{align}
&\partial_n^{\rm lb} \Big( \big( (x,y_1), \ldots , (x,y_n) \big) \Big) = \sum_{i=1}^{n} (-1)^i \big\{ \big( (x,y_1), \ldots, (x,y_{i-1}), (x,y_{i+1}), \ldots  , (x,y_n) \big) \notag \\
&- \big( (x,y_1)\uline{\star} (x,y_i), \ldots, (x,y_{i-1})\uline{\star} (x,y_i),    (x,y_{i+1})\oline{\star} (x,y_i) ,\ldots  , (x,y_n) \oline{\star} (x,y_i)\big) \big\} \notag \\
 &= \sum_{i=1}^{n} (-1)^i \big\{ \big( (x,y_1), \ldots,  (x,y_{i-1}), (x,y_{i+1}),  \ldots  , (x,y_n) \big) \notag \\
&- \big( (y_i, [x,y_1, y_i] ), \ldots, (y_i, [x,y_{i-1}, y_i]),    (y_i, [x,y_i, y_{i+1}]) ,\ldots  , (y_i, [x,y_i, y_{n}])\big) \big\} \notag 
\end{align}
if $n\geq 2$, and $\partial_n^{\rm lb}=0$ otherwise. Then $C_*^{\rm lb}(X)=\{C_n^{\rm lb}(X), \partial_n^{\rm lb}\}_{n\in \mathbb Z}$ is a chain complex.
Let $D_n^{\rm lb}(X)$ be a submodule of $C_n^{\rm lb}(X)$ that is generated by the elements of 
\[
\Big\{\big( (x,y_1),  \ldots , (x,y_n) \big) \in \bigcup_{x\in X
}  (\{x\} \times X)^n ~\Big|~ \mbox{ $y_i =y_{i+1}$ for some $i\in \{1, \ldots , n-1 \} $  }  \Big\}.
\]
Then 
$D_*^{\rm lb}(X)=\{D_n^{\rm lb}(X), \partial_n^{\rm lb}\}_{n\in \mathbb Z}$ is a subchain complex of $C_*^{\rm lb}(X)$.
Therefore the chain complex $$C_*^{\rm LB} (X)=\{C_n^{\rm LB}(X):=C_n^{\rm lb}(X)/D_n^{\rm lb}(X), \partial_n^{\rm LB}:=\partial_n^{\rm lb}\}_{n\in \mathbb Z}$$ is induced.
We call the homology group $H_n^{\rm LB} (X)$ of $C_*^{\rm LB} (X)$ the \textit{$n$th local biquandle homology group} of $(X, \{\uline{\star}\} , \{ \oline{\star}\})$.

For an abelian group $A$, we define the chain and cochain complexes by 
\[
\begin{array}{l}
C_n^{\rm LB}(X; A)=C_n^{\rm LB}(X) \otimes A, \quad \partial_n^{\rm LB} \otimes {\rm id}  \mbox{ and }\\[5pt]
C_{\rm LB}^n(X; A) ={\rm Hom}(C_n^{\rm LB}(X); A), \quad \delta^n_{\rm LB} \mbox{ s.t. }\delta^n_{\rm LB}(f)=f \circ \partial_{n+1}^{\rm LB}.
\end{array}
\]
Let $C_\ast^{\rm LB}(X; A)=\{C_n^{\rm LB}(X; A), \partial_n^{\rm LB}\otimes {\rm id}\}_{n\in \mathbb Z}$ and $C_{\rm LB}^\ast(X; A)=\{C_{\rm LB}^n(X; A), \delta^n_{\rm LB}\}_{n\in \mathbb Z}$. The \textit{nth homology group} $H_n^{\rm LB}(X; A)$ \textit{and nth cohomology group} $H^n_{\rm LB}(X; A)$ of $(X, \{\uline{\star}\}, \{\oline{\star}\})$ with coefficient group $A$ are defined by
\[
H_n^{\rm LB}(X; A)=H_n(C_\ast^{\rm LB}(X; A)) \qquad {\rm and} \qquad H_{\rm LB}^n(X; A)=H^n(C^\ast_{\rm LB}(X; A)).
\]
The {\it $n$th cocycle group} with coefficient group $A$  is denoted by $Z^n_{\rm LB}(X; A)$. 
Note that we omit the coefficient group  $A$ if $A=\mathbb Z$ as usual.

\subsection{Local biquandle colorings of link diagrams, cocycle invariants}\label{subsection:linkinvariant}
Let $(X,[\,])$ be a knot-theoretic horizontal-ternary-quasigroup and $(X, \{\uline{\star}\}, \{ \oline{\star}\}) $ the local biquandle associated with $(X,[\,])$.
Let $D$ be a connected diagram of a link $L$.
\begin{definition}\label{def:localbiquandlecolor}
A \textit{(local biquandle) $X^2$-coloring} of $D$ is a map $C: \mathcal{SA}(D) \to X^2$ satisfying the following condition:
\begin{itemize}
\item For a crossing composed of under-semi-arcs $u_1, u_2$ and over-semi-arcs $o_1, o_2$ as depicted in Figure~\ref{coloring6}, let $C(u_1)=(x_1,y), C(o_1)=(x_2, z)$. Then 
\begin{itemize}
\item $x_1=x_2$,  
\item $C(u_2) = C(u_1) \uline{\star} C(o_1)= (x,y) \uline{\star} (x,z) = (z, [x,y,z])$, and 
\item $C(o_2) = C(o_1) \oline{\star} C(u_1)=(x,z) \oline{\star} (x,y) = (y, [x,y,z])$
\end{itemize}
hold,  where $x= x_1 =x_2$, see also Figure~\ref{coloring1}.
\end{itemize}
We denote by ${\rm Col}_{X^2}^{\rm LB} (D) $ the set of  $X^2$-colorings of $D$.
\end{definition}

\begin{figure}[ht]
  \begin{center}
    \includegraphics[clip,width=10.0cm]{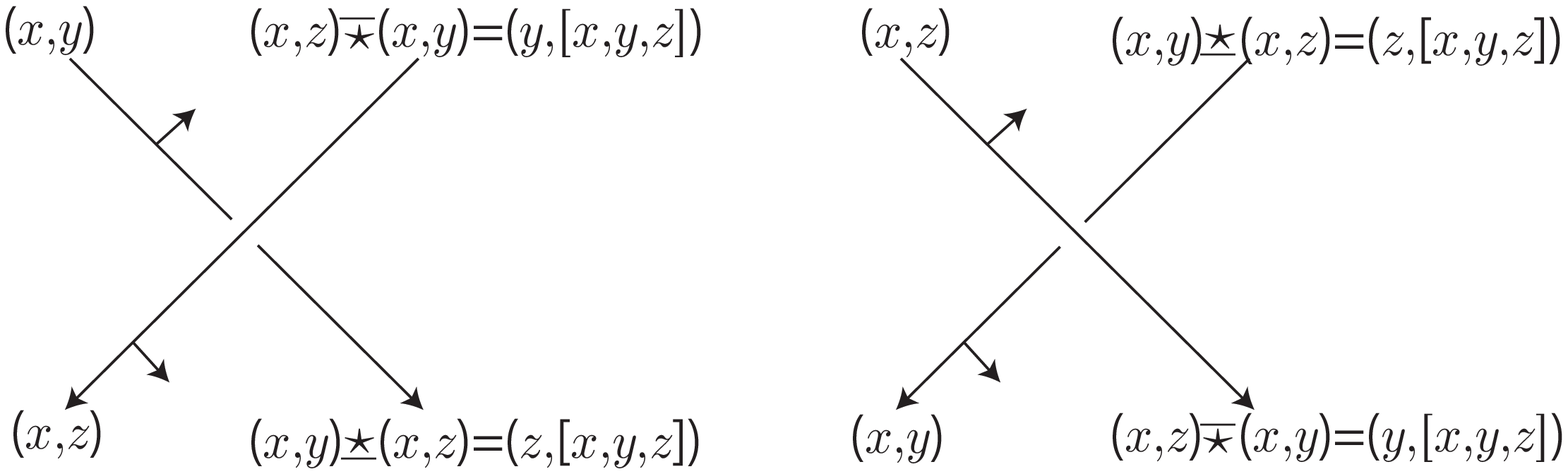}
    \caption{}
    \label{coloring1}
  \end{center}
\end{figure}

\begin{proposition}\label{prop:coloring1}{\rm (\cite{NOO})}
Let $D$ and $D'$ be connected diagrams of links. 
If $D$ and $D'$ represent the same link, then there exists a bijection between  
${\rm Col}_{X^2}^{\rm LB} (D) $ and ${\rm Col}_{X^2}^{\rm LB} (D')  $.
\end{proposition}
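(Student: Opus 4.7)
The plan is to reduce invariance to Reidemeister moves in the standard way, then verify each move locally using the tribracket axioms. Since $D$ and $D'$ are connected diagrams of the same link, the fact recalled in the Preliminaries gives a finite sequence $D = D_0 \to D_1 \to \cdots \to D_n = D'$ of connected diagrams where each $D_i \to D_{i+1}$ is a single oriented Reidemeister move. By composing, it suffices to build a bijection $\phi_i \colon \mathrm{Col}^{\mathrm{LB}}_{X^2}(D_i) \to \mathrm{Col}^{\mathrm{LB}}_{X^2}(D_{i+1})$ for each $i$. Since the two diagrams agree outside a disk containing the move, I would define $\phi_i$ to be the identity on all semi-arcs outside that disk and show that any coloring of the boundary semi-arcs extends uniquely, and coherently, across the move region on both sides.

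The verification breaks into cases by the type of move. For an oriented R2 move, the two incoming semi-arcs determine two pairs $(x,y), (x,z) \in X^2$ (forced to share the first coordinate by Definition~\ref{def:localbiquandlecolor}), and on the resolved side the outgoing semi-arcs are the same two pairs; on the crossed side they are $(z,[x,y,z])$ and $(y,[x,y,z])$. Axiom~($\mathcal{H}$1) provides exactly the existence and uniqueness of inverses for the three slots of $[\,]$ needed to show that either local configuration extends uniquely from any admissible boundary data, yielding the desired bijection. The oriented R1 moves are handled in the same spirit: the local semi-arc configuration collapses to a consistency check of the form $[x,y,y]=y$ (and its variants), which is forced by ($\mathcal{H}$1)(ii) applied with the same element in two slots, so the bijection is again the identity after erasing the kink.

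The main case, and the one I expect to be the only real obstacle, is the oriented R3 move. Here six semi-arcs meet in three crossings, and one must trace the $(x,y,z,w) \in X^4$ assigned at the three corners around both sides of the triangle. When this bookkeeping is carried out, the equality of the two resulting colorings on the outgoing semi-arcs reduces, for each of the three pairs of outgoing arcs, to one of the equalities
\[
[y,[x,y,z],[x,y,w]] = [z,[x,y,z],[x,z,w]] = [w,[x,y,w],[x,z,w]],
\]
which is exactly axiom~($\mathcal{H}$2). The other variants of oriented R3 (differing by over/under information and by the orientations of the three strands) are handled by the same identity, applied after using ($\mathcal{H}$1) to rename the free parameters; here one must be careful to match the conventions in Figure~\ref{coloring1} for which coordinate of a pair encodes the region and which encodes the arc label.

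Putting the three move verifications together, the composition $\phi = \phi_{n-1} \circ \cdots \circ \phi_0$ is a bijection $\mathrm{Col}^{\mathrm{LB}}_{X^2}(D) \to \mathrm{Col}^{\mathrm{LB}}_{X^2}(D')$, proving the proposition. The hypothesis that each $D_i$ is connected is only used to ensure the sequence of moves exists and stays within the category of connected diagrams, so the local structure of the argument is unchanged.
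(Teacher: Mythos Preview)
The paper does not prove this proposition; it is quoted from \cite{NOO} without argument. Your outline---reduce to a sequence of oriented Reidemeister moves through connected diagrams (as recalled in the Preliminaries) and verify each move locally via the tribracket axioms---is the standard route and is presumably what \cite{NOO} does. Your R2 and R3 sketches are accurate: ($\mathcal H$1) supplies exactly the invertibility needed for R2, and the three-term identity in ($\mathcal H$2) is precisely the R3 relation after the bookkeeping you describe.

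The R1 paragraph contains a slip worth fixing. The identity $[x,y,y]=y$ is neither what is needed nor a consequence of ($\mathcal H$1)(ii); for instance, for the dihedral tribracket $[x,y,z]=x-y+z$ of Example~\ref{ex:3} one has $[x,y,y]=x\neq y$. The correct mechanism is this: in an oriented kink the loop region lies either on the left of both strand directions or on the right of both, so the loop semi-arc is identified with $u_1=o_1$ or with $u_2=o_2$ in the notation of Figure~\ref{coloring6}. In either case the coloring condition forces the two equal first coordinates to agree (so $y=z$), hence the two boundary semi-arcs $s_1,s_2$ receive the \emph{same} pair---$(x,y)$ in one case, $(y,[x,y,y])$ in the other---and the loop color is then uniquely determined, trivially in the first case and by solving $[x,a,a]=b$ for $x$ via ($\mathcal H$1)(iii) in the second. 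The cross identifications $u_1=o_2$ or $u_2=o_1$, which would indeed demand extra identities, simply do not arise for oriented R1. With this correction your argument is complete.
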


Next, we show how to obtain a cocycle invariant by using the $X^2$-colorings of a connected diagram.

Let $C$ be an  $X^2$-coloring of $D$.
We define the local chain 
$w^{\rm LB}(D, C; \chi) \in C^{\rm LB}_2 (X)$ at each crossing $\chi$ by 
$$w^{\rm LB}(D, C; \chi) ={\rm sign}(\chi) \big((x,y), (x,z)\big)$$ when  $C(u_1)=(x,y)$ and $C(o_1)=(x,z)$, where $u_1$ and $o_1$ are the under-semi-arc and over-semi-arc of $\chi$  as depicted in Figure~\ref{coloring6}, see also Figure~\ref{coloring1}.
We define a chain by 
$$\displaystyle W^{\rm LB}(D, C)=\sum_{\chi \in \{\mbox{\small crossings of $D$}\}} w^{\rm LB}(D, C; \chi) \in C^{\rm LB}_2 (X).$$

Let $A$ be an abelian group. For a $2$-cocycle $\theta \in C^2_{\rm LB}(X; A)$, we define
\begin{align*}
&\mathcal{H}^{\rm LB}(D)=\big\{[W^{\rm LB}(D, C)] \in H^{\rm LB}_2(X) \ \big| \ C \in {\rm Col}_{X^2}^{\rm LB} (D) \big\},\mbox{ and }\\ 
&\Phi_{\theta}^{\rm LB}(D)=\big\{\theta\big(W^{\rm LB}(D, C)\big) \in A \ \big| \ C \in {\rm Col}_{X^2}^{\rm LB} (D) \big\}
\end{align*}
as multisets. Then we have the following theorem:
\begin{theorem}{\rm (\cite{NOO})}
$\mathcal{H}^{\rm LB}(D)$ and $\Phi_{\theta}^{\rm LB}(D)$ are invariants of $L$.
\end{theorem}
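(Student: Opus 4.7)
The plan is the standard Reidemeister-move invariance argument for $2$-cocycle-type link invariants. By the fact recalled in Section~2, any two connected diagrams $D, D'$ of the same link $L$ are related by a finite sequence $D = D_0 \to D_1 \to \cdots \to D_n = D'$ of connected diagrams differing by one oriented Reidemeister move at each step, and Proposition~\ref{prop:coloring1} provides a compatible bijection between their $X^2$-colorings. It therefore suffices to prove that for each move and each pair of corresponding colorings $C_i, C_{i+1}$, the chain difference $W^{\rm LB}(D_i, C_i) - W^{\rm LB}(D_{i+1}, C_{i+1})$ lies in $\partial_3^{\rm LB}C_3^{\rm LB}(X)$. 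Invariance of $\mathcal{H}^{\rm LB}(D)$ then follows in $H_2^{\rm LB}(X)$, and invariance of $\Phi_\theta^{\rm LB}(D)$ follows from $\theta$ being a $2$-cocycle, which annihilates $\partial_3^{\rm LB}$-images and, by construction of $C_2^{\rm LB}(X)$, is also zero on the degenerate submodule $D_2^{\rm lb}(X)$. That the outputs are genuine multisets rather than mere sets is automatic from the bijectivity in Proposition~\ref{prop:coloring1}.

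The per-move check has three cases. For an oriented R1 move, the two semi-arcs meeting at the new kink are connected outside the new crossing, so the operation formulas $(x,y)\uline{\star}(x,z) = (z,[x,y,z])$ and $(x,z)\oline{\star}(x,y) = (y,[x,y,z])$ force the crossing contribution to be of the form $\pm((x,y),(x,y))$, which lies in $D_2^{\rm lb}(X)$ and hence vanishes in $C_2^{\rm LB}(X)$. For an oriented R2 move, the two new crossings have opposite signs, and direct substitution into $\uline{\star},\oline{\star}$ shows that they contribute the same element of $(\{x\}\times X)^2$ with opposite signs, so they cancel in $C_2^{\rm LB}(X)$. For an oriented R3 move, one writes out the three crossings on each side of the move with the colorings induced from three incoming semi-arcs carrying $(x,y), (x,z), (x,w)$; using axiom $(\mathcal{H}2)$ of the horizontal-tribracket to identify the outgoing colors on the two sides, a direct expansion shows that the signed difference of the six crossing contributions equals $\pm\partial_3^{\rm LB}\bigl(((x,y),(x,z),(x,w))\bigr)$, which is the desired boundary.

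The main obstacle is the R3 case, since several orientation variants of R3 exist and each requires matching the six-term boundary formula against the signed sum produced by $\uline{\star},\oline{\star}$. In practice one restricts to a generating set of oriented Reidemeister moves (e.g.\ one positive R3 and the positive and negative variants of R1, R2), checks those cases in full, and then propagates the conclusion to the remaining orientation variants via the bijection of Proposition~\ref{prop:coloring1} together with the biquandle-style axioms the operations $\uline{\star},\oline{\star}$ inherit from $(\mathcal{H}1)$. With this skeleton in place, the theorem follows formally, with $\Phi_\theta^{\rm LB}$-invariance being an immediate corollary of the cocycle identity $\delta^2_{\rm LB}\theta = 0$.
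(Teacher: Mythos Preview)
This theorem is not proved in the present paper; it is quoted from \cite{NOO} and stated without argument. Your sketch---checking that $W^{\rm LB}(D,C)$ changes by a degenerate chain under R1, cancels under R2, and changes by a $\partial_3^{\rm LB}$-boundary under R3, all paired with the coloring bijection of Proposition~\ref{prop:coloring1}---is exactly the method carried out in \cite{NOO}, so there is nothing further to compare against here and your approach matches the cited source.
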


%%%%%%%%%%%%%%%%%%%%%%%%%%%%%%%%%%%%%%%
%%%%%%%%%%%%%%%%%%%%%%%%%%%%%%%%%%%%%%%
%%%%%%%%%%%%%%%%%%%%%%%%%%%%%%%%%%%%%%%
%%%%%%%%%%%%%%%%%%%%%%%%%%%%%%%%%%%%%%%
%%%%%%%%%%%%%%%%%%%%%%%%%%%%%%%%%%%%%%%
%%%%%%%%%%%%%%%%%%%%%%%%%%%%%%%%%%%%%%%
%%%%%%%%%%%%%%%%%%%%%%%%%%%%%%%%%%%%%%%
%%%%%%%%%%%%%%%%%%%%%%%%%%%%%%%%%%%%%%%

\subsection{Local biquandle colorings of surface-link diagrams, cocycle invariants}
Let $(X,[\,])$ be a knot-theoretic horizontal-ternary-quasigroup and $(X, \{\uline{\star}\}, \{ \oline{\star}\}) $ the local biquandle associated with $(X,[\,])$.
Let $D$ be a connected diagram of a surface-link $F$.
\begin{definition}\label{def:localbiquandlecolor}
A \textit{(local biquandle) $X^2$-coloring} of $D$ is a map $C: \mathcal{SS}(D) \to X^2$ satisfying the following condition:
\begin{itemize}
\item For a double point curve composed of under-semi-sheets $u_1, u_2$ and over-semi-sheets $o_1, o_2$ as depicted in Figure~\ref{doublepointshadow}, let $C(u_1)=(x_1,y), C(o_1)=(x_2, z)$. Then 
\begin{itemize}
\item $x_1=x_2$,  
\item $C(u_2) = C(u_1) \uline{\star} C(o_1)= (x,y) \uline{\star} (x,z) = (z, [x,y,z])$, and 
\item $C(o_2) = C(o_1) \oline{\star} C(u_1)=(x,z) \oline{\star} (x,y) = (y, [x,y,z])$
\end{itemize}
hold,  where $x= x_1 =x_2$, see also Figure~\ref{doublepointlocal2}.
\end{itemize}
We denote by ${\rm Col}_{X^2}^{\rm LB} (D) $ the set of  $X^2$-colorings of $D$. 
\end{definition}

\begin{figure}[ht]
  \begin{center}
    \includegraphics[clip,width=6.0cm]{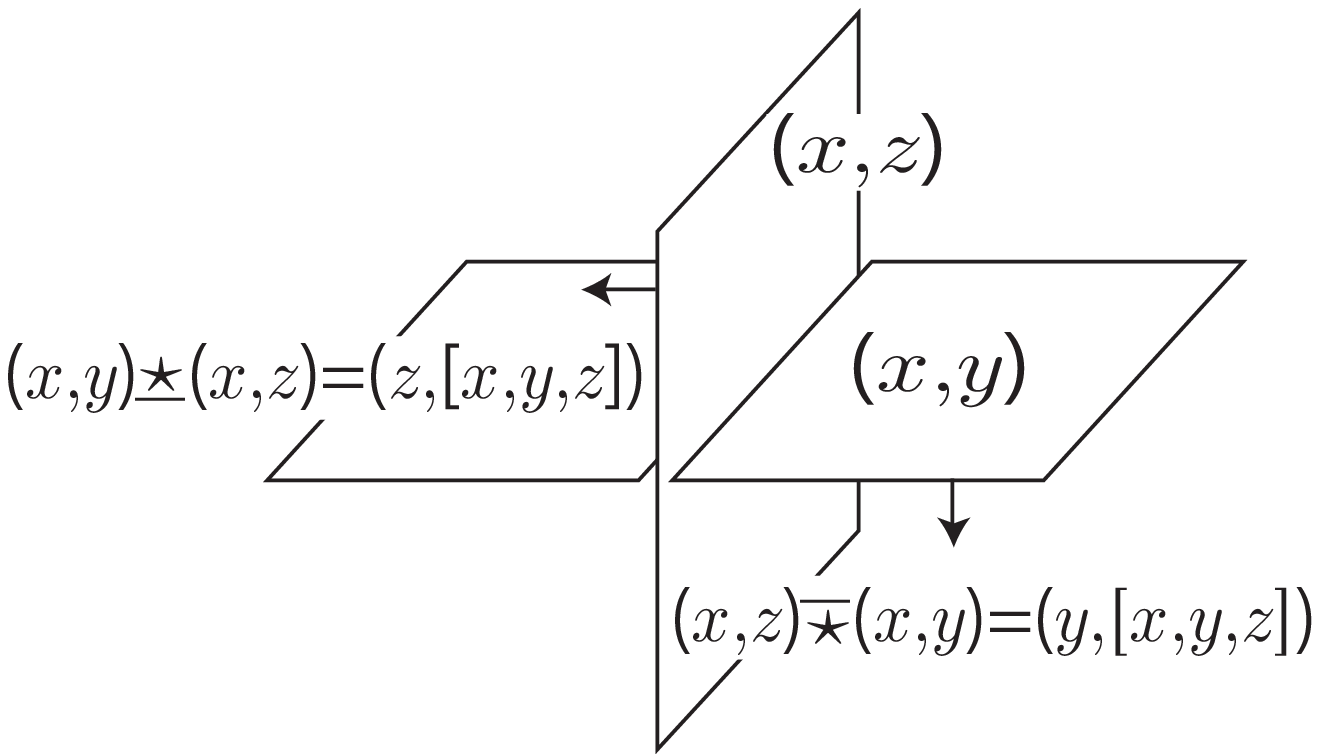}
    \caption{}
    \label{doublepointlocal2}
  \end{center}
\end{figure}

\begin{proposition}\label{prop:coloring1}{\rm (\cite{NOO})}
Let $D$ and $D'$ be connected diagrams of surface-links. 
If $D$ and $D'$ represent the same surface-link, then there exists a bijection between  
${\rm Col}_{X^2}^{\rm LB} (D) $ and ${\rm Col}_{X^2}^{\rm LB} (D')  $.
\end{proposition}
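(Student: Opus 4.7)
The plan is to reduce the claim to a local check at each Roseman move and then verify local invariance using the horizontal-tribracket axioms $(\mathcal{H}1)$ and $(\mathcal{H}2)$. Since $D$ and $D'$ represent the same surface-link and both are connected, by the fact recalled in the preliminaries there exists a finite sequence
\[
D = D_0 \xrightarrow{R_0} D_1 \xrightarrow{R_1} \cdots \xrightarrow{R_{n-1}} D_n = D'
\]
of connected surface-link diagrams and oriented Roseman moves. It therefore suffices to produce, for each $i$, a bijection $\Psi_i : {\rm Col}_{X^2}^{\rm LB}(D_i) \to {\rm Col}_{X^2}^{\rm LB}(D_{i+1})$; composing these $\Psi_i$ yields the desired bijection.

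For each Roseman move, $D_i$ and $D_{i+1}$ agree outside a small $3$-ball $B$. I would define $\Psi_i$ by sending a coloring $C$ of $D_i$ to the unique coloring $C'$ of $D_{i+1}$ that coincides with $C$ on the semi-sheets meeting $\partial B$; this forces the assignment on the (possibly new) semi-sheets inside $B$. The existence and uniqueness of $C'$ on each interior semi-sheet is where axiom $(\mathcal{H}1)$ is used: the three statements in $(\mathcal{H}1)$ provide the unique solvability in each slot of $[x,y,z]=w$, which is exactly what is needed to propagate colorings through moves introducing or eliminating branch points and double point curves. The coloring conditions of Definition~\ref{def:localbiquandlecolor} (first-coordinate matching at double points, and the equalities via $\uline{\star}, \oline{\star}$) are then automatically satisfied on the boundary of $B$ because $C$ was already a coloring.

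The main obstacle is the tetrahedral Roseman move, the $3$-dimensional analogue of Reidemeister~III, which gives rise to a triple point. One must verify that the two colorings obtained by propagating along the two sides of the move genuinely agree on every outgoing semi-sheet. Writing the colors of the three incoming semi-sheets through a triple point in the form $(x,y), (x,z), (x,w)$ and pushing through the under/middle/top crossings on each side of the move, one finds that agreement on the outgoing sheets is equivalent to the identity
\[
[y,[x,y,z],[x,y,w]] \;=\; [z,[x,y,z],[x,z,w]] \;=\; [w,[x,y,w],[x,z,w]],
\]
which is precisely axiom $(\mathcal{H}2)$. This is essentially the same algebraic check that underlies the shadow biquandle cocycle invariance at a triple point described earlier in the section.

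Finally, since each $\Psi_i$ is reversible by applying the inverse Roseman move and invoking $(\mathcal{H}1)$ again for uniqueness, each $\Psi_i$ is a bijection. Connectedness of the intermediate diagrams plays no role beyond guaranteeing the existence of the sequence $\{D_i\}$; the local verification at each move is purely algebraic. Composing the $\Psi_i$ produces the desired bijection between ${\rm Col}_{X^2}^{\rm LB}(D)$ and ${\rm Col}_{X^2}^{\rm LB}(D')$, completing the argument.
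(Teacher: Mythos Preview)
The paper does not give its own proof of this proposition: it is stated with the attribution ``(\cite{NOO})'' and no argument is supplied, so there is nothing in the present paper to compare your proof against. Your outline is the standard strategy one would expect in \cite{NOO}: reduce to a single Roseman move between connected diagrams, define the correspondence by leaving colors unchanged outside the local ball, and use $(\mathcal{H}1)$ for unique extension and $(\mathcal{H}2)$ for consistency at the tetrahedral move. As a sketch this is correct.

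Two small cautions if you want to turn the sketch into a full proof. First, there are seven Roseman moves (in the usual list), and several of them involve triple points or pairs of triple points, not only the tetrahedral move; each must be checked, and for the moves creating or cancelling a pair of triple points the verification uses both $(\mathcal{H}1)$ and $(\mathcal{H}2)$, not just $(\mathcal{H}1)$. Second, your remark that ``connectedness plays no role beyond guaranteeing the existence of the sequence'' is slightly misleading: for local biquandle colorings the first-coordinate matching condition at double curves implicitly encodes a region labeling, and connectedness is what ensures that this implicit region labeling is determined by the semi-sheet data and hence that the restriction-then-extension procedure is well defined across the move. None of this is a genuine gap in your argument, but it is worth saying explicitly.
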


Next, we show how to obtain a cocycle invariant by using the $X^2$-colorings of a connected diagram.

Let $C$ be an  $X^2$-coloring of $D$.
We define the local chain 
$w^{\rm LB}(D, C; \tau) \in C^{\rm LB}_3 (X)$ at each triple point $\tau$ by 
$$w^{\rm LB}(D, C; \tau) ={\rm sign}(\tau) \big((x,y), (x,z), (x,w)\big)$$ when  $C(b_1)=(x,y)$, $C(m_1)=(x,z)$ and $C(t_1)=(x,w)$, where $b_1$, $m_1$ and $t_1$ are the bottom-semi-sheet, middle-semi-sheet and top-semi-sheet of $\tau$ as depicted in Figure~\ref{triplepoint4}, see also Figure~\ref{triplepoint}.
We define a chain by 
$$\displaystyle W^{\rm LB}(D, C)=\sum_{\tau \in \{\mbox{\small triple points of $D$}\}} w^{\rm LB}(D, C; \tau) \in C^{\rm LB}_3 (X).$$ 

\begin{figure}[ht]
  \begin{center}
    \includegraphics[clip,width=8.0cm]{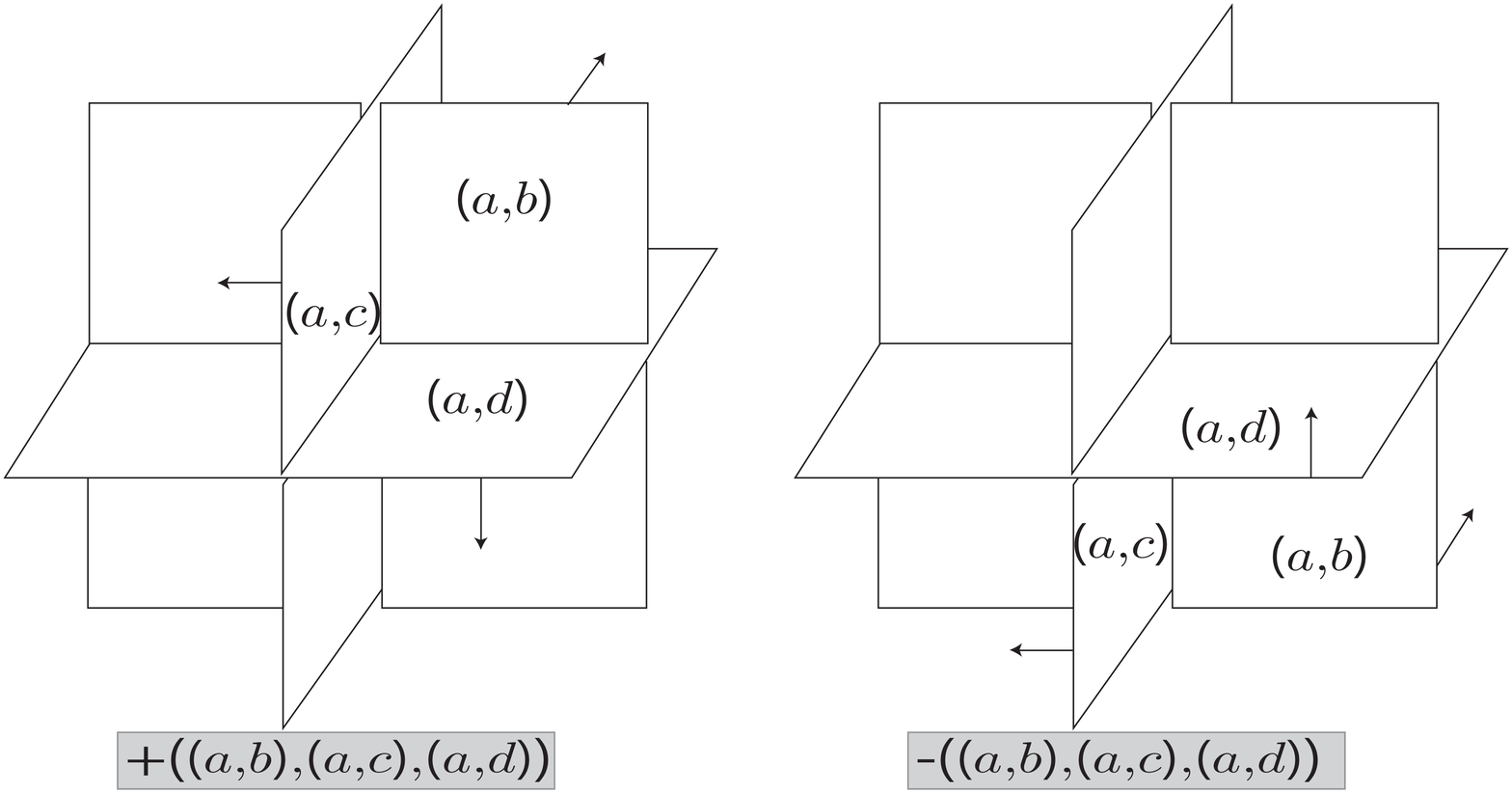}
    \caption{}
    \label{triplepoint}
  \end{center}
\end{figure}

Let $A$ be an abelian group. For a $3$-cocycle $\theta \in C^3_{\rm LB}(X; A)$, we define
\begin{align*}
&\mathcal{H}^{\rm LB}(D)=\big\{[W^{\rm LB}(D, C)] \in H^{\rm LB}_3(X) \ \big| \ C \in {\rm Col}_{X^2}^{\rm LB} (D) \big\},\mbox{ and }\\ 
&\Phi_{\theta}^{\rm LB}(D)=\big\{\theta\big(W^{\rm LB}(D, C)\big) \in A \ \big| \ C \in {\rm Col}_{X^2}^{\rm LB} (D) \big\}
\end{align*}
as multisets. Then we have the following theorem:
\begin{theorem}{\rm (\cite{NOO})}
$\mathcal{H}^{\rm LB}(D)$ and $\Phi_{\theta}^{\rm LB}(D)$ are invariants of $F$.
\end{theorem}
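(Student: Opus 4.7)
The plan is to follow the standard Carter--Kamada--Saito style argument, adapted to the local biquandle setting: invariance under connected oriented Roseman moves combined with the cocycle condition on $\theta$. First I would invoke Proposition~\ref{prop:coloring1} to reduce to checking invariance along a sequence of connected diagrams related by oriented Roseman moves. For each such move $D\rightsquigarrow D'$, the proposition supplies a canonical bijection $C\leftrightarrow C'$ between $X^2$-colorings of $D$ and $D'$ that agrees with $C$ outside a small disk where the move is supported. What remains to show is that for corresponding colorings $C,C'$, the local triple-point chains satisfy $[W^{\rm LB}(D,C)]=[W^{\rm LB}(D',C')]$ in $H_3^{\rm LB}(X)$, and hence $\theta(W^{\rm LB}(D,C))=\theta(W^{\rm LB}(D',C'))$ in $A$ whenever $\theta$ is a $3$-cocycle.

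Next I would sort the Roseman moves by how they interact with the set of triple points. For the moves that only modify double-point curves or branch points (for instance, the moves that add or remove a branch point, or that exchange two branch points across a double curve), the multiset of triple points is unchanged and one checks that the colors at each surviving triple point are literally identical before and after the move, so that $W^{\rm LB}(D,C)=W^{\rm LB}(D',C')$ on the nose. For the moves that create or destroy pairs of triple points sharing the same three coloring triples, the two newly appearing triple points contribute terms of opposite sign, which cancel in $C_3^{\rm LB}(X)$; the quotient by the degenerate submodule $D_*^{\rm lb}$ absorbs the remaining coincidences.

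The heart of the argument is the tetrahedral Roseman move, the one that corresponds to pushing a sheet through a triple point and whose projection is governed by a local configuration with four mutually transverse sheets. Here one reads the four region/semi-sheet colors at the associated ``quadruple'' configuration as elements $(x,y_1),(x,y_2),(x,y_3),(x,y_4)$ having a common first coordinate (by Definition~\ref{def:localbiquandlecolor}), and checks by a direct case analysis of the four triple points on each side of the move that
\[
W^{\rm LB}(D,C)-W^{\rm LB}(D',C')=\pm\,\partial_4^{\rm lb}\bigl((x,y_1),(x,y_2),(x,y_3),(x,y_4)\bigr)
\]
in $C_3^{\rm LB}(X)$, where the horizontal-tribracket axiom ($\mathcal{H}$2) is exactly what is needed to identify the eight outer vertices of the tetrahedron correctly. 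This matches the familiar biquandle/quandle situation and implies $[W^{\rm LB}(D,C)]=[W^{\rm LB}(D',C')]$; applying the cocycle $\theta$ and using $\theta\circ\partial_4^{\rm LB}=(\delta_{\rm LB}^3\theta)(\,\cdot\,)=0$ gives equality of the scalars. The main obstacle will be the bookkeeping in this tetrahedral case: one must keep careful track of signs (via orientations of the four sheets and the resulting signs of the triple points) and of which of the four sheets plays the role of bottom/middle/top at each of the eight triple points, and then match the resulting eight signed terms precisely with the definition of $\partial_4^{\rm lb}$ on $\bigl((x,y_1),\ldots,(x,y_4)\bigr)$.

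Finally, because the bijections supplied by Proposition~\ref{prop:coloring1} are compatible through any finite sequence of moves, the above equalities hold for every pair of corresponding colorings, so $\mathcal{H}^{\rm LB}(D)$ and $\Phi_\theta^{\rm LB}(D)$ agree as multisets on $D$ and $D'$, and are therefore invariants of the surface-link $F$.
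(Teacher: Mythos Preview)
The paper does not supply a proof of this theorem: it is stated with a citation to \cite{NOO} and no argument is given here. So there is no ``paper's own proof'' to compare against; the result is simply imported.

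Your sketch is the standard Carter--Kamada--Saito argument transported to the local biquandle setting, and it is essentially what one expects \cite{NOO} to contain. One small imprecision: your second bullet conflates two distinct phenomena. The Roseman move that creates or kills a \emph{pair} of oppositely signed triple points with identical color data is handled by cancellation in $C_3^{\rm LB}(X)$, as you say. But there is also the move in which a branch point passes through a transverse sheet, which creates or destroys a \emph{single} triple point; that triple point has two coinciding semi-sheet colors (coming from the branch), and it is precisely this move that is killed by passing to the quotient by $D_*^{\rm lb}(X)$. Your phrase ``the quotient by the degenerate submodule absorbs the remaining coincidences'' gestures at this, but as written it is attached to the wrong move. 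Separating these two cases cleanly would make the argument watertight; the tetrahedral analysis you outline is correct in spirit and is indeed where axiom ($\mathcal{H}2$) and the explicit form of $\partial_4^{\rm lb}$ enter.
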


%%%%%%%%%%%%%%%%%%%%%%%%%%%%%
%%%%%%%%%%%%%%%%%%%%%%%%%%%%%
%%%%%%%%%%%%%%%%%%%%%%%%%%%%%
%%%%%%%%%%%%%%%%%%%%%%%%%%%%%
%%%%%%%%%%%%%%%%%%%%%%%%%%%%%
%%%%%%%%%%%%%%%%%%%%%%%%%%%%%

%%%%%%%%%%%%%%%%%%%%%
%%%%%%%%%%%%%%%%%%%%%
%%%%%%%%%%%%%%%%%%%%%
%%%%%%%%%%%%%%%%%%%%%
%%%%%%%%%%%%%%%%%%%%%
%%%%%%%%%%%%%%%%%%%%%
%%%%%%%%%%%%%%%%%%%%%
%%%%%%%%%%%%%%%%%%%%%
\section{Main results}\label{sec:main}

\subsection{Corresponding tribrackets and local biquandles}
\begin{theorem}\label{thm:1}
Given a shadow biquandle $(B, X, \uline{*}, \oline{*}, *)$ such that $X$ is  strongly connected, we have a horizontal-tribracket $[\,]: X^3\to X$ defined by 
\[
[x,y,z] = y *\big( (x \darrow  z) \oline{*} (x \darrow  y) \big)= z * \big( (x \darrow  y) \uline{*} (x \darrow  z) \big).
\]
\end{theorem}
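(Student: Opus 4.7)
The plan is to verify, in order: (a) that the two expressions defining $[x,y,z]$ agree, so that $[\,]$ is well-defined; (b) the three existence/uniqueness conditions of ($\mathcal{H}1$); and (c) the ternary identity ($\mathcal{H}2$).

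For (a), set $a = x \darrow y$ and $b = x \darrow z$, so that $x*a = y$ and $x*b = z$; the $B$-set axiom at $x$ then gives $y*(b \oline{*} a) = (x*a)*(b \oline{*} a) = (x*b)*(a \uline{*} b) = z*(a \uline{*} b)$, as required. For ($\mathcal{H}1$)(i), I would fix $x, y$ and vary $z$: by strong connectivity, $b = x \darrow z$ ranges bijectively over $B$, then $b \oline{*} a$ does too (biquandle axiom), and finally $y*(b \oline{*} a)$ ranges bijectively over $X$ (bijectivity of $*a'$). So a unique $z$ realises each prescribed $w$. Condition (ii) is symmetric via the second expression. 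For ($\mathcal{H}1$)(iii), given $y,z,w$, I would apply the inverse of the biquandle bijection $S$ to $(y \darrow w, z \darrow w)$ to obtain a pair $(a,c)$, and set $x := y*^{-1} a$. Then $x*a = y$, and one application of the $B$-set axiom, combined with $y*(y \darrow w) = w = z*(z \darrow w)$ and bijectivity of $*(z \darrow w)$, forces $x*c = z$; hence $[x,y,z] = y*(c \oline{*} a) = w$. Uniqueness runs in reverse: from $[x,y,z] = w$ the pair $(c \oline{*} a, a \uline{*} c)$ must equal $(y \darrow w, z \darrow w)$, which pins down $(a,c)$ via $S$ and hence $x$.

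The main obstacle is ($\mathcal{H}2$). I would abbreviate $a = x \darrow y$, $b = x \darrow z$, $c = x \darrow w$ and use Lemma~\ref{lem:1}(4), namely $u \darrow (u*v) = v$, to simplify the inner $\darrow$-operations at each composite argument. A direct expansion then gives
\[
[y,[x,y,z],[x,y,w]] = y*(b \oline{*} a)*\bigl((c \oline{*} a) \oline{*} (b \oline{*} a)\bigr).
\]
The third biquandle axiom rewrites $(c \oline{*} a) \oline{*} (b \oline{*} a)$ as $(c \oline{*} b) \oline{*} (a \uline{*} b)$, which is exactly what appears in the direct expansion of $[z,[x,y,z],[x,z,w]]$ (using $z \darrow [x,y,z] = a \uline{*} b$ and $z \darrow [x,z,w] = c \oline{*} b$); this yields the first equality. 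For the second equality, a parallel expansion of $[w,[x,y,w],[x,z,w]]$ gives $y*(c \oline{*} a)*\bigl((b \uline{*} c) \oline{*} (a \uline{*} c)\bigr)$; the second biquandle axiom rewrites the inner bracket as $(b \oline{*} a) \uline{*} (c \oline{*} a)$, and a final application of the $B$-set axiom to $y$ with inputs $b \oline{*} a$ and $c \oline{*} a$ identifies this with the middle expression. The structural point is that the three equalities of ($\mathcal{H}2$) correspond neatly to the third biquandle axiom, the second biquandle axiom, and the $B$-set transport axiom; the real work is the bookkeeping of nested $\uline{*}$'s and $\oline{*}$'s through many rewrites.
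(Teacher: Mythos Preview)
Your proposal is correct and follows essentially the same strategy as the paper: verify well-definedness via the $B$-set axiom, handle ($\mathcal{H}1$)(iii) through $S^{-1}(y\darrow w, z\darrow w)$, and reduce ($\mathcal{H}2$) to the biquandle axioms after stripping off the inner $\darrow$'s with Lemma~\ref{lem:1}(4). The only noteworthy difference is cosmetic: for ($\mathcal{H}1$)(i)--(ii) you argue by a chain of bijections rather than giving the explicit preimage, and for the second equality in ($\mathcal{H}2$) you use the second biquandle axiom together with the $B$-set axiom to land on $[y,[x,y,z],[x,y,w]]$, whereas the paper uses the first biquandle axiom (and the second formula for the bracket) to land directly on $[z,[x,y,z],[x,z,w]]$---both routes are valid and of comparable length.
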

\begin{proof}
We first show the second equality. It holds since
 \[
\begin{array}{ll}
y *\big( (x \darrow  z) \oline{*} (x \darrow  y) \big)&\overeq{Lem.}{lem:1} \big( x*(x \darrow  y) \big)*\big( (x \darrow  z) \oline{*} (x \darrow  y) \big)\\[3pt]
&\overset{{\rm Def.}\ref{def:B-set}}{=}\big( x* (x \darrow  z)  \big)*\big( (x \darrow  y) \uline{*} (x \darrow  z) \big)\\[3pt]
&\overset{{\rm Lem.}\ref{lem:1}}{=}z*\big( (x \darrow  y) \uline{*} (x \darrow  z) \big).
\end{array}
\]

Next we show the first equality by checking the horizontal-tribracket axioms one by one.

($\mathcal{H}1$)-(i) Suppose that  $x,y,w\in X$ are given.
Let $z= x*a$, where $a=(y\darrow {*} w) \oline{*}^{-1} (x \darrow  y)$. 
We then have 
\[
[x,y,z] =  y *\big( (x \darrow  (x*a)) \oline{*} (x \darrow  y) \big) \overeq{Lem.}{lem:1} y* \big(a \oline{*} (x \darrow  y) \big)=y*  (y \darrow  w) \overeq{Lem.}{lem:1} w.
\]
The uniqueness of the above $z$ holds as follows: 
Assume that $[x,y,z] =w = [x,y,z']$ for some $z,z'\in X$.
We then have 
\[
y *\big( (x\darrow  z) \oline{*} (x \darrow  y) \big)=w=y *\big( (x\darrow  z') \oline{*} (x \darrow  y) \big).
\]
Hence we have 
\[
x \darrow  z   = (y \darrow w )\oline{*}^{-1} (x \darrow  y) = x \darrow  z'.
\]
Then we have 
\[
 z \overeq{Lem.}{lem:1} x*  (x \darrow  z) = x*  (x \darrow  z')   \overeq{Lem.}{lem:1} z'.
\]

%%%%%%%%%%%%%%%

($\mathcal{H}1$)-(ii)
Suppose that  $x,z,w\in X$ are given.
Let $y= x*a$, where $a=(z\darrow  w) \uline{*}^{-1} (x \darrow  z)$. 
We then have 
\[
[x,y,z] =  z * \big( (x \darrow  (x*a)) \uline{*} (x \darrow  z) \big)
 \overeq{Lem.}{lem:1} z * \big( a \uline{*} (x \darrow  z) \big)
= z *  (z \darrow  w)  \overeq{Lem.}{lem:1} w .
\]
The uniqueness of the above $y$ holds as follows: 
Assume that $[x,y,z] =w = [x,y',z]$ for some $y,y'\in X$.
We then have 
\[
z * \big( (x \darrow  y) \uline{*} (x \darrow  z) \big)=w=z * \big( (x \darrow  y') \uline{*} (x \darrow  z) \big).
\]
Hence we have 
\[
x \darrow  y   = (z \darrow w )\uline{*}^{-1} (x \darrow  z) = x \darrow  y'.
\]
Therefore we have 
\[
 y \overeq{Lem.}{lem:1} x*  (x \darrow  y) = x*  (x \darrow  y')   \overeq{Lem.}{lem:1} y'.
\]

%%%%%%%%%%%%%%%%%%%%%%%%

($\mathcal{H}1$)-(iii)
Suppose that  $y,z, w \in X$ are given. 
Let $x=y*^{-1} a = z *^{-1} b$, where $(a,b) \in B^2$ such that  $S(a, b) =(b\oline{*}a, a\uline{*} b) = ((y\darrow  w), (z\darrow  w))$ for the bijection $S: B^2 \to B^2$ in  Definition~\ref{def:biquandle}, and where it holds that 
\[
\begin{array}{ll}
y*^{-1} a &\overset{ {\rm Lem.}\ref{lem:1}}{=} \big( w*^{-1} (y\darrow  w ) \big) *^{-1} a 
\\[3pt]
&\ \ \ =\ \ \ \big(w*^{-1}(b\oline{*} a) \big) *^{-1} a\\[3pt]
&\overset{{\rm Lem.}\ref{lem:0}}{=}\big(w*^{-1} (a\uline{*} b) \big) *^{-1} b\\[3pt]
&\ \ \ =\ \ \ \big(w*^{-1} (z\darrow  w )\big) *^{-1} b\\[3pt]
& \overset{{\rm Lem.}\ref{lem:1}}{=} z*^{-1} b .
\end{array}
\]
We then have 
\[
\begin{array}{l}
[x,y,z] = y *\big( ((z*^{-1} b) \darrow  z) \oline{*} ((y*^{-1} a) \darrow  y) \big) 
\overeq{Lem.}{lem:1} 
y *( b \oline{*} a)  =y *( y\darrow w) =w. 
\end{array}
\]
The uniqueness of the above $x$ holds as follows: 
Assume that $[x,y,z] =w = [x',y,z]$ for some $x,x'\in X$.
We then have 
\[
y *\big( (x \darrow  z) \oline{*} (x \darrow  y) \big)=w=y *\big( ({x'}\darrow  z) \oline{*} ({x'}\darrow  y) \big)
\]
and 
\[
z * \big( (x \darrow  y) \uline{*} (x \darrow  z) \big)=w=z * \big( ({x'} \darrow  y) \uline{*} ({x'} \darrow  z) \big).
\]
Hence we have 
\[
 (x \darrow  z) \oline{*} (x \darrow  y) =y \darrow  w= ({x'} \darrow  z) \oline{*} ({x'} \darrow  y) 
\]
and 
\[
 (x \darrow  y) \uline{*} (x \darrow  z) =z \darrow  w= ({x'} \darrow  y) \uline{*} ({x' }\darrow  z).
\]
Since there exists a unique element $(c,d) \in B^2$ such that $S(c,d) = (d\oline{*} c, c\uline{*}d) = (y \darrow  w , z \darrow  w)$ by Definition~\ref{def:biquandle}, we have 
\[
x \darrow  y = c={x' }\darrow  y \mbox{  \ \ \ (and $x \darrow  z = d= {x'} \darrow  z$).}
\]
Therefore we have 
\[
x\overeq{Lem.}{lem:1} y*^{-1} (x \darrow  y) = y*^{-1} ({x' }\darrow  y) \overeq{Lem.}{lem:1} x'.
\]

%%%%%%%%%%%%%%%%%%%

($\mathcal{H}2$) For $x,y,z,w \in X$, we have 
\[
\begin{array}{lcl}
&& \hspace{-1.2cm}[y,[x,y,z],[x,y,w]] \\
&=& [x,y,z] * \Big( \big( y \darrow  [x,y,w] \big) \oline{*} \big( y \darrow  [x,y,z] \big) \Big)\\[4pt]
&=&  [x,y,z] * \Big( \big( y \darrow  \big(y*( (x\darrow  w) \oline{*} (x\darrow  y))\big) \big) \oline{*} \big( y \darrow  \big(y*( (x\darrow  z) \oline{*} (x\darrow  y))\big) \big) \Big)\\[4pt]
&\overeq{Lem.}{lem:1}& [x,y,z] * \Big( \big(  (x\darrow  w) \oline{*} (x\darrow  y) \big) \oline{*} \big( (x\darrow  z) \oline{*} (x\darrow  y) \big) \Big)\\[4pt]
&\overeq{Def.}{def:biquandle}& [x,y,z] * \Big( \big(  (x\darrow  w) \oline{*} (x\darrow  z) \big) \oline{*} \big( (x\darrow  y) \uline{*} (x\darrow  z) \big) \Big)\\[4pt]
&\overeq{Lem.}{lem:1}& [x,y,z] * \Big( \big( z \darrow  \big( z*  (  (x\darrow  w) \oline{*} (x\darrow  z) ) \big) \big)  \oline{*}  \big( z \darrow  \big( z* ( (x\darrow  y) \uline{*} (x\darrow  z) )  \big) \big) \Big)\\[4pt]
&\overeq{Lem.}{lem:1}& [x,y,z] * \Big( \big( z \darrow  [x,z,w] \big)  \oline{*}  \big( z \darrow  [x,y,z] \big) \Big)\\[4pt]
&=& [z,[x,y,z],[x,z,w]],
\end{array} 
\]
and
\[
\begin{array}{lcl}
&& \hspace{-1.2cm} [w,[x,y,w],[x,z,w]] \\
&=& [x,z,w] * \Big( \big( w \darrow  [x,y,w] \big) \uline{*} \big( w \darrow  [x,z,w] \big) \Big)\\[4pt]
&=&  [x,z,w]  * \Big( \big( w \darrow  \big(w*( (x\darrow  y) \uline{*} (x\darrow  w))\big) \big) \uline{*} \big( w \darrow  \big(w*( (x\darrow  z) \uline{*} (x\darrow  w))\big) \big) \Big)\\[4pt]
&\overeq{Lem.}{lem:1}&  [x,z,w]  * \Big( \big(  (x\darrow  y) \uline{*} (x\darrow  w) \big) \uline{*} \big( (x\darrow  z) \uline{*} (x\darrow  w) \big) \Big)\\[4pt]
&\overeq{Def.}{def:biquandle}& [x,z,w]  * \Big( \big(  (x\darrow  y) \uline{*} (x\darrow  z) \big) \uline{*} \big( (x\darrow  w) \oline{*} (x\darrow  z) \big) \Big)\\[4pt]
&\overeq{Lem.}{lem:1}& [x,z,w]  * \Big( \big( z \darrow  \big( z*  (  (x\darrow  y) \uline{*} (x\darrow  z)  ) \big) \big)  \uline{*}  \big( z \darrow  \big( z* ( (x\darrow  w) \oline{*} (x\darrow  z) )  \big) \big) \Big)\\[4pt]
&\overeq{Lem.}{lem:1}& [x,z,w]  * \Big( \big( z \darrow  [x,y,z] \big)  \uline{*}  \big( z \darrow  [x,z,w] \big) \Big)\\[4pt]
&=& [z,[x,y,z],[x,z,w]].
\end{array} 
\]
This completes the proof.
\end{proof}
\begin{definition}
For a shadow biquandle $(B,X)$ such that $X$ is strongly connected,  
we call the horizontal-tribacket $[\,]$ given in Theorem~\ref{thm:1} the {\it corresponding horizontal-tribacket} of $(B,X)$.
We call the local biquandle $(X, \{\oline{\star}\}, \{\uline{\star}\})$ associated with the corresponding horizontal-tribacket $[\,]$ of $(B,X)$ the {\it corresponding local biquandle} of $(B,X)$. 
\end{definition}

\subsection{Correspondence between (co)homology groups}\label{subsec:Correspondence between (co)homology groups}
 Let $(B, X,  \uline{*}, \oline{*}, *)$ be a shadow biquandle such that $X$ is strongly connected.
Let $[\,] :X^3\to X$ be the corresponding horizontal-tribracket of $(B, X)$, that is, it is defined by 
\[
[x,y,z] = y *\big( (x \darrow  z) \oline{*} (x \darrow  y) \big)= z * \big( (x \darrow  y) \uline{*} (x \darrow  z) \big).
\] 
Let $(X, \{ \uline{\star}\} , \{ \oline{\star}\})$ be the corresponding local biquandle of $(B,X)$, that is, it is the local biquandle associated with the above $(X,[\, ])$.
Define a homomorphism $\mu_n:  C_n^{\rm SB} (B, X)  \to C_n^{\rm LB} (X)$ by 
\[
\mu_n\big((x,a_1, \ldots , a_n)\big) = \big( (x, x* a_1 ), \ldots , (x, x* a_n )\big)
\]
if  $n\geq1$, and $\mu_n=0$ otherwise.

\begin{lemma}\label{prop:2}
$\mu_n$ is a bijective chain map. 
\end{lemma}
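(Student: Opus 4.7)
The plan is to exhibit an explicit inverse to $\mu_n$ and then verify that $\mu_n$ intertwines the two boundary operators. Throughout, I write $y_j := x * a_j$, which makes sense in $X$, and I use strong connectedness so that $x \darrow y$ is defined for every $x,y \in X$.

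\textbf{Step 1 (Inverse).} Define $\nu_n : C_n^{\rm lb}(X) \to C_n^{\rm sb}(B,X)$ on generators by
\[
\nu_n\big(((x,y_1),\ldots,(x,y_n))\big) = (x,\, x \darrow y_1,\, \ldots,\, x \darrow y_n).
\]
Lemma~\ref{lem:1}(4) gives $x \darrow (x * a) = a$, and Lemma~\ref{lem:1}(3) gives $x * (x \darrow y) = y$ (using part (1)), so $\nu_n \circ \mu_n = \mathrm{id}$ and $\mu_n \circ \nu_n = \mathrm{id}$ at the level of $C^{\rm sb}$ and $C^{\rm lb}$.

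\textbf{Step 2 (Descent to quotients).} Since $X$ is strongly connected, the map $x* : B \to X$ is bijective, so $a_i = a_{i+1}$ if and only if $x * a_i = x * a_{i+1}$. Hence $\mu_n$ carries $D_n^{\rm sb}(B,X)$ bijectively onto $D_n^{\rm lb}(X)$ (and likewise for $\nu_n$), so $\mu_n$ descends to a bijective homomorphism $C_n^{\rm SB}(B,X) \to C_n^{\rm LB}(X)$.

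\textbf{Step 3 (Chain-map identity).} It remains to check $\partial_n^{\rm LB} \circ \mu_n = \mu_{n-1} \circ \partial_n^{\rm SB}$ on a generator $(x,a_1,\ldots,a_n)$. The "omit-$a_i$" summands of $\partial_n^{\rm SB}$ map under $\mu_{n-1}$ to $((x,y_1),\ldots,\widehat{(x,y_i)},\ldots,(x,y_n))$, matching the first half of $\partial_n^{\rm LB}\mu_n$. For the second half, applying $\mu_{n-1}$ to
\[
(x * a_i,\ a_1 \uline{*} a_i,\ \ldots,\ a_{i-1}\uline{*} a_i,\ a_{i+1}\oline{*} a_i,\ \ldots,\ a_n \oline{*} a_i)
\]
yields tuples of the form $(y_i,\ (x*a_i) * (a_j \uline{*} a_i))$ for $j < i$ and $(y_i,\ (x*a_i)*(a_j \oline{*} a_i))$ for $j > i$. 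Using $x \darrow y_k = a_k$ (Lemma~\ref{lem:1}(4)), the two equivalent formulas for the corresponding horizontal-tribracket in Theorem~\ref{thm:1} give
\[
[x,y_j,y_i] = y_i * \big((x\darrow y_j) \uline{*} (x \darrow y_i)\big) = (x*a_i)*(a_j \uline{*} a_i),
\]
\[
[x,y_i,y_j] = y_i * \big((x\darrow y_j) \oline{*} (x \darrow y_i)\big) = (x*a_i)*(a_j \oline{*} a_i),
\]
which are exactly the second components appearing in $\partial_n^{\rm LB}((x,y_1),\ldots,(x,y_n))$. The signs and the indices $i,j$ match summand by summand, finishing the verification.

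The only real content is in Step~3, and even there the identification is essentially forced: the definition of $[\,]$ in Theorem~\ref{thm:1} was built precisely so that $(x*a_i)*(a_j \uline{*} a_i)$ and $(x*a_i)*(a_j \oline{*} a_i)$ become tribracket values of the $y_k$'s once one applies Lemma~\ref{lem:1}(4). So the main potential obstacle is purely bookkeeping --- keeping the two cases $j<i$ and $j>i$ aligned with the correct choice between the $\uline{\star}$- and $\oline{\star}$-formulas of the local biquandle boundary --- rather than any genuinely new computation.
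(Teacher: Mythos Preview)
Your proof is correct and follows essentially the same route as the paper: the paper also builds the explicit inverse $\eta_n((x,y_1),\ldots,(x,y_n)) = (x, x\darrow y_1,\ldots,x\darrow y_n)$, checks that both $\mu_n$ and $\eta_n$ respect the degenerate submodules, and verifies the chain-map identity by the same termwise tribracket computation using Lemma~\ref{lem:1}. One tiny slip: the identity $x*(x\darrow y)=y$ is Lemma~\ref{lem:1}(1) directly, not (3).
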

\begin{proof}
It is sufficient to consider the cases that $n\geq 1$.

We first show that $\mu_n$ is well-defined. 
For $(x,a_1, \ldots , a_n)\in X \times B^n$, suppose that $a_i = a_{i+1}$ for some $i\in \{1, \ldots, n-1\}$. 
We then have 
\[
\begin{array}{ll}
\mu_n\big((x,a_1, \ldots , a_n)\big) &= \big( (x, x* a_1 ), \ldots, (x, x* a_i ) , (x, x* a_{i+1} ), \ldots  , (x, x* a_n )\big)\\[3pt]
&= \big( (x, x* a_1 ), \ldots, (x, x* a_i ) , (x, x* a_{i} ), \ldots  , (x, x* a_n )\big).
\end{array}
\]
This implies $\mu_n$ is well-defined since $\mu_n\big(D_n^{\rm sb}(B, X) \big)\subset D_n^{\rm lb}(X)$ when we regard $\mu_n$ as a homomorphism from $C_n^{\rm sb}(B, X)$ to $C_n^{\rm lb}(X)$.

Next we show that $\mu_n$ is bijective. 
Define a homomorphism $\eta_n:  C_n^{\rm LB} (X) \to C_n^{\rm SB} (B, X) $ by 
\[
\eta_n\Big(\big( (x, y_1), \ldots , (x, y_n) \big)\Big) = (x, x\darrow y_1, \ldots , x\darrow y_n)
\]
if $n\geq 1$, and $\eta_n=0$ otherwise.
Then if $y_i=y_{i+1}$ for some $i\in \{1,\ldots , n-1\}$, 
\[
\begin{array}{l}
\eta_n\Big(\big( (x, y_1), \ldots, (x, y_i), (x, y_{i+1}=y_{i}), \ldots  , (x, y_n) \big)\Big) \\[3pt]
= (x, x\darrow y_1, \ldots ,x\darrow y_i, x\darrow y_{i+1} = x\darrow y_{i} , \ldots,   x\darrow y_n).
\end{array}
\] 
This implies that $\eta_n$ is well-defined since $\eta_n\big( D_n^{\rm lb} (X)\big) \subset D_{n}^{\rm sb}(B,X)$ when we regard $\eta_n$ as a homomorphism from $C_n^{\rm lb} (X)$ to $C_{n}^{\rm sb}(B,X)$.
For $n\geq 1$,  we have 
\[
\begin{array}{lcl}
\eta_n\circ \mu_n\big((x,a_1, \ldots , a_n)\big) &=&\eta_n \Big(\big( (x, x* a_1 ), \ldots  , (x, x* a_n )\big)\Big)\\[3pt]
&=&\big( x, x \darrow (x*a_1) , \ldots ,  x \darrow (x*a_n)  \big)\\[2pt]
&\overeq{Lem.}{lem:1}& \big( x, a_1, \ldots , a_n \big), 
\end{array}
\]
and 
\[
\begin{array}{lcl}
\mu_n \circ \eta_n\Big(\big( (x, y_1), \ldots , (x, y_n) \big)\Big) &=&\mu_n \Big((x, x\darrow y_1, \ldots , x\darrow y_n)\Big)\\[3pt]
&=&\Big( \big(x, x * (x\darrow y_1) \big) , \ldots ,  \big(x, x * (x\darrow y_n) \big) \Big)\\[2pt]
&\overeq{Lem.}{lem:1}&\big( (x, y_1), \ldots , (x, y_n) \big).
\end{array}
\]
Hence $\eta_n$ is the inverse map of $\mu_n$, and thus,  $\mu_n$ is bijective.

Lastly, we show that $\mu_n$ is a chain map. 
We have 
\begin{align}
&\mu_{n-1} \circ \partial_n^{\rm SB}\big( (x, a_1, \ldots , a_n )\big) \notag \\
&= \mu_{n-1} \Big( \sum_{i=1}^{n} (-1)^i \big( x, a_1, \ldots , a_{i-1}, a_{i+1}, \ldots , a_n \big) \notag \\
& \hspace{0.4cm}+\sum_{i=1}^{n} (-1)^{i+1} \big( x * a_i, a_1 \uline{*} a_i, \ldots , a_{i-1} \uline{*} a_i , a_{i+1}\oline{*} a_i, \ldots , a_n \oline{*} a_i \big) \Big)  \notag \\
&=\sum_{i=1}^{n} (-1)^i \big( (x, x*a_1), \ldots , (x, x*a_{i-1}), (x, x*a_{i+1}), \ldots , (x, x*a_n) \big) \\
& \hspace{0.4cm}+\sum_{i=1}^{n} (-1)^{i+1} \Big( \big( x * a_i, (x * a_i) *(a_1 \uline{*} a_i) \big), \ldots , \big( x * a_i, (x * a_i) *(a_{i-1} \uline{*} a_i) \big),  \notag \\ &\hspace{2.3cm}  \big( x * a_i, (x * a_i) *(a_i \oline{*} a_{i+1}) \big), \ldots , 
\big( x * a_i, (x * a_i) *(a_i \oline{*} a_{n}) \big)  \Big)
\end{align}
and 
\begin{align}
& \partial_n^{\rm LB} \circ \mu_n \big( (x, a_1, \ldots , a_n )\big) \notag \\
&= \partial_n^{\rm LB} \Big(\big( (x, x* a_1 ), \ldots , (x, x* a_n )\big) \Big) \notag\\
&=\sum_{i=1}^{n} (-1)^i \big( (x, x*a_1), \ldots , (x, x*a_{i-1}), (x, x*a_{i+1}), \ldots , (x, x*a_n) \big) \\
& \hspace{0.4cm}+\sum_{i=1}^{n} (-1)^{i+1} \big((x, x* a_1 ) \uline{\star} (x, x* a_{i} ), \ldots ,   (x, x* a_{i-1} ) \uline{\star} (x, x* a_{i} ), \notag\\
 &\hspace{2.7cm}  (x, x* a_{i+1} ) \oline{\star} (x, x* a_{i} ), \ldots , (x, x* a_{n} ) \oline{\star} (x, x* a_{i} ) \big).
\end{align}
We can easily see that the terms (1) coincide with the terms (3).
The terms (2) coincide with the terms (4) because 
for $1\leq j < i$, it holds that
\[
\begin{array}{lcl}
(x, x* a_j ) \uline{\star} (x, x* a_{i} ) &=& \big(x* a_{i}, [x, x* a_j , x* a_{i}]\big) \\[3pt]
&=& \Big(x* a_{i},   (x* a_{i})* \big((x \darrow (x* a_j) ) \uline{*} (x \darrow (x* a_{i}) )\big) \Big) \\[2pt]
&\overeq{Lem.}{lem:1}& \big(x* a_{i},   (x* a_{i})* (a_j \uline{*} a_{i} )\big),
\end{array}
\]
and for $i<j \leq n$, it holds that 
\[
\begin{array}{lcl}
 (x, x* a_{j} ) \oline{\star} (x, x* a_{i} ) &=& \big(x* a_{i}, [x,  x* a_{i}, x* a_j]\big) \\[3pt]
&=& \Big(x* a_{i},   (x* a_{i})* \big((x \darrow (x* a_j) ) \oline{*} (x \darrow (x* a_{i}) )\big) \Big) \\[2pt]
&\overeq{Lem.}{lem:1}& \big(x* a_{i},   (x* a_{i})* (a_j \oline{*} a_{i} )\big).
\end{array}
\]
Therefore we have 
\[
\mu_{n-1} \circ \partial_n^{\rm SB} = \partial_n^{\rm LB} \circ \mu_n, 
\]
and thus, $\mu_n$ is a chain map.

This completes the proof.
\end{proof}

The bijective chain map $\mu_n$ induces  an isomorphism $\mu_n^*:  H_n^{\rm SB} (B, X)  \to H_n^{\rm LB} (X)$ defined by 
\[
\mu_n^*\Big(\big[(x,a_1, \ldots , a_n)\big]\Big) = \Big[\mu_n \big( (x,a_1, \ldots , a_n)\big)\Big]
\]
if  $n\geq1$, and $\mu_n^*=0$ otherwise.

Moreover, for an abelian group $A$, 
 the bijective chain map $\mu_n$ induces  the bijective chain map $\mu_n \otimes {\rm id}:   C_n^{\rm SB} (B, X; A)  \to C_n^{\rm LB} (X; A)$, and hence, we have an isomorphism 
$(\mu_n \otimes {\rm id} )^* :   H_n^{\rm SB} (B, X; A)  \to H_n^{\rm LB} (X; A)$. 
The bijective cochain map $\mu_n$ induces  the bijective cochain map $\mu^n:   C^n_{\rm LB} (X; A) \to C^n_{\rm SB} (B, X; A)$ defined by $\mu^n (f) = f \circ \mu_n$, and hence, we have an isomorphism 
$\mu^n_*: H^n_{\rm LB} (X; A) \to H^n_{\rm SB} (B, X; A)$. 
Thus we have the following theorem:
\begin{theorem}\label{thm:2}
Let $(B, X)$ be a shadow biquandle such that $X$ is strongly connected, and  $(X, \{ \uline{\star}\} , \{ \oline{\star}\})$ be the corresponding local biquandle of $(B,X)$. Let $A$ be an abelian group. 
Then for any $n\in \mathbb Z$, we have 
\[
H_n^{\rm SB} (B, X; A)  \cong H_n^{\rm LB} (X; A) \mbox{\   and  \ } 
H^n_{\rm SB} (B, X; A)  \cong H^n_{\rm LB} (X; A).
\]
\end{theorem}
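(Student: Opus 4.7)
The plan is to deduce Theorem~\ref{thm:2} as a formal consequence of Lemma~\ref{prop:2}, which has already done the real work by establishing that $\mu_n : C_n^{\rm SB}(B,X) \to C_n^{\rm LB}(X)$ is a bijective chain map that respects the degenerate submodules. Given this, the theorem is essentially a packaging statement: a chain isomorphism between free $\mathbb Z$-modules induces isomorphisms on all associated (co)homology groups with arbitrary coefficients.

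First I would tensor with the abelian group $A$. Since $\mu_n$ is a chain map between free $\mathbb Z$-modules, the map $\mu_n \otimes \mathrm{id}_A : C_n^{\rm SB}(B,X;A) \to C_n^{\rm LB}(X;A)$ is automatically a chain map, and it is bijective because $\mu_n$ is (its inverse being $\eta_n \otimes \mathrm{id}_A$, where $\eta_n$ is the explicit inverse constructed in the proof of Lemma~\ref{prop:2}). Passing to homology yields the induced isomorphism
\[
(\mu_n \otimes \mathrm{id}_A)_\ast : H_n^{\rm SB}(B,X;A) \xrightarrow{\ \cong\ } H_n^{\rm LB}(X;A)
\]
for every $n \in \mathbb Z$, which gives the first half of the theorem.

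For the cohomological half, I would apply the contravariant functor $\mathrm{Hom}(-,A)$ to $\mu_n$ to obtain $\mu^n : C_{\rm LB}^n(X;A) \to C_{\rm SB}^n(B,X;A)$ defined by $\mu^n(f) = f \circ \mu_n$. Checking $\delta^n_{\rm SB} \circ \mu^n = \mu^{n+1} \circ \delta^n_{\rm LB}$ is routine: it comes down to $(f \circ \mu_n) \circ \partial_{n+1}^{\rm SB} = f \circ \partial_{n+1}^{\rm LB} \circ \mu_{n+1}$, which is exactly the chain-map identity $\mu_n \circ \partial_{n+1}^{\rm SB} = \partial_{n+1}^{\rm LB} \circ \mu_{n+1}$ precomposed with nothing. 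Bijectivity of $\mu^n$ follows from that of $\mu_n$ (with inverse $g \mapsto g \circ \eta_n$), so $\mu^n$ is a cochain isomorphism, and passing to cohomology gives the isomorphism $H_{\rm LB}^n(X;A) \cong H_{\rm SB}^n(B,X;A)$.

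There is no real obstacle left here: the hard content, namely verifying that $\mu_n$ is well-defined on the quotient (preservation of degeneracies), bijective, and commutes with the boundary (the latter relying on the identities in Lemma~\ref{lem:1} that unwind $x \darrow (x*a_j) = a_j$ inside the local biquandle boundary formula), has already been carried out in Lemma~\ref{prop:2}. The only thing to double-check in writing the theorem's proof is that the constructions with coefficients $A$ are functorial, which is standard homological algebra. Consequently the proof of Theorem~\ref{thm:2} can be presented in just a few lines that invoke Lemma~\ref{prop:2} and the above two formal steps.
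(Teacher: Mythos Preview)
Your proposal is correct and matches the paper's own argument essentially line for line: the paper also derives Theorem~\ref{thm:2} directly from Lemma~\ref{prop:2} by noting that the bijective chain map $\mu_n$ induces a bijective chain map $\mu_n\otimes{\rm id}$ on chains with coefficients in $A$ and a bijective cochain map $\mu^n(f)=f\circ\mu_n$ on cochains, and then passes to (co)homology. There is nothing to add.
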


\subsection{Correspondence between cocycle invariants of links}
 Let $(B, X,  \uline{*}, \oline{*}, *)$ be a shadow biquandle such that $X$ is strongly connected.
Let $[\,] :X^3\to X$ be the corresponding horizontal-tribracket of $(B, X)$, that is, it is defined by 
\[
[x,y,z] = y *\big( (x \darrow  z) \oline{*} (x \darrow  y) \big)= z * \big( (x \darrow  y) \uline{*} (x \darrow  z) \big).
\] 
Let $(X, \{ \uline{\star}\} , \{ \oline{\star}\})$ be the corresponding local biquandle of $(B,X)$, that is, it is the local biquandle associated with the above $(X,[\, ])$.

Let $D$ be a connected  diagram of a link $L$. 
\begin{lemma}\label{prop:translation1}
There exists a bijection $ T: {\rm Col}_{(B,X)}^{\rm SB} (D) \to {\rm Col}_{X^2}^{\rm LB} (D)$.
\end{lemma}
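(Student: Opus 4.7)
The plan is to define $T$ by recording, for each semi-arc of $D$, the ordered pair of region colors on its two sides. Given a $(B,X)$-coloring $C$ and a semi-arc $s\in\mathcal{SA}(D)$ whose normal vector points from a region $r_1$ to a region $r_2$, I set $T(C)(s):=(C(r_1),C(r_2))\in X^2$. The semi-arc color $C(s)\in B$ is then recovered from this pair by Lemma~\ref{lem:1} as $C(s)=C(r_1)\darrow C(r_2)$.

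The first task is to verify that $T(C)$ satisfies the crossing axiom of Definition~\ref{def:localbiquandlecolor}. At a crossing in the notation of Figure~\ref{coloring6}, write $x:=C(r_1)$ for the color of the unique region bordered by the normal-source ends of both $u_1$ and $o_1$, and set $a:=C(u_1)$, $b:=C(o_1)$. The four region colors around the crossing are then $x$, $x*a$, $x*b$ and $(x*a)*(b\oline{*}a)=(x*b)*(a\uline{*}b)$, the last equality being the $B$-set compatibility axiom. The matching of first coordinates of $T(C)(u_1)$ and $T(C)(o_1)$ is immediate, and the required identities
\[
T(C)(u_2)=(x,x*a)\uline{\star}(x,x*b),\qquad T(C)(o_2)=(x,x*b)\oline{\star}(x,x*a)
\]
both reduce to the single equation $(x*a)*(b\oline{*}a)=[x,\,x*a,\,x*b]$, which follows from the tribracket formula in Theorem~\ref{thm:1} together with $x\darrow(x*a)=a$ and $x\darrow(x*b)=b$ from Lemma~\ref{lem:1}.

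For the inverse, given $C'\in{\rm Col}_{X^2}^{\rm LB}(D)$, I assign to each semi-arc $s$ with $C'(s)=(\alpha,\beta)$ the $B$-color $T^{-1}(C')(s):=\alpha\darrow\beta$, and to each region $r$ the $X$-color read off from any bordering semi-arc (the source-side coordinate if $r$ sits on the source side of that semi-arc, otherwise the target-side coordinate). Well-definedness of the region coloring is the heart of the argument: at every crossing the $X^2$-coloring axiom together with the tribracket formulas forces the four corner regions to carry consistent labels, and traversing the boundary of any region the consistency propagates through successive crossings. Then $T^{-1}(C')$ satisfies the region-semi-arc compatibility $C(r_1)*C(s)=C(r_2)$ by $\alpha*(\alpha\darrow\beta)=\beta$ (Lemma~\ref{lem:1}), while the biquandle crossing conditions $C(u_2)=C(u_1)\uline{*}C(o_1)$ and $C(o_2)=C(o_1)\oline{*}C(u_1)$ follow by applying $\darrow$ to the two expressions
\[
[x,y,z]=z*((x\darrow y)\uline{*}(x\darrow z))=y*((x\darrow z)\oline{*}(x\darrow y))
\]
of Theorem~\ref{thm:1} and using $\alpha\darrow(\alpha*c)=c$. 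Finally, $T\circ T^{-1}=\mathrm{id}$ and $T^{-1}\circ T=\mathrm{id}$ follow from the identities $\alpha*(\alpha\darrow\beta)=\beta$ and $\alpha\darrow(\alpha*a)=a$ of Lemma~\ref{lem:1}.

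The step I expect to require the most care is the well-definedness of the region coloring under $T^{-1}$, as this is the only point at which the combinatorics of the diagram interacts nontrivially with the algebraic axioms; the remainder of the argument is a direct unpacking of the definition of $T$, the tribracket $[\,]$ from Theorem~\ref{thm:1}, and the shadow biquandle coloring axioms.
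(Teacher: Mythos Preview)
Your proposal is correct and follows essentially the same approach as the paper: both define $T$ by sending each semi-arc to the ordered pair of its adjacent region colors, construct $T^{-1}$ by assigning $x\darrow y$ to the semi-arc and reading region colors off the coordinates, and verify the coloring axioms via the tribracket formula of Theorem~\ref{thm:1} together with Lemma~\ref{lem:1}. You are in fact slightly more careful than the paper in flagging the well-definedness of the region assignment under $T^{-1}$; the paper leaves this implicit, and your remark that local consistency at each crossing propagates along the (connected, since $D$ is a connected diagram) boundary of every region is exactly what is needed to close that gap.
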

\begin{proof}
We set a map 
$T: {\rm Col}_{(B,X)}^{\rm SB} (D) \to {\rm Col}_{X^2}^{\rm LB} (D)$ as follows:
Let $C \in {\rm Col}_{(B,X)}^{\rm SB} (D)$. 
For a semi-arc $s$ whose normal vector points from a  region $r_1$ to a region $r_2$ as shown in the right of Figure~\ref{coloring6}, we assign $(x,y)$ to the semi-arc $s$, where $x=C(r_1)$ and $y=C(r_2)$, see also Figure~\ref{coloring7}.
Then the assignment determines an $X^2$-coloring $C'=T(C) \in {\rm Col}_{X^2}^{\rm LB} (D)$.
Indeed, since $$w=y*\big( b \oline{*} a \big)  =y*\big( (x\darrow z) \oline{*}(x\darrow y)\big)=[x,y,z]  
$$  
for a crossing of $(D,C)$ as shown in the left of Figure~\ref{coloring8}, 
for the same crossing of $(D,C')$ as shown in the right of Figure~\ref{coloring8}, the conditions of a local biquandle coloring in Definition~\ref{def:localbiquandlecolor} hold as follows:
\[
\begin{array}{l}
C'(u_1) \uline{\star} C'(o_1) = (x,y) \uline{\star} (x,z) = (z, [x,y,z])=(z,w) = C'(u_2), \mbox{ and} \\[3pt]
C'(o_1) \oline{\star} C'(u_1) = (x,z) \oline{\star} (x,y) = (y, [x,y,z]) =(y,w) = C'(o_2).
\end{array}
\]

The inverse map $T^{-1} : {\rm Col}_{X^2}^{\rm LB} (D) \to {\rm Col}_{(B,X)}^{\rm SB} (D)$ is defined as follows: 
Let $C'\in {\rm Col}_{X^2}^{\rm LB} (D)$. 
For a semi-arc $s$  whose normal vector points from a  region $r_1$ to a region $r_2$ as shown in the right of Figure~\ref{coloring6}, we assign $x$ to the region $r_1$, $y$ to the region $r_2$, and $x\darrow y$ to the semi-arc $s$, where $C'(s)=(x,y)$, see also Figure~\ref{coloring7}.
Then the assignment determines a $(B,X)$-coloring $C\in {\rm Col}_{(B,X)}^{\rm SB} (D)$. 
Indeed, since $w=[x,y,z]$ for a crossing of $(D,C')$ as shown in the right of Figure~\ref{coloring8}, 
for the same crossing of $(D,C)$ as shown in the left of Figure~\ref{coloring8}, the conditions of a shadow biquandle coloring in Definition~\ref{def:coloringshadow} hold as follows:
\[
\begin{array}{l}
C(u_1) \uline{*} C(o_1) = (x\darrow y) \uline{*} (x \darrow  z)\overeq{Lem.}{lem:1}z\darrow \big(z* \big( (x\darrow y) \uline{*} (x \darrow  z)\big) \big) \\[3pt]
= z\darrow  [x,y,z]=  z\darrow  w= C(u_2),  \mbox{ and }\\[3pt]
C(o_1) \oline{*} C(u_1) = (x\darrow z) \oline{*} (x\darrow y)\overeq{Lem.}{lem:1}y\darrow  \big(y*\big((x\darrow z) \oline{*} (x\darrow y)\big)\big)\\[3pt] = y\darrow  [x,y,z] =  y\darrow  w = C(o_2).
\end{array}
\]
For a semi-arc of $(D,C)$ as shown in the right of Figure~\ref{coloring6}, 
\[
C(r_1) * C(s) = x * (x\darrow y) \overeq{Lem.}{lem:1} y = C(r_2),
\]
and thus, the condition of a shadow biquandle coloring around each semi-arc also holds.

Therefore $T$ is bijective. 

\begin{figure}[ht]
  \begin{center}
    \includegraphics[clip,width=5.0cm]{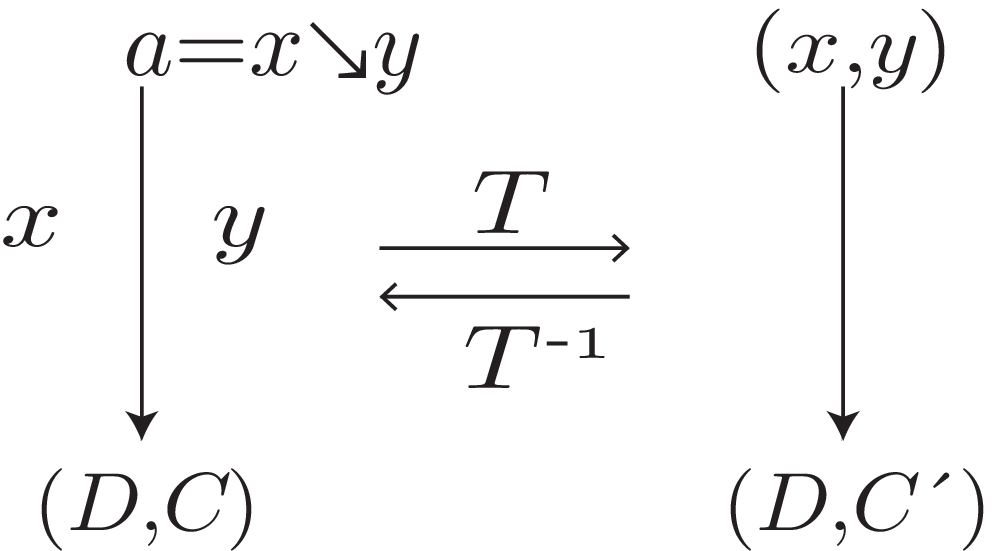}
    \caption{}
    \label{coloring7}
  \end{center}
\end{figure}
\begin{figure}[ht]
  \begin{center}
    \includegraphics[clip,width=8.0cm]{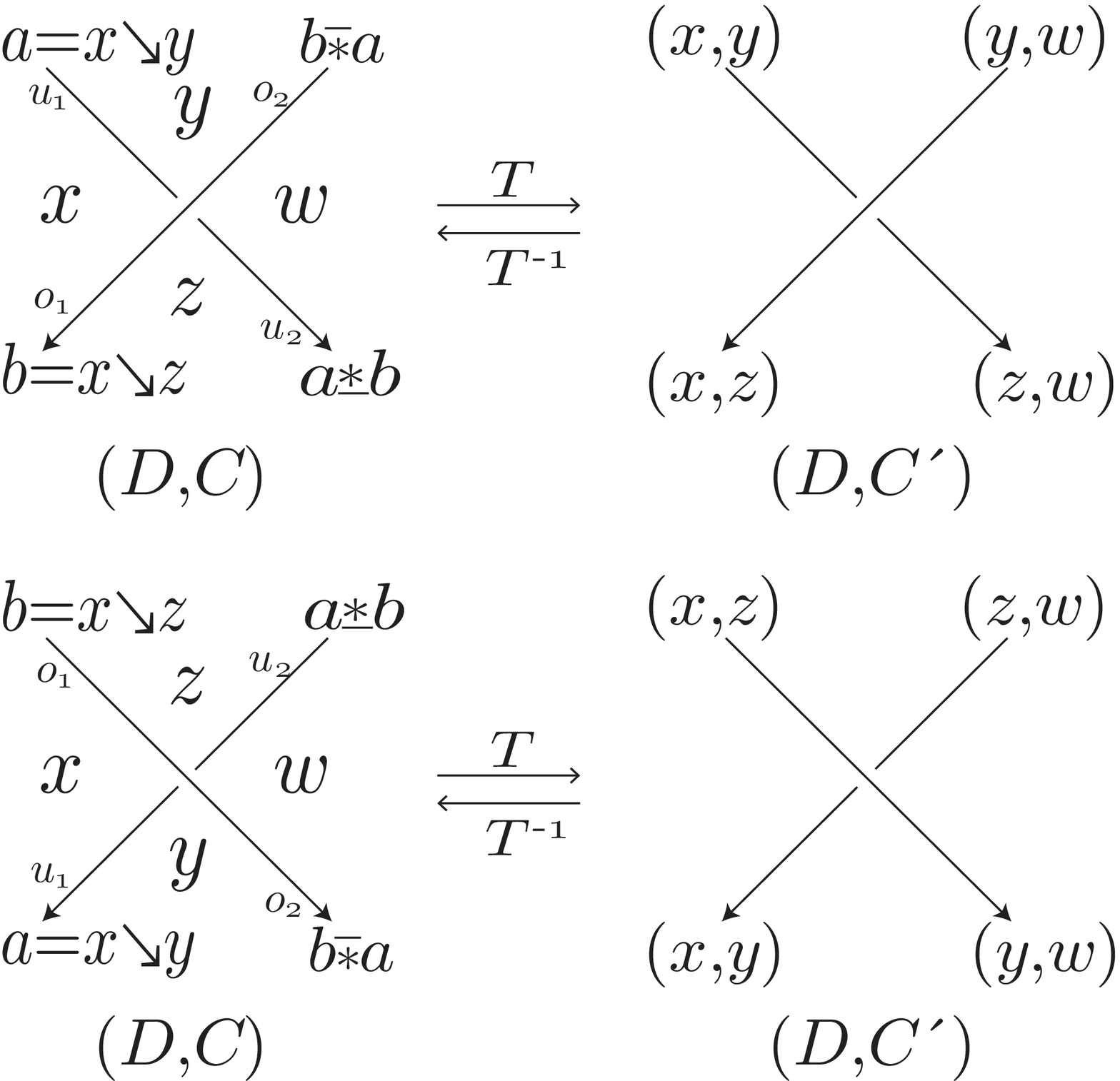}
    \caption{}
    \label{coloring8}
  \end{center}
\end{figure}
\end{proof}

%%%%%%%%%%%%%%%%%%%%%%%%
\noindent We continue to use the bijection $T: {\rm Col}_{(B,X)}^{\rm SB}(D) \to {\rm Col}_{X^2}^{\rm LB}(D)$. 

Let  $\mu_2:  C_2^{\rm SB} (B, X)  \to C_2^{\rm LB} (X)$ be the bijective chain map defined in Subsection~\ref{subsec:Correspondence between (co)homology groups}, that is, it is defined by 
\[
\mu_2\big((x, a_1, x_2)\big) = \big( (x, x*a_1), (x, x*a_2)\big).
\]
We note that the inverse map $\mu^{-1}_2 (=\eta_2 \mbox{ in Subsection~\ref{subsec:Correspondence between (co)homology groups}})$ of $\mu_2$ is defined by 
\[
\mu_2^{-1} \Big( \big((x,y), (x,z)\big) \Big) = (x, x \darrow y, x \darrow z).
\]
Let $C\in {\rm Col}_{(B,X)}^{\rm SB}(D)$ and $C'\in {\rm Col}_{X^2}^{\rm LB}(D)$ such that $T(C)=C'$.
At a crossing  $\chi$ of $D$ as depicted in Figure~\ref{coloring6}, we have 
\begin{align*}
w^{\rm LB}(D, C'; \chi) 
&={\rm sign}(\chi) \big((x,y), (x,z)\big)\\
&=\mu_2 \Big({\rm sign}(\chi)\big(x, x \darrow y, x \darrow z \big)\Big)\\
&=\mu_2\big(w^{\rm SB}(D, C; \chi)\big),
\end{align*}
where $C'(u_1) = (x,y)$ and $C'(o_1)=(x,z)$, see also Figure~\ref{coloring8}.
This implies that $W^{\rm LB}(D, C') =\mu_2\big(W^{\rm SB}(D, C)\big)$. Thus we have 
\begin{align*}
\mathcal{H}^{\rm LB}(D)&=\Big\{\big[W^{\rm LB}(D, C')\big]  ~\Big|~ \ C' \in {\rm Col}_{X^2}^{\rm LB} (D) \Big\}\\
&=\Big\{\mu_2^{\ast}\big(\big[W^{\rm SB}(D, C)\big]\big) ~\Big|~ \ C \in {\rm Col}_{(B,X)}^{\rm SB} (D) \Big\}\\
&=\mu_2^{\ast}\big(\mathcal{H}^{\rm SB}(D)\big).
\end{align*}
We note that since $\mu_2^{*}$ is an isomorphism, $\mathcal{H}^{\rm SB}(D)=(\mu_2^{\ast})^{-1}\big( \mathcal{H}^{\rm LB}(D) \big)$ holds.
This implies that as link invariants, $\mathcal{H}^{\rm SB}(L)$ and $\mathcal{H}^{\rm LB}(L)$ are the same.

Let $A$ be an abelian group.
Let   $\theta \in Z^2_{\rm SB}(B, X;A)$ and $\theta' \in Z^2_{\rm LB}(X;A)$ such that $\theta = \theta' \circ \mu_2$.
We then have  
\[
\theta \big(w^{\rm SB}(D, C; \chi)\big)= \theta' \circ \mu_2 \big(w^{\rm SB}(D, C; \chi)\big)
=\theta' \big(w^{\rm LB}(D, C'; \chi)\big)
\]
for each crossing $\chi$, and thus,  $\theta \big(W^{\rm SB}(D, C)\big)=\theta'(W^{\rm LB}(D, C'))$ holds. 
This implies that $\Phi_{\theta}^{\rm SB}(L) =\Phi_{\theta'}^{\rm LB}(L)$. 

As a consequence, we have the following theorem:
\begin{theorem}\label{thm:3}
Let $L$ be a link.
Let $(B, X)$ be a shadow biquandle such that $X$ is strongly connected, and  $(X, \{ \uline{\star}\} , \{ \oline{\star}\})$ be the corresponding local biquandle of $(B,X)$. 
Then we have 
\[
\mathcal{H}^{\rm LB}(L)=\mu_2^* \big( \mathcal{H}^{\rm SB}(L)\big) \mbox{ \ \ and \ \  } 
(\mu_2^*)^{-1}\big( \mathcal{H}^{\rm LB}(L)\big) =\mathcal{H}^{\rm SB}(L).
\]
Moreover for an abelian group $A$, let $\theta \in Z^2_{\rm SB}(B, X;A)$ and $\theta' \in Z^2_{\rm LB}(X;A)$ such that $\theta = \theta' \circ \mu_2$.
Then we have 
$$\Phi_{\theta}^{\rm SB}(L) =\Phi_{\theta'}^{\rm LB}(L).$$
\end{theorem}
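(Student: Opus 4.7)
The plan is to combine the bijection $T\colon \mathrm{Col}^{\mathrm{SB}}_{(B,X)}(D)\to \mathrm{Col}^{\mathrm{LB}}_{X^2}(D)$ of Lemma~\ref{prop:translation1} with the chain-level isomorphism $\mu_2$ of Lemma~\ref{prop:2}, and to transfer weight data one crossing at a time. Fix a connected diagram $D$ of $L$, pick any $C\in \mathrm{Col}^{\mathrm{SB}}_{(B,X)}(D)$, and set $C'=T(C)\in \mathrm{Col}^{\mathrm{LB}}_{X^2}(D)$. At a crossing $\chi$ with base region coloured $x$, under-semi-arc $u_1$ coloured $a$, and over-semi-arc $o_1$ coloured $b$ by $C$, the definition of $T$ (see Figures~\ref{coloring7} and~\ref{coloring8}) labels $u_1$ and $o_1$ under $C'$ by $(x, x*a)$ and $(x, x*b)$ respectively. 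Unwinding the definitions of the local weights then gives
\[
w^{\mathrm{LB}}(D,C';\chi) = \mathrm{sign}(\chi)\bigl((x,x*a),(x,x*b)\bigr) = \mu_2\bigl(\mathrm{sign}(\chi)(x,a,b)\bigr) = \mu_2\bigl(w^{\mathrm{SB}}(D,C;\chi)\bigr).
\]

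Summing over crossings and using linearity of $\mu_2$ yields $W^{\mathrm{LB}}(D,C') = \mu_2\bigl(W^{\mathrm{SB}}(D,C)\bigr)$. Since $\mu_2$ is a chain map, passing to homology gives $\bigl[W^{\mathrm{LB}}(D,C')\bigr] = \mu_2^{*}\bigl(\bigl[W^{\mathrm{SB}}(D,C)\bigr]\bigr)$. As $C$ ranges over $\mathrm{Col}^{\mathrm{SB}}_{(B,X)}(D)$ and $T$ is a bijection, $C'$ simultaneously ranges over all of $\mathrm{Col}^{\mathrm{LB}}_{X^2}(D)$, so the multisets $\mathcal{H}^{\mathrm{SB}}(D)$ and $\mathcal{H}^{\mathrm{LB}}(D)$ correspond under $\mu_2^{*}$. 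This establishes the first identity; applying the inverse $(\mu_2^{*})^{-1}$, which exists by Lemma~\ref{prop:2}, gives the second.

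For the cocycle statement, for each crossing $\chi$ we apply $\theta=\theta'\circ\mu_2$ to the local weight to obtain
\[
\theta\bigl(w^{\mathrm{SB}}(D,C;\chi)\bigr) = \theta'\bigl(\mu_2(w^{\mathrm{SB}}(D,C;\chi))\bigr) = \theta'\bigl(w^{\mathrm{LB}}(D,C';\chi)\bigr).
\]
Summing over the finitely many crossings of $D$ gives $\theta\bigl(W^{\mathrm{SB}}(D,C)\bigr) = \theta'\bigl(W^{\mathrm{LB}}(D,C')\bigr)$, and then the bijection $T$ turns this into the multiset equality $\Phi^{\mathrm{SB}}_{\theta}(L) = \Phi^{\mathrm{LB}}_{\theta'}(L)$. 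The only place that does real work is the translation lemma, Lemma~\ref{prop:translation1}; once it and the chain-map property of $\mu_2$ are in hand, the theorem reduces to tracking labels across the bijection $T$ one crossing at a time, so the main obstacle (verifying the crossing-by-crossing identity $w^{\mathrm{LB}}(D,C';\chi) = \mu_2(w^{\mathrm{SB}}(D,C;\chi))$) is really just a direct unpacking of the definitions of $T$, $\mu_2$, and the two local-weight assignments.
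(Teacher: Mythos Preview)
Your argument is correct and follows essentially the same route as the paper: use the bijection $T$ from Lemma~\ref{prop:translation1} together with the chain isomorphism $\mu_2$ from Lemma~\ref{prop:2}, verify the crossing-by-crossing identity $w^{\rm LB}(D,C';\chi)=\mu_2\bigl(w^{\rm SB}(D,C;\chi)\bigr)$, and then sum and pass to homology or evaluate with $\theta=\theta'\circ\mu_2$. The only cosmetic difference is that the paper phrases the local computation starting from $C'$ (writing $w^{\rm SB}$ as $(x,x\!\searrow\! y,x\!\searrow\! z)$ and applying $\mu_2$), whereas you start from $C$ and apply $\mu_2$ to $(x,a,b)$ directly; these are the same identity read in opposite directions.
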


%%%%%%%%%%

%%%%%%%%%%%%%%%%%%%%%
%%%%%%%%%%%%%%%%%%%%%
%%%%%%%%%%%%%%%%%%%%%
%%%%%%%%%%%%%%%%%%%%%
%%%%%%%%%%%%%%%%%%%%%
%%%%%%%%%%%%%%%%%%%%%
%%%%%%%%%%%%%%%%%%%%%
%%%%%%%%%%%%%%%%%%%%%

\subsection{Correspondence between cocycle invariants of surface-links}
 Let $(B, X,  \uline{*}, \oline{*}, *)$ be a shadow biquandle such that $X$ is strongly connected.
Let $[\,] :X^3\to X$ be the corresponding horizontal-tribracket of $(B, X)$, that is, it is defined by 
\[
[x,y,z] = y *\big( (x \darrow  z) \oline{*} (x \darrow  y) \big)= z * \big( (x \darrow  y) \uline{*} (x \darrow  z) \big).
\] 
Let $(X, \{ \uline{\star}\} , \{ \oline{\star}\})$ be the corresponding local biquandle of $(B,X)$, that is, it is the local biquandle associated with the above $(X,[\, ])$.

Let $D$ be a connected diagram of a surface-link $F$. 
\begin{lemma}
There exists a bijection $ T: {\rm Col}_{(B,X)}^{\rm SB} (D) \to {\rm Col}_{X^2}^{\rm LB} (D)$.
\end{lemma}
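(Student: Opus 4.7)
The proof will parallel the proof of Lemma~\ref{prop:translation1} in the link case almost verbatim, since at each double point curve of a surface-link diagram the local algebraic picture is identical to that at a crossing of a link diagram (compare Figures~\ref{coloring6} and~\ref{doublepointshadow}, and Figures~\ref{coloring8} and~\ref{doublepointshadow2}). The plan is to define $T$ in the forward direction by reading off region colors, to define its inverse via the operation $\darrow$, and then to verify the two coloring axioms at each double point curve and each semi-sheet.

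First, given $C \in \mathrm{Col}_{(B,X)}^{\rm SB}(D)$, for every semi-sheet $s$ whose normal vector points from a region $r_1$ to a region $r_2$, set $T(C)(s) = (C(r_1), C(r_2)) \in X^2$. To see that $T(C)$ is a local biquandle coloring, consider a double point curve with under-semi-sheets $u_1, u_2$ and over-semi-sheets $o_1, o_2$ as in Figure~\ref{doublepointshadow}. Writing $x = C(r_1)$ for the region below everything, $y = C(r_1 * C(u_1))$, $z = C(r_1 * C(o_1))$, the definition of the corresponding horizontal-tribracket together with Lemma~\ref{lem:1} gives exactly $w := [x,y,z]$ as the color of the top region, so the conditions in Definition~\ref{def:localbiquandlecolor} (adapted to surface-links) reduce to the identities $(x,y)\uline{\star}(x,z) = (z,[x,y,z])$ and $(x,z)\oline{\star}(x,y) = (y,[x,y,z])$, which hold by the definition of the corresponding local biquandle.

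For the inverse, given $C' \in \mathrm{Col}_{X^2}^{\rm LB}(D)$, I first define $C$ on regions: fix a base region $r_0$ and a value $x_0 \in X$ for it, then propagate across semi-sheets using $C'$, i.e., if $C'(s) = (x,y)$ and one crosses $s$ along its normal vector from $r_1$ to $r_2$, set $C(r_1) = x$ and $C(r_2) = y$. Consistency of this propagation when two paths meet is guaranteed by the local biquandle axioms at each double point curve (this is exactly the condition encoded in Definition~\ref{def:localbiquandlecolor}), combined with the fact that $D$ is connected so that every region is reachable. For each semi-sheet $s$ with $C'(s) = (x,y)$, set $C(s) = x \darrow y$; Lemma~\ref{lem:1} then gives $C(r_1) * C(s) = x * (x \darrow y) = y = C(r_2)$, verifying the region--semi-sheet compatibility axiom. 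The biquandle conditions $C(u_2) = C(u_1)\uline{*}C(o_1)$ and $C(o_2) = C(o_1)\oline{*}C(u_1)$ at each double point curve follow from the identical computation done in the proof of Lemma~\ref{prop:translation1}, applying Lemma~\ref{lem:1} to pass between $\darrow$ and $*$.

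The main obstacle, and really the only nontrivial point beyond the link case, is verifying well-definedness of the region-label propagation in $T^{-1}$: in a surface-link diagram the complement of $D$ in $\mathbb{R}^3$ can have complicated topology, so one must check that the region label one assigns does not depend on the chosen path from $r_0$. This reduces, however, to checking that the assignment is consistent across each double point curve and each triple point; consistency across a double point curve is precisely the local biquandle axiom at that curve, and consistency at a triple point follows from the local biquandle identities together with the ($\mathcal{H}2$) axiom of the corresponding horizontal-tribracket, which we already know holds by Theorem~\ref{thm:1}. Once this well-definedness is established, the verification that $T$ and $T^{-1}$ are mutually inverse is immediate from Lemma~\ref{lem:1}, since $x \darrow (x*a) = a$ and $x*(x \darrow y) = y$.
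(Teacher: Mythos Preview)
Your approach is essentially the same as the paper's, which simply sketches the construction of $T$ and $T^{-1}$ and refers back to the link case (Lemma~\ref{prop:translation1}) for all verifications.

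One point of confusion in your write-up of $T^{-1}$: you say ``fix a base region $r_0$ and a value $x_0 \in X$ for it, then propagate.'' But there is no free choice of $x_0$ here, and no propagation is needed. Since $C'(s) = (x,y)$ already records the colors of both adjacent regions, you simply assign $C(r_1) = x$ and $C(r_2) = y$ directly for every semi-sheet $s$; this is exactly what the paper does. The well-definedness question is then not path-independence from a basepoint, but rather: if a region $r$ is adjacent to several semi-sheets, do they all prescribe the same label for $r$? This reduces to checking consistency at each double point curve, which is precisely the content of the local biquandle coloring axiom (the first components of $C'(u_1)$ and $C'(o_1)$ agree, and the conditions on $C'(u_2)$ and $C'(o_2)$ force the remaining matches). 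Triple points introduce nothing new, since the semi-sheet labels there are already determined by the double point curve conditions. Once you drop the spurious $x_0$, your argument is correct and matches the paper's.
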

\begin{proof}
Here, we show only how to construct a bijection $T$, and the details are left to the reader, refer to the proof of Lemma~\ref{prop:translation1}. 

We set a map 
$T: {\rm Col}_{(B,X)}^{\rm SB} (D) \to {\rm Col}_{X^2}^{\rm LB} (D)$ as follows:
Let $C \in {\rm Col}_{(B,X)}^{\rm SB} (D)$. 
For a semi-sheet $s$ whose normal vector points from a  region $r_1$ to a region $r_2$ as shown in the right of Figure~\ref{doublepointshadow}, we assign $(x,y)$ to the semi-sheet $s$, where $x=C(r_1)$ and $y=C(r_2)$, see also Figure~\ref{doublepoint2}.
Then the assignment determines an $X^2$-coloring $C'=T(C) \in {\rm Col}_{X^2}^{\rm LB} (D)$.

The inverse map $T^{-1} : {\rm Col}_{X^2}^{\rm LB} (D) \to {\rm Col}_{(B,X)}^{\rm SB} (D)$ is defined as follows: 
Let $C'\in {\rm Col}_{X^2}^{\rm LB} (D)$. 
For a semi-sheet $s$ whose normal vector points from a  region $r_1$ to a region $r_2$ as shown in the right of Figure~\ref{doublepointshadow}, we assign $x$ to the region $r_1$, $y$ to the region $r_2$, and $x\darrow y$ to the semi-sheet $s$, where $C'(s)=(x,y)$, see also Figure~\ref{doublepoint2}.
Then the assignment determines a $(B,X)$-coloring $C\in {\rm Col}_{(B,X)}^{\rm SB} (D)$. 

Therefore $T$ is bijective. 

\begin{figure}[ht]
  \begin{center}
    \includegraphics[clip,width=6.0cm]{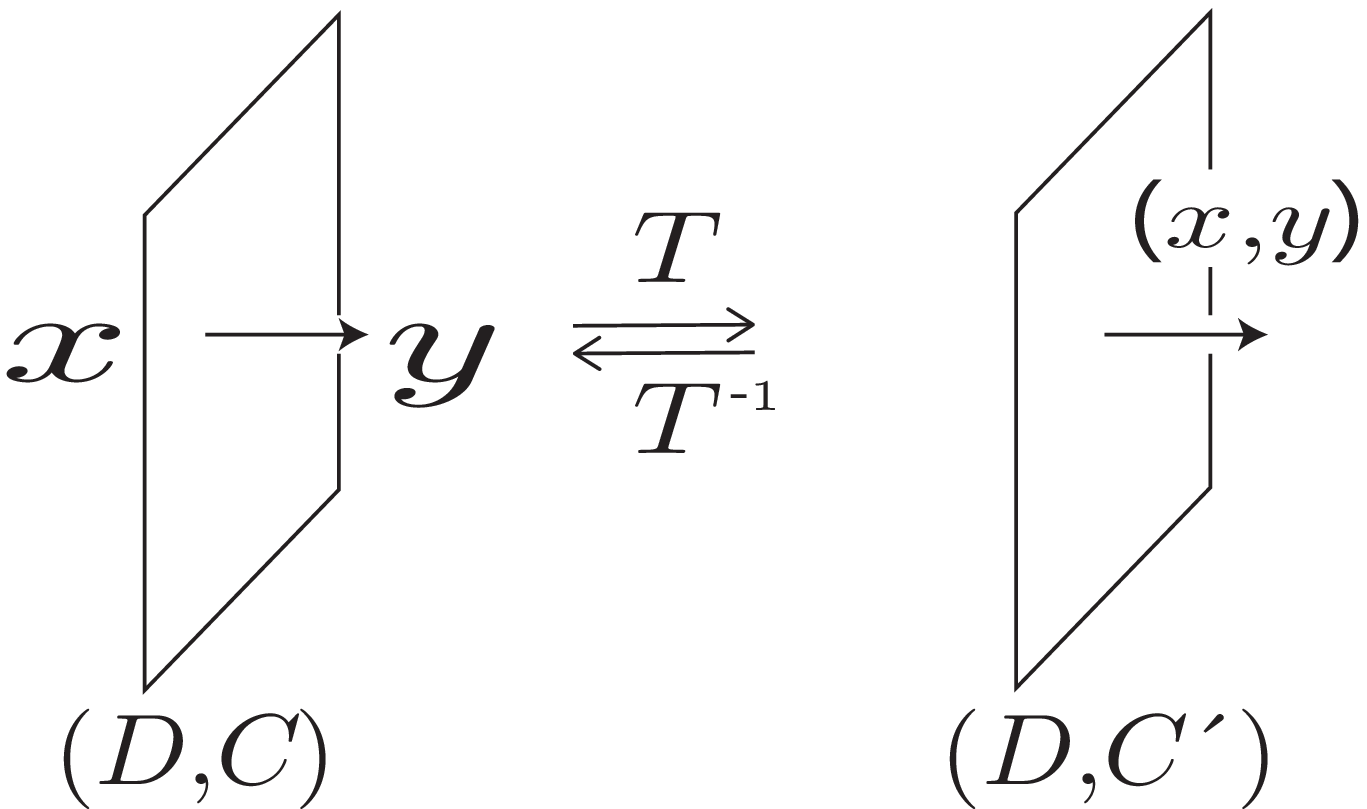}
    \caption{}
    \label{doublepoint2}
  \end{center}
\end{figure}

\end{proof}

%%%%%%%%%%%%%%%%%%%%%%%%
\noindent We continue to use the bijection $T: {\rm Col}_{(B,X)}^{\rm SB}(D) \to {\rm Col}_{X^2}^{\rm LB}(D)$. 

Let  $\mu_3:  C_3^{\rm SB} (B, X)  \to C_3^{\rm LB} (X)$ be the bijective chain map defined in Subsection~\ref{subsec:Correspondence between (co)homology groups}, that is, it is defined by 
\[
\mu_3\big((x, a_1, x_2, a_3)\big) = \big( (x, x*a_1), (x, x*a_2) , (x, x*a_3)\big).
\]
We note that the inverse map $\mu^{-1}_3 (=\eta_3 \mbox{ in Subsection~\ref{subsec:Correspondence between (co)homology groups}})$ of $\mu_3$ is defined by 
\[
\mu_3^{-1} \Big( \big((x,y), (x,z), (x,w)\big) \Big) = (x, x \darrow y, x \darrow z, x \darrow w).
\]
Let $C\in {\rm Col}_{(B,X)}^{\rm SB}(D)$ and $C'\in {\rm Col}_{X^2}^{\rm LB}(D)$ such that $T(C)=C'$.
At a triple point  $\tau$ of $D$ as depicted in Figure~\ref{triplepoint4}, we have 
\begin{align*}
w^{\rm LB}(D, C'; \tau) 
&={\rm sign}(\tau) \big((x,y), (x,z), (x,w)\big)\\
&=\mu_3 \Big({\rm sign}(\tau)\big(x, x \darrow y, x \darrow z, x \darrow w \big)\Big)\\
&=\mu_3\big(w^{\rm SB}(D, C; \tau)\big),
\end{align*}
where $C'(b_1) = (x,y)$, $C'(m_1)=(x,z)$ and $C'(t_1)=(x,w)$, see also Figure~\ref{triplepoint6}.
This implies that $W^{\rm LB}(D, C') =\mu_3\big(W^{\rm SB}(D, C)\big)$. Thus we have 
\begin{align*}
\mathcal{H}^{\rm LB}(D)&=\Big\{\big[W^{\rm LB}(D, C')\big]  ~\Big|~ \ C' \in {\rm Col}_{X^2}^{\rm LB} (D) \Big\}\\
&=\Big\{\mu_3^{\ast}\big(\big[W^{\rm SB}(D, C)\big]\big) ~\Big|~ \ C \in {\rm Col}_{(B,X)}^{\rm SB} (D) \Big\}\\
&=\mu_3^{\ast}\big(\mathcal{H}^{\rm SB}(D)\big).
\end{align*}
We note that since $\mu_3^{*}$ is an isomorphism, $\mathcal{H}^{\rm SB}(D)=(\mu_3^{\ast})^{-1}\big( \mathcal{H}^{\rm LB}(D) \big)$ holds.
This implies that as surface-link invariants, $\mathcal{H}^{\rm SB}(F)$ and $\mathcal{H}^{\rm LB}(F)$ are the same.
\begin{figure}[ht]
  \begin{center}
    \includegraphics[clip,width=9.0cm]{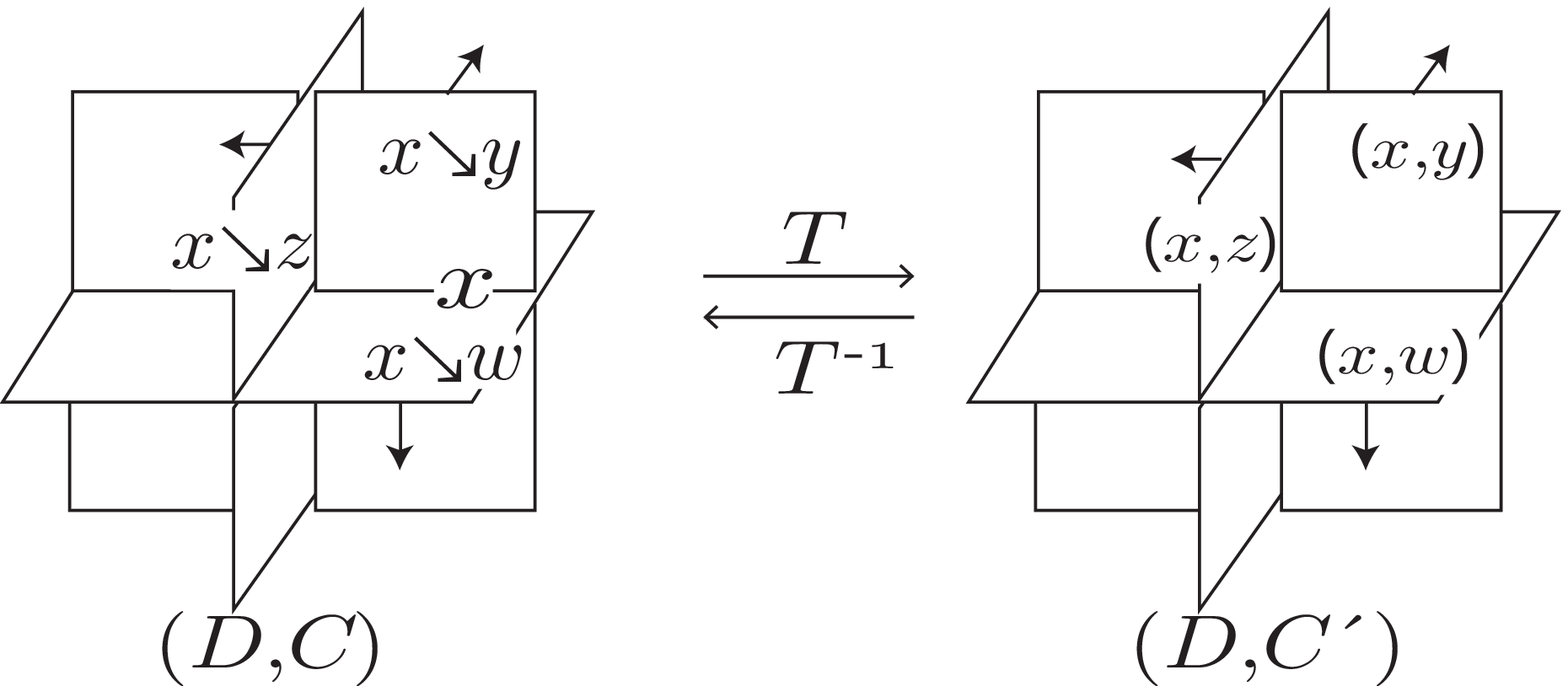}
    \caption{}
    \label{triplepoint6}
  \end{center}
\end{figure}

Let $A$ be an abelian group.
Let   $\theta \in Z^3_{\rm SB}(B, X;A)$ and $\theta' \in Z^3_{\rm LB}(X;A)$ such that $\theta = \theta' \circ \mu_3$.
We then have  
\[
\theta \big(w^{\rm SB}(D, C; \tau)\big)= \theta' \circ \mu_3 \big(w^{\rm SB}(D, C; \tau)\big)
=\theta' \big(w^{\rm LB}(D, C'; \tau)\big)
\]
for each triplepoint $\tau$, and thus,  $\theta \big(W^{\rm SB}(D, C)\big)=\theta'(W^{\rm LB}(D, C'))$ holds. 
This implies that $\Phi_{\theta}^{\rm SB}(F) =\Phi_{\theta'}^{\rm LB}(F)$. 

As a consequence, we have the following theorem:
\begin{theorem}\label{thm:4}
Let $F$ be a surface-link.
Let $(B, X)$ be a shadow biquandle such that $X$ is strongly connected, and  $(X, \{ \uline{\star}\} , \{ \oline{\star}\})$ be the corresponding local biquandle of $(B,X)$. 
Then we have 
\[
\mathcal{H}^{\rm LB}(F)=\mu_3^* \big( \mathcal{H}^{\rm SB}(F)\big) \mbox{ \ \ and \ \  } 
(\mu_3^*)^{-1}\big( \mathcal{H}^{\rm LB}(F)\big) =\mathcal{H}^{\rm SB}(F).
\]
Moreover for an abelian group $A$, let $\theta \in Z^3_{\rm SB}(B, X;A)$ and $\theta' \in Z^3_{\rm LB}(X;A)$ such that $\theta = \theta' \circ \mu_3$.
Then we have 
$$\Phi_{\theta}^{\rm SB}(F) =\Phi_{\theta'}^{\rm LB}(F).$$
\end{theorem}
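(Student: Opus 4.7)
The plan is to mimic the proof of Theorem~\ref{thm:3} in the surface-link setting, replacing crossings by triple points, semi-arcs by semi-sheets, and $\mu_2$ by $\mu_3$. The bijection $T: {\rm Col}_{(B,X)}^{\rm SB}(D) \to {\rm Col}_{X^2}^{\rm LB}(D)$ provided by the preceding lemma, together with the chain isomorphism $\mu_3$ (bijectivity given by Lemma~\ref{prop:2}) and the induced isomorphism $\mu_3^\ast$ on the third homology, should supply everything needed once we check compatibility of the local weights at each triple point.

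First I would fix a connected diagram $D$ of $F$, pick $C \in {\rm Col}_{(B,X)}^{\rm SB}(D)$, and set $C' = T(C) \in {\rm Col}_{X^2}^{\rm LB}(D)$. At a triple point $\tau$ with region $r_1$ and bottom, middle, top semi-sheets $b_1, m_1, t_1$ (as in Figure~\ref{triplepoint4}), write $C(r_1)=x,\ C(b_1)=a,\ C(m_1)=b,\ C(t_1)=c$. The inverse description of $T$ (see Figure~\ref{doublepoint2}) immediately gives $C'(b_1)=(x, x\ast a)$, $C'(m_1)=(x, x\ast b)$, $C'(t_1)=(x, x\ast c)$, and therefore
\[
\mu_3\bigl(w^{\rm SB}(D,C;\tau)\bigr) = {\rm sign}(\tau)\bigl((x,x\ast a),(x,x\ast b),(x,x\ast c)\bigr) = w^{\rm LB}(D,C';\tau).
\]
Summing over all triple points yields $W^{\rm LB}(D,C') = \mu_3\bigl(W^{\rm SB}(D,C)\bigr)$, exactly in parallel with the link case.

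With this local identity in hand, the global statements follow formally. Passing to homology classes and letting $C$ range over ${\rm Col}_{(B,X)}^{\rm SB}(D)$, the bijectivity of $T$ shows $\mathcal{H}^{\rm LB}(D) = \mu_3^\ast\bigl(\mathcal{H}^{\rm SB}(D)\bigr)$; since $\mu_3^\ast$ is an isomorphism we may apply $(\mu_3^\ast)^{-1}$ to obtain the reverse form. For the cocycle invariant, if $\theta = \theta'\circ \mu_3$ then, per triple point,
\[
\theta\bigl(w^{\rm SB}(D,C;\tau)\bigr) = \theta'\bigl(\mu_3(w^{\rm SB}(D,C;\tau))\bigr) = \theta'\bigl(w^{\rm LB}(D,C';\tau)\bigr),
\]
so summing gives $\theta(W^{\rm SB}(D,C)) = \theta'(W^{\rm LB}(D,C'))$ for each matched pair $(C,C'=T(C))$; by bijectivity of $T$, the two multisets $\Phi_\theta^{\rm SB}(F)$ and $\Phi_{\theta'}^{\rm LB}(F)$ coincide.

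The main obstacle is not conceptual but bookkeeping: one must ensure that the four-tuple of colors used to define $w^{\rm SB}(D,C;\tau)$ at a triple point really does pass to the three-tuple of pairs used to define $w^{\rm LB}(D,C';\tau)$ under the formula for $\mu_3$, with the same sign convention. Since the translation $T$ was constructed precisely via the relation $C(s) = C(r_1)\darrow C(r_2)$ compatible with $x\ast a$ and its inverse (Lemma~\ref{lem:1}), this is an entirely mechanical verification identical in spirit to the link case handled above; the argument is therefore essentially automatic once one references the earlier work, and indeed the proof can be left to the reader aside from recording the key identity $W^{\rm LB}(D,C') = \mu_3(W^{\rm SB}(D,C))$.
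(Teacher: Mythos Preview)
Your proposal is correct and follows essentially the same approach as the paper: establish $w^{\rm LB}(D,C';\tau)=\mu_3\bigl(w^{\rm SB}(D,C;\tau)\bigr)$ at each triple point via the bijection $T$, sum to get $W^{\rm LB}(D,C')=\mu_3\bigl(W^{\rm SB}(D,C)\bigr)$, and then read off both the homology and cocycle statements using bijectivity of $T$ and of $\mu_3$. The only cosmetic difference is that the paper phrases the triple-point computation starting from the $C'$-labels $(x,y),(x,z),(x,w)$ and applying $\mu_3^{-1}$, whereas you start from the $C$-labels $(x,a,b,c)$ and apply $\mu_3$; the two are equivalent.
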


%%%%%%%%%%

%%%%%%%%%%%%%%%%%%%%%
%%%%%%%%%%%%%%%%%%%%%
%%%%%%%%%%%%%%%%%%%%%
%%%%%%%%%%%%%%%%%%%%%
%%%%%%%%%%%%%%%%%%%%%
%%%%%%%%%%%%%%%%%%%%%
%%%%%%%%%%%%%%%%%%%%%
%%%%%%%%%%%%%%%%%%%%%

\section{Remarks}

\subsection{Shadow biquandle theory and Niebrzydowski's theory}

In \cite{Niebrzydowski0, Niebrzydowski1, Niebrzydowski2}, region colorings of link diagrams by using algebraic structures called {\it knot-theoretic ternary quasigroups} were studied and used to define invariants of links and surface-links. Furthermore, Niebrzydowski in \cite{Niebrzydowski1, Niebrzydowski2} introduced a (co)homology theory of the algebraic structures, and defined a cocycle invariant for links and surface-links. 
In this subsection, we denote by $H_n^{\rm N} (X;A)$ and $H^n_N(X;A)$ the $n$th Niebrzydowski's homology group and cohomology group, respectively, for a given knot-theoretic horizontal-ternary-quasigroup $(X, [\,])$ and an abelian group $A$. 
Note that several versions of Niebrzydowski's (co)homology groups  were defined in \cite{Niebrzydowski1, Niebrzydowski2}, and in this subsection, his (co)homology groups mean the (co)homology groups reviewed in \cite{NOO}.
In addition, we denote by $\mathcal{H}^{\rm N}(L)$ and $\Phi^{\rm N}_\theta(L)$ the link invariants for a link $L$ using the homology group $H_1^{\rm N} (X)$ and a $1$-cocycle $\theta$  of his homology theory, respectively.
We denote by $\mathcal{H}^{\rm N}(F)$ and $\Phi^{\rm N}_\theta(F)$ the surface-link invariants for a surface-link $F$ using the homology group $H_2^{\rm N} (X)$ and a $2$-cocycle $\theta$ of his homology theory, respectively, see  \cite{NOO} for details.

In \cite{NOO}, we introduced local biquandle theory to show that the Niebrzydowski's (co)homology theory can be
interpreted as local biquandle (co)homology theory. On other words, the Niebrzydowski's (co)homology theory can be interpreted similarly as biquandle (co)homology theory since local biquandle (co)homology theory is an analogy of biquandle (co)homology theory.
Moreover through an isomorphism between two cohomology groups, we showed that Niebrzydowski's cocycle invariants and local biquandle cocycle invariants are the same.

Considering the main results shown in Section~\ref{sec:main} in this paper together with the results shown in \cite{NOO}, we have the following corollaries:
\begin{corollary}\label{cor:1}
Let $(B, X)$ be a shadow biquandle such that $X$ is strongly connected, 
$[\,] :X^3\to X$ the corresponding horizontal-tribracket of $(B, X)$, and $(X, \{ \uline{\star}\} , \{ \oline{\star}\})$ the corresponding local biquandle of $(B,X)$.
Let $A$ be an abelian group.
Then for any $n\in \mathbb Z$,  we have 
\[
H_n^{\rm SB} (B, X; A)  \cong H_n^{\rm LB} (X; A) \cong H_{n-1}^{\rm N} (X;A)
\]
and 
\[
H^n_{\rm SB} (B, X; A)  \cong H^n_{\rm LB} (X; A) \cong H^{n-1}_{\rm N} (X;A).
\]
\end{corollary}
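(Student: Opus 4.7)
The plan is to obtain Corollary~\ref{cor:1} by chaining together two isomorphisms: the first one between shadow biquandle (co)homology and local biquandle (co)homology, which is the content of Theorem~\ref{thm:2} proved earlier in this paper, and the second one between local biquandle (co)homology and Niebrzydowski's (co)homology, which is one of the main results established in \cite{NOO}. Since both are already available, there is essentially nothing new to compute; the corollary is a formal assembly.

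More precisely, I would first invoke Theorem~\ref{thm:2} applied to $(B,X)$ and its corresponding local biquandle $(X,\{\uline{\star}\},\{\oline{\star}\})$, which yields
\[
H_n^{\rm SB}(B,X;A)\cong H_n^{\rm LB}(X;A)\quad\text{and}\quad H^n_{\rm SB}(B,X;A)\cong H^n_{\rm LB}(X;A)
\]
for every $n\in\mathbb Z$, via the bijective chain map $\mu_n$ constructed in Lemma~\ref{prop:2}. I would then cite the corresponding result from \cite{NOO}, which states that for any knot-theoretic horizontal-ternary-quasigroup $(X,[\,])$ and its associated local biquandle, there are isomorphisms
\[
H_n^{\rm LB}(X;A)\cong H_{n-1}^{\rm N}(X;A)\quad\text{and}\quad H^n_{\rm LB}(X;A)\cong H^{n-1}_{\rm N}(X;A),
\]
where the dimension shift by one arises from the fact that local biquandle colorings live on semi-arcs (or semi-sheets) while Niebrzydowski's colorings live on regions, so at a crossing the local biquandle weight is a $2$-chain whereas the Niebrzydowski weight is a $1$-chain, and analogously for triple points.

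Composing the two isomorphisms gives the desired conclusion. There is no obstacle here, since both ingredients have been established: Theorem~\ref{thm:2} was proved via the explicit chain isomorphism $\mu_n$ using the strong connectedness of $X$ (through the map $x\darrow$), and the identification with Niebrzydowski's theory is imported verbatim from \cite{NOO}. The only point worth care is making sure the index shift and the distinction between the ``horizontal'' version of Niebrzydowski's (co)homology used in \cite{NOO} match those referenced in the statement, which the preceding discussion of the corollary has already fixed.
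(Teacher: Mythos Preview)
Your proposal is correct and matches the paper's approach exactly: the corollary is stated without proof in the paper, as it follows immediately by combining Theorem~\ref{thm:2} with the isomorphism $H_n^{\rm LB}(X;A)\cong H_{n-1}^{\rm N}(X;A)$ from \cite{NOO}. There is nothing to add.
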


\begin{corollary}\label{cor:2}
Let $L$ be a link.
Let $(B, X)$ be a shadow biquandle such that $X$ is strongly connected, 
$[\,] :X^3\to X$ the corresponding horizontal-tribracket of $(B, X)$, and $(X, \{ \uline{\star}\} , \{ \oline{\star}\})$ the corresponding local biquandle of $(B,X)$.
Then $$\mathcal{H}^{\rm SB}(L), ~~\mathcal{H}^{\rm LB}(L) ~~ \mbox{ and } ~~\mathcal{H}^{\rm N}(L)$$ are the same as link invariants.  
Moreover for an abelian group $A$, let $\theta \in Z^2_{\rm SB}(B, X;A)$, $\theta' \in Z^2_{\rm LB}(X;A)$ and $\bar{\theta} \in Z^1_{\rm N}(X;A)$ such that $\theta = \theta' \circ \mu_2$ and $\theta' = \bar\theta \circ \varphi_2$, where $Z^1_{\rm N}(X;A)$ is the first cocycle group of the Niebrzydowski's (co)homology theory and $\varphi_2$ is the bijective chain map defined in \cite{NOO}.
Then we have 
$$\Phi_{\theta}^{\rm SB}(L) =\Phi_{\theta'}^{\rm LB}(L)=\Phi_{\bar\theta}^{\rm N}(L).$$
\end{corollary}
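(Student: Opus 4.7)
The plan is to chain together two equivalences: the shadow biquandle / local biquandle correspondence already established in this paper (Theorem~\ref{thm:3}), and the local biquandle / Niebrzydowski correspondence proved in \cite{NOO}. Since each link invariant in the statement is attached to one of the three theories, and since the maps $\mu_2$ and $\varphi_2$ between the chain complexes are the natural objects that translate colorings and weights across the three settings, the corollary should follow by composing these two translations.

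First I would invoke Theorem~\ref{thm:3} directly: since $X$ is strongly connected and $(X,[\,])$ is the corresponding horizontal-tribracket with $(X,\{\uline{\star}\},\{\oline{\star}\})$ its associated local biquandle, we obtain the bijection $T:{\rm Col}_{(B,X)}^{\rm SB}(D) \to {\rm Col}_{X^2}^{\rm LB}(D)$ compatible with weights through $\mu_2$, giving $\mathcal{H}^{\rm LB}(L)=\mu_2^\ast(\mathcal{H}^{\rm SB}(L))$ and $\Phi_{\theta}^{\rm SB}(L)=\Phi_{\theta'}^{\rm LB}(L)$ whenever $\theta=\theta'\circ\mu_2$. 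Since $\mu_2^\ast$ is an isomorphism, the multisets $\mathcal{H}^{\rm SB}(L)$ and $\mathcal{H}^{\rm LB}(L)$ carry the same information as link invariants.

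Next I would quote the main theorem of \cite{NOO}, which provides a bijection between $X^2$-colorings of $D$ by the local biquandle and region colorings of $D$ by the knot-theoretic horizontal-ternary-quasigroup $(X,[\,])$, together with a bijective chain map $\varphi_2:C_2^{\rm LB}(X)\to C_1^{\rm N}(X)$ which sends the local biquandle weight at each crossing to the Niebrzydowski weight. This yields $\mathcal{H}^{\rm LB}(L)=\varphi_2^\ast(\mathcal{H}^{\rm N}(L))$ as multisets of homology classes, and hence $\mathcal{H}^{\rm N}(L)$ agrees with $\mathcal{H}^{\rm LB}(L)$ as a link invariant; moreover, for $\theta'=\bar\theta\circ\varphi_2$ the cocycle weights at each crossing of $(D,C')$ and $(D,C'')$ (with $C''$ the Niebrzydowski coloring corresponding to $C'$) coincide, so $\Phi_{\theta'}^{\rm LB}(L)=\Phi_{\bar\theta}^{\rm N}(L)$.

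Finally I would combine the two steps: stacking the bijections, the shadow biquandle coloring $C$ corresponds to the local biquandle coloring $C'=T(C)$ and then to the Niebrzydowski coloring, with weights related by $\bar\theta\circ\varphi_2\circ\mu_2 = \theta'\circ\mu_2 = \theta$, yielding the chain of equalities $\Phi_{\theta}^{\rm SB}(L)=\Phi_{\theta'}^{\rm LB}(L)=\Phi_{\bar\theta}^{\rm N}(L)$. No substantial obstacle is expected; the only point that needs care is verifying that the hypothesis $\theta=\theta'\circ\mu_2$ together with $\theta'=\bar\theta\circ\varphi_2$ is consistent at the level of cocycles, i.e.\ that $\varphi_2\circ\mu_2$ descends through the degenerate submodules and that $\bar\theta$ is indeed a $1$-cocycle in the Niebrzydowski complex whenever $\theta$ is a $2$-cocycle. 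Both facts follow from Lemma~\ref{prop:2} of this paper and from the corresponding properties of $\varphi_2$ recorded in \cite{NOO}, so the proof reduces to assembling these ingredients.
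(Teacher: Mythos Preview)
Your proposal is correct and matches the paper's approach exactly: the paper does not give a separate proof of this corollary but simply states that it follows by combining Theorem~\ref{thm:3} with the corresponding results from \cite{NOO}, which is precisely the two-step composition you describe. The only minor slip is a directional typo in the homology-class identity (with $\varphi_2:C_2^{\rm LB}(X)\to C_1^{\rm N}(X)$ one obtains $\mathcal{H}^{\rm N}(L)=\varphi_2^\ast(\mathcal{H}^{\rm LB}(L))$ rather than the reverse), but this does not affect the argument.
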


\begin{corollary}\label{cor:3}
Let $F$ be a surface-link.
Let $(B, X)$ be a shadow biquandle such that $X$ is strongly connected, 
$[\,] :X^3\to X$ the corresponding horizontal-tribracket of $(B, X)$, and $(X, \{ \uline{\star}\} , \{ \oline{\star}\})$ the corresponding local biquandle of $(B,X)$.
Then $$\mathcal{H}^{\rm SB}(F), ~~\mathcal{H}^{\rm LB}(F) ~~ \mbox{ and } ~~\mathcal{H}^{\rm N}(F)$$ are the same as link invariants.  
Moreover for an abelian group $A$, let $\theta \in Z^3_{\rm SB}(B, X;A)$, $\theta' \in Z^3_{\rm LB}(X;A)$ and $\bar{\theta} \in Z^2_{\rm N}(X;A)$ such that $\theta = \theta' \circ \mu_3$ and $\theta' = \bar\theta \circ \varphi_3$, where $Z^2_{\rm N}(X;A)$ is the second cocycle group of the Niebrzydowski's (co)homology theory and $\varphi_3$ is the bijective chain map defined in \cite{NOO}.
Then we have 
$$\Phi_{\theta}^{\rm SB}(F) =\Phi_{\theta'}^{\rm LB}(F)=\Phi_{\bar\theta}^{\rm N}(F).$$
\end{corollary}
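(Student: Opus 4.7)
The plan is to chain Theorem~\ref{thm:4} of the present paper with the analogous surface-link theorem established in \cite{NOO}, which identifies local biquandle cocycle invariants with Niebrzydowski's cocycle invariants. Both building blocks are already in hand, so the work is to compose them and check that the bookkeeping of chain maps and coloring bijections lines up.

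First, I would apply Theorem~\ref{thm:4} directly. Since $X$ is strongly connected, the bijective chain map $\mu_3 : C_3^{\rm SB}(B,X) \to C_3^{\rm LB}(X)$ of Lemma~\ref{prop:2} induces an isomorphism $\mu_3^{*}$ on homology, and the coloring bijection $T : {\rm Col}_{(B,X)}^{\rm SB}(D) \to {\rm Col}_{X^2}^{\rm LB}(D)$ satisfies $W^{\rm LB}(D, T(C)) = \mu_3(W^{\rm SB}(D,C))$ at every triple point. Hence $\mathcal{H}^{\rm LB}(F) = \mu_3^{*}(\mathcal{H}^{\rm SB}(F))$ as multisets of homology classes, and $\Phi_{\theta}^{\rm SB}(F) = \Phi_{\theta'}^{\rm LB}(F)$ whenever $\theta = \theta' \circ \mu_3$.

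Next, I would invoke the corresponding surface-link result from \cite{NOO}, applied to the local biquandle $(X, \{\uline{\star}\}, \{\oline{\star}\})$ associated with the horizontal-tribracket $(X,[\,])$. That result provides the bijective chain map $\varphi_3$ (and the associated isomorphism $\varphi_3^{*}$ on homology) together with a bijection between local biquandle $X^2$-colorings and Niebrzydowski's region colorings of a connected surface-link diagram, compatible with the triple-point weight assignments. It yields $\mathcal{H}^{\rm N}(F) = \varphi_3^{*}(\mathcal{H}^{\rm LB}(F))$ and $\Phi_{\theta'}^{\rm LB}(F) = \Phi_{\bar\theta}^{\rm N}(F)$ under $\theta' = \bar\theta \circ \varphi_3$.

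Composing the two identifications gives $\mathcal{H}^{\rm N}(F) = (\varphi_3 \circ \mu_3)^{*}(\mathcal{H}^{\rm SB}(F))$ and the three-way equality $\Phi_{\theta}^{\rm SB}(F) = \Phi_{\theta'}^{\rm LB}(F) = \Phi_{\bar\theta}^{\rm N}(F)$, proving the corollary. The main (and only real) obstacle is verifying that the conventions in Theorem~\ref{thm:4} and in the surface-link theorem of \cite{NOO} match: the ordering of bottom, middle, and top semi-sheets at a triple point, the sign conventions, and the use of $x \darrow y$ versus the raw region colors. Since both theorems are formulated with the same Roseman-move conventions and the same triple-point weight recipe illustrated in Figures~\ref{triplepoint4}, \ref{triplepoint5} and \ref{triplepoint6}, the composition $\varphi_3 \circ \mu_3$ is a bona fide bijective chain map from $C_3^{\rm SB}(B,X)$ to the appropriate Niebrzydowski chain module, and the chain runs through without further computation.
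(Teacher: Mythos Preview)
Your proposal is correct and matches the paper's approach exactly: the corollary is stated without a separate proof, being obtained by combining Theorem~\ref{thm:4} with the corresponding surface-link identification of local biquandle and Niebrzydowski invariants from \cite{NOO}. Your composition via $\mu_3$ and $\varphi_3$ is precisely the intended argument.
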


%%%%%%%%%%%%%%%%%%%%%%%%%
%%%%%%%%%%%%%%%%%%%%%%%%%
%%%%%%%%%%%%%%%%%%%%%%%%%
%%%%%%%%%%%%%%%%%%%%%
%%%%%%%%%%%%%%%%%%%%%%

\subsection{Examples and Mochizuki's cocycles}\label{subsec:Mochizuki}

\begin{example}\label{ex:Mochizuki}
For a positive integer $n$ and an ideal $J$ of $\mathbb Z_n [t^{\pm 1}]$, let $(Q,X, \uline{*},*)$ is the shadow quandle, with
$Q=X=\mathbb Z_n [t^{\pm 1}]/J$ and $a\uline{*} b =a*b =ta+(1-t)b~(\forall a,b \in \mathbb Z_n [t^{\pm 1}]/J)$, defined in Example~\ref{ex:2}. Suppose that $1-t$ is a unit in $\mathbb Z_n [t^{\pm 1}]/J$.
Since $X$ is strongly connected and we have 
$$x \darrow y = (1-t)^{-1}(-tx+y) ~~~(x,y \in X), $$
it holds that 
\[
[x,y,z]= y*\big((x\darrow z) \oline{*} (x\darrow y) \big)= y*(x\darrow z) = y*\big((1-t)^{-1}(-tx+z)\big)= -tx + ty + z
\]
by Theorem~\ref{thm:1}, where we note that $a\oline{*} b = a ~(\forall a,b \in Q)$.
Thus Example~\ref{ex:4} is related to Example~\ref{ex:2}.
\end{example}

\begin{example}
Let $n$ be an odd prime number. 
Let $(R_n, X, \uline{*}, *)$ be the shadow quandle, with 
$R_n= X = \mathbb Z_n$ and $a\uline{*}b = a*b = 2b-a ~(\forall a,b \in \mathbb Z_n)$, defined in Example~\ref{ex:1}.
Since $X$ is strongly conneced and we have 
\[
x\darrow y = \frac{x+y}{2} ~~~(x,y\in X), 
\]
it holds that 
\[
[x,y,z]= y*\big((x\darrow z) \oline{*} (x\darrow y) \big)= y*(x\darrow z) = y*\Big(\frac{x+z}{2}\Big)= x-y+z
\]
by Theorem~\ref{thm:1}, where we note that $a\oline{*} b = a ~(\forall a,b \in R_n)$.
Thus Example~\ref{ex:3} is related to Example~\ref{ex:1}.

The shadow (bi)quandle $2$-cocycle $\theta_n: C_2^{\rm SB}(R_n, X) \to \mathbb Z_n$ defined by 
\[
\displaystyle \theta_n\big((x,y,z)\big) = (x-y) \frac{(2z-y)^n+y^n-2z^n}{n}
\] 
is called a {\it Mochizuki's cocycle}. Let $\theta_n^{\rm SB}=\theta_n$. The corresponding local biquandle $2$-cocycle $\theta_n^{\rm LB}$, i.e. $\theta^{\rm LB}_n \in Z^2_{\rm LB}(X;\mathbb Z_n)$ such that $\theta_n^{\rm SB} = \theta_n^{\rm LB} \circ \mu_2$, is defined by 
\begin{align*}
\theta_n^{\rm LB}\Big(\big((x,y), (x,z)\big)\Big) &= \theta_n^{\rm SB} \big( (x,x\darrow y, x \darrow z)\big) \\
&=\theta_n^{\rm SB} \Big( \big( x,\frac{x+y}{2}, \frac{x+z}{2} \big)\Big)\\
&=\Big(x-\frac{x+y}{2}\Big) \frac{ \Big(2 \frac{x+z}{2} - \frac{x+y}{2}\Big)^n +\Big( \frac{x+y}{2}\Big)^n-2 \Big( \frac{x+z}{2}\Big)^n  }{n}\\
&=\Big(\frac{x-y}{2}\Big) \frac{ \Big(\frac{x-y+2z}{2}\Big)^n +\Big( \frac{x+y}{2}\Big)^n-2 \Big( \frac{x+z}{2}\Big)^n   }{n}\\
&=2^{-2}(x-y) \frac{  (x-y+2z)^n +( x+y)^n- 2( x+z)^n   }{n}
\end{align*}
since $2^{-n}= 2^{-1}$ in $\mathbb Z_n$ by Fermat's little theorem, where the numerator is calculated in $\mathbb Z$ and it is divisible by $n$. We may define $\theta_n^{\rm LB}$ by 
\begin{align*}
\theta_n^{\rm LB}\Big(\big((x,y), (x,z)\big)\Big) &=(x-y) \frac{  (x-y+2z)^n +( x+y)^n- 2( x+z)^n   }{n}.
\end{align*}
Furthermore, the corresponding Niebrzydowski's $1$-cocycle $\theta^{\rm N}_n$, i.e. $\theta^{\rm N}_n \in Z^1_{\rm N}(X;\mathbb Z_n)$ such that $\theta^{\rm LB}_n = \theta^{\rm N}_n \circ \varphi_2$, is defined by 
\begin{align*}
\theta^{\rm N}_n\big((x, y, z)\big) &= \theta^{\rm LB}_n \Big(\big( (x, y), (x, \langle x,y,z \rangle )\big) \Big)\\
&= \theta^{\rm LB}_n \Big(\big( (x, y), (x, -x+y+z )\big) \Big) \\
&=2^{-2}(x-y) \frac{  \big(x-y+2(-x+y+z)\big)^n +\big( x+y\big)^n- 2\big( x+(-x+y+z)\big)^n   }{n}\\
&=2^{-2}(x-y) \frac{  (-x+y+2z)^n +( x+y)^n- 2( y+z)^n   }{n},
\end{align*}
where $\varphi_2$ is the bijective chain map defined in \cite{NOO}, $\langle\,  \rangle $ is the corresponding vertical-tribracket defined in \cite{NOO} and $\langle x,y,z \rangle= -x+y+z$ for this case, and where the numerator is calculated in $\mathbb Z$ and it is divisible by $n$. We may define $\theta^{\rm N}_n$ by 
\begin{align*}
\theta^{\rm N}_n\Big(\big(x, y, z\big)\Big) &=(x-y) \frac{  (-x+y+2z)^n +( x+y)^n- 2( y+z)^n   }{n}.
\end{align*}

%%%%%%%%%%%%%%%%%%%%%%
%%%%%%%%%%%%%%%%%%%%%%

The Mochizuki's cocycle $\theta_n$ induces a shadow biquadle $3$-cocycle $\theta_n^{\rm SB}$ by 
\[
\theta_n^{\rm SB}\big((x, y, z, w)\big) = \theta_n(y,z,w).
\]
The corresponding local biquandle $3$-cocycle $\theta_n^{\rm LB}$, i.e. $\theta^{\rm LB}_n \in Z^3_{\rm LB}(X;\mathbb Z_n)$ such that $\theta_n^{\rm SB} = \theta_n^{\rm LB} \circ \mu_3$, is defined by 
\begin{align*}
\theta_n^{\rm LB}\Big(\big((x,y), (x,z), (x,w)\big)\Big) &= \theta_n^{\rm SB} \big( (x,x\darrow y, x \darrow z, x \darrow w)\big) \\
&=\theta_n^{\rm SB} \Big( \big( x,\frac{x+y}{2}, \frac{x+z}{2} , \frac{x+w}{2} \big)\Big)\\
&=\theta_n \Big( \big( \frac{x+y}{2}, \frac{x+z}{2} , \frac{x+w}{2} \big)\Big)\\
&=2^{-2}(y-z) \frac{  (x-z+2w)^n +( x+z)^n- 2( x+w)^n   }{n},
\end{align*}
where the numerator is calculated in $\mathbb Z$ and it is divisible by $n$. We may define $\theta_n^{\rm LB}$ by 
\begin{align*}
\theta_n^{\rm LB}\Big(\big((x,y), (x,z), (z,w)\big)\Big) &=(y-z) \frac{  (x-z+2w)^n +( x+z)^n- 2( x+w)^n   }{n}.
\end{align*}
Furthermore, the corresponding Niebrzydowski's $2$-cocycle $\theta^{\rm N}_n$, i.e. $\theta^{\rm N}_n \in Z^2_{\rm N}(X;\mathbb Z_n)$ such that $\theta^{\rm LB}_n = \theta^{\rm N}_n \circ \varphi_3$, is defined by 
\begin{align*}
\theta^{\rm N}_n\big((x, y, z, w)\big) &= \theta^{\rm LB}_n \Big(\big( (x, y), (x, \langle x,y,z \rangle ), (x, \langle x,y,\langle  y,z,w \rangle \rangle ) \big) \Big)\\
&= \theta^{\rm LB}_n \Big(\big( (x, y), (x, -x+y+z ), (x, -x+z+w )\big) \Big) \\
&=2^{-2}(x-z) \frac{  (-y+z+2w)^n +( y+z)^n- 2( z+w)^n   }{n},
\end{align*}
where $\varphi_3$ is the bijective chain map defined in \cite{NOO},  $\langle\,  \rangle $ is the corresponding vertical-tribracket defined in \cite{NOO} and $\langle x,y,z \rangle= -x+y+z$ for this case, and where the numerator is calculated in $\mathbb Z$ and it is divisible by $n$. We may define $\theta^{\rm N}_n$ by 
\begin{align*}
\theta^{\rm N}_n\Big(\big(x, y, z, w\big)\Big) &=(x-z) \frac{  (-y+z+2w)^n +( y+z)^n- 2( z+w)^n   }{n}.
\end{align*}

\end{example}

\section*{Acknowledgments}

The author wishes to express her thanks to Natsumi Oyamaguchi for several helpful comments.

The author was supported by JSPS KAKENHI Grant Number 16K17600.

%%%%%%%%%%%%%%%%%%%%%
%%%%%%%%%%%%%%%%%%%%%%
%%%%%%%%%%%%%%%%%%%%%
%%%%%%%%%%%%%%%%%%%%%%
%%%%%%%%%%%%%%%%%%%%%
%%%%%%%%%%%%%%%%%%%%%%

\end{document}